 \newtheorem{theorem}{Theorem}
	\newtheorem{example}[theorem]{Example}
	\newtheorem{corollary}[theorem]{Corollary}
	\newtheorem{lemma}[theorem]{Lemma}
	\newtheorem{remark}[theorem]{Remark}
\newcommand{\ie}{i.e., }
\newcommand{\calO}{\mathcal{O}}
\newcommand{\E}{\mathbb{E}}
\newcommand{\var}{\mathrm{Var}}
\DeclareMathOperator{\tr}{\mathrm{tr}}
\DeclareMathOperator{\diag}{\mathrm{diag}}
\newcommand{\KV}{\hat \theta_{2p}}
\newcommand{\X}{\hat \Theta_{2p}}
\newcommand{\Y}{Y_{i_1,\ldots,j_p}^{t_1,\ldots,s_p}}
\newcommand{\rev}[1]{\textcolor{black}{#1}}
\title{Improved bounds for randomized Schatten norm estimation of numerically low-rank matrices}
\author{Ya-Chi Chu\footnote{Corresponding author} \footnote{Department of Mathematics, Stanford University, CA, USA. Email: \texttt{ycchu97@stanford.edu}} \and Alice Cortinovis\footnote{Department of Mathematics, Stanford University, CA, USA. Email: \texttt{alicecrvs@gmail.com}}}
\date{}
\begin{document}

\maketitle

\begin{abstract}
In this work, we analyze the variance of a stochastic estimator for computing Schatten norms of matrices. The estimator extracts information from a single sketch of the matrix, that is, the product of the matrix with a few standard Gaussian random vectors. While this estimator has been proposed and used in the literature before, the existing variance bounds are often pessimistic. 
\rev{Our work provides a new upper bound and estimates of the variance of this estimator. These theoretical findings are supported by numerical experiments, demonstrating that the new bounds are significantly tighter than the existing ones in the case of numerically low-rank matrices.}
\end{abstract}

\paragraph{Keywords.} Schatten norm, randomized estimator, variance bound, trace estimator, numerically low-rank matrices, spectrum moments estimation

\paragraph{MSC code.} 65F35, 62J10, 68W20

\section{Introduction}

Given a matrix $A \in \mathbb{R}^{m \times n}$, we consider the problem of numerically approximating its Schatten-$2p$ norm
\begin{equation*}
    \|A\|_{2p} = \left ( \sigma_1^{2p} + \sigma_2^{2p} + \cdots + \sigma_{\min\{n,m\}}^{2p} \right )^{\frac{1}{2p}},
\end{equation*}
where $\sigma_1, \sigma_2, \ldots, \sigma_{\min\{n,m\}}$ are the singular values of $A$ and $p$ is a positive integer. While the Schatten-$2p$ norm can be readily computed from the singular values of the matrix, computing the singular values themselves generally requires $\mathcal O(nm \min\{n,m\})$ time and is infeasible for large-scale matrices. 

Several randomized methods have been developed to compute an \emph{approximation} of the Schatten norm with cheaper operations, trading some accuracy for speed. A class of such methods is based on the Hutchinson trace estimator~\cite{Hutchinson1989}, a randomized algorithm that approximates the trace of a square matrix $B \in \mathbb{R}^{n \times n}$ by averaging a few quadratic forms $\omega_i^{T} B \omega_i$, where $\omega_i$'s are isotropic random vectors of length $n$. The Hutchinson estimator is often used when $B$ is a matrix function~\cite{Cortinovis2022,Ubaru2017} and relates to the Schatten-$2p$ norm estimation problem via the formula
\begin{equation}\label{eq:hutch}
    \|A\|_{2p}^{2p} = \mathrm{trace}\left ((A^TA)^p\right ).
\end{equation}
A variety of variance reduction techniques has been developed, for instance the combination of Hutchinson trace estimator with randomized low-rank approximation; see, e.g.,~\cite{Epperly2024,Meyer2021,Persson2022}. Algorithms specifically tailored to Schatten-$p$ norm estimation have been discussed in~\cite{Dudley2022,Han2017}.
When computing quadratic forms involving $(A^TA)^{p}$, \rev{at least $p$ matrix-vector products (matvecs) with $A$ are required and must be applied sequentially.}
This number can be slightly reduced if \rev{the quadratic forms themselves }are approximated with Chebyshev polynomials as in~\cite{Dudley2022,Han2017}.
\rev{Another line of works \cite{braverman2018matrix,braverman2020schatten} assumes that the rows of $A$ are available one at a time -- this is known as the \emph{row-order model}. Under this setting, a Schatten-$2p$ norm estimator can be computed after one or multiple passes over the matrix $A$.}

\rev{The \emph{sketching model} assumes access to a \emph{sketch} of the matrix $A$, that is, a matrix $Y := A\Omega$ for some random matrix $\Omega \in \mathbb{R}^{n \times k}$, but not necessarily assumes the access to the matvec with $A$ and to the entries of $A$.
This setting applies, for instance, to a problem considered in~\cite{kong2017spectrum}: 
approximating the set of the eigenvalues $\sigma_1^2, \ldots, \sigma_n^2$ of an unknown covariance matrix $\Sigma = A^T A$ given access to $k \ll n$ samples drawn from a multivariate distribution corresponding to $\Sigma$. This problem can be reduced, via the methods of moments \cite{bai2010estimation,li2014local,rao2008statistical}, to estimate the $p$-th moment of the spectrum of the covariance matrix $\Sigma$, defined as $\sum_{i=1}^n \sigma_i^{2p} = \| A \|_{2p}^{2p}$.
In our work, we focus on the case where the $k$ samples are drawn from a \emph{multivariate Gaussian distribution}. Such samples correspond to a \emph{Gaussian sketch} $Y := A \Omega$ where $\Omega \in \mathbb{R}^{n \times k}$ consists of i.i.d. standard Gaussian entries. In the language of randomized numerical linear algebra, the problem from~\cite{kong2017spectrum} can be reformulated as estimating the Schatten-$2p$ norms of a matrix $A$ given a Gaussian sketch of $A$. In case the matrix $A$ is known, but very large, the sketching model can also be advantageous since it allows for parallelization of the matrix-vector multiplications with $A$.}

\rev{The trace estimation techniques mentioned above are not applicable under the sketching model, so other strategies need to be employed. Li, Nguyen, and Woodruff~\cite{li2014sketching} showed that it is possible to extract an unbiased estimate of $\|A\|_{2p}^{2p}$ from a sketch of $A$. Kong and Valiant~\cite{kong2017spectrum} proposed a computationally efficient way to compute a similar unbiased estimator, which is the focus of our work.}
\rev{In Algorithm~\ref{alg:KV} we summarize the procedure for constructing the estimator for $\|A\|_{2p}^{2p}$ proposed in~\cite{kong2017spectrum}, using a Gaussian sketch. The estimator is denoted by $\KV$.}
In Algorithm~\ref{alg:KV}, line 2 uses the MATLAB function notation \texttt{triu($\cdot$,1)} to indicate the strictly upper triangular part of a matrix. \rev{Note that we assume $p \le k$, otherwise $T^{p-1}=0$ and the estimator would be identically zero.}
\begin{algorithm}
\caption{Estimator for Schatten-$2p$ norm from \cite{kong2017spectrum}}
\label{alg:KV}
\begin{algorithmic}[1] 
\REQUIRE{\rev{Gaussian s}ketch $Y = A \Omega \in \mathbb{C}^{m \times k}$ and \rev{positive} integer \rev{$p \le k$}}
\ENSURE{Unbiased estimator $\KV$ for $\|A\|_{2p}^{2p}$} 

\vspace{0.5pc}

\STATE Compute $Z = Y^T Y$.

\STATE Define $T = \texttt{triu}(Z,1)$ as the strictly upper triangular part of the matrix $Z$.

\STATE Compute $\KV = \binom{k}{p}^{-1} \mathrm{trace}(T^{p-1} Z)$
\end{algorithmic}
\end{algorithm}

It was proved in~\cite[Proposition 4]{kong2017spectrum} that the variance of $\KV$ \rev{generated by Algorithm~\ref{alg:KV}} %with a Gaussian sketching matrix $\Omega$ 
satisfies the following bound:
\begin{equation}\label{eq:varKV-original}
    \mathrm{Var}(\KV) \le 2^{12p} p^{6p}3^p \max \left \{ \frac{n^{p-2}}{k^p}, \frac{1}{k}, \frac{n^{\frac{1}{2}-\frac{1}{p}}}{k} \right \} \|A\|_{2p}^{4p}.
\end{equation}
\rev{Lower bounds on Schatten norm estimation~\cite{li2014sketching} imply that the dependence on $n$ and $k$ featured in~\eqref{eq:varKV-original} cannot be improved in the worst case. However, the bound~\eqref{eq:varKV-original}} severely overestimates the true variance of $\KV$ \rev{for many matrices in practice}. As an illustrative example, Figure~\ref{fig:firstexample} plots the variance for $\hat{\theta}_{6}$ (estimator of the $6$-th power of the Schatten-$6$ norm) on a $10 \times 10$ matrix with singular values $1, 1/2, 1/3, \ldots, 1/10$, together with the bound~\eqref{eq:varKV-original}. \rev{Note that the bound~\eqref{eq:varKV-original}} overestimates the variance by several orders of magnitude. 

\begin{figure}[htb]
    \centering
    \includegraphics[scale=.3]{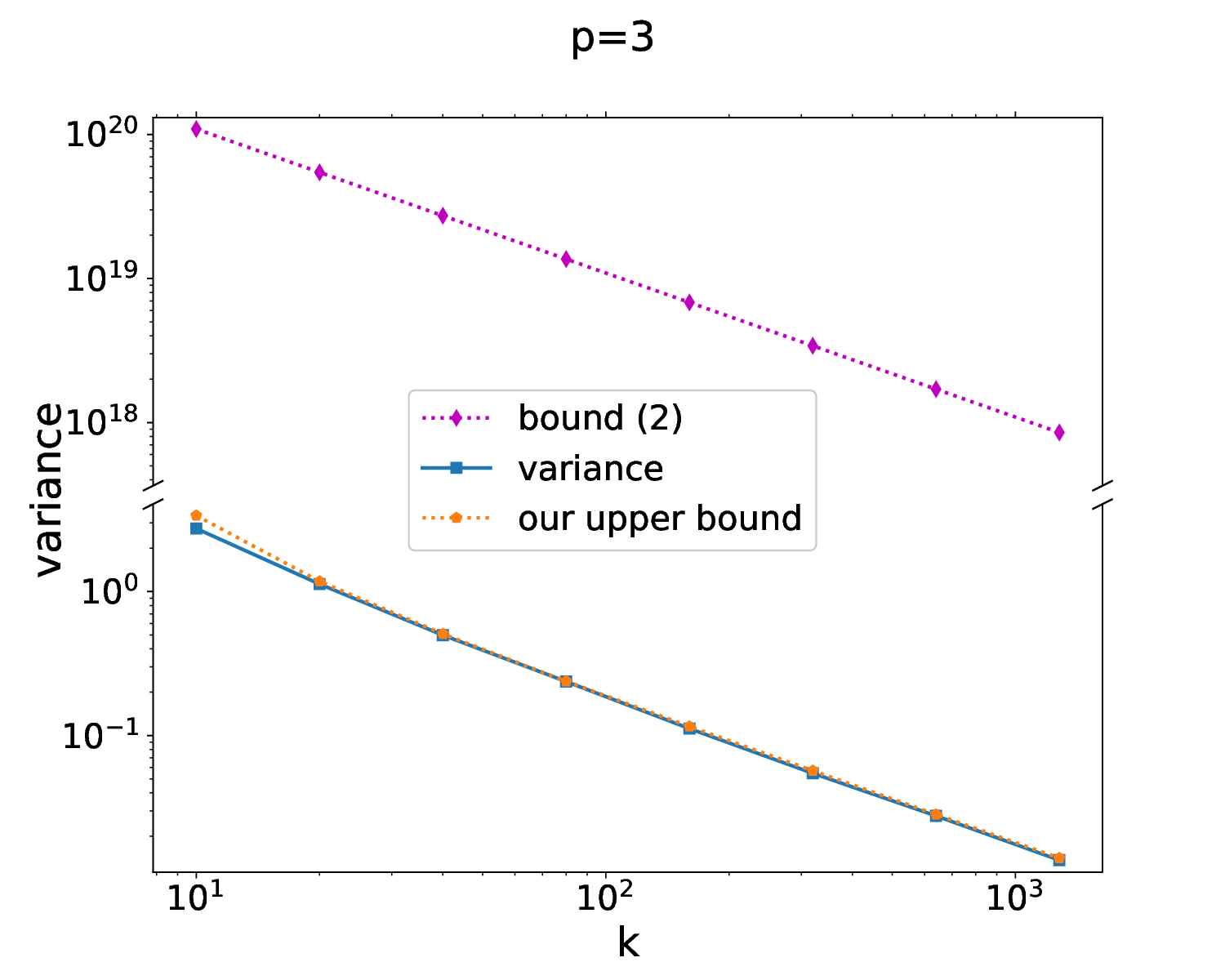}
    \caption{Comparison between \rev{the} true variance of $\hat{\theta}_{6}$\rev{,} the variance bound \eqref{eq:varKV-original}\rev{, and our bound from Theorem~\ref{thm:secondorderbound}}  for a $10 \times 10$ matrix $A$. \rev{Note that the $y$-axis has been ``cut'' for improved readability.}}
    \label{fig:firstexample}
\end{figure}

\paragraph{Contributions.} In this paper, we are interested in better understanding the variance of $\KV$ and its behavior on matrices with strongly decaying singular values (\ie numerically low-rank matrices). 
We derive \rev{a bound on the variance of $\KV$ that improves~\eqref{eq:varKV-original} by a large margin for all the matrices we have considered; see Figure~\ref{fig:firstexample} above for an illustrative example. While the large constants in the bound~\eqref{eq:varKV-original} were due, in part, to the fact that the analysis was done in a more general case, in which the entries of $\Omega$ could be any centered random variable with unit variance and bounded fourth moment, our bounds instead leverage the rotational invariance of the Gaussian distribution to simplify the analysis of the estimator. The results we obtain do not depend explicitly on the size of the matrix, and they only} depend on the singular values of $A$. 
Numerical results show that \rev{our} bound is relatively tight for matrices with rapid singular value decay using moderately large sketch size $k$. Moreover, in all \rev{the numerical experiments} our variance bound is \rev{much} more precise than the previous bound~\eqref{eq:varKV-original}. We provide a first-order and second-order expansion of the variance of $\KV$ in $1/k$. \rev{In particular, we} will show in \Cref{thm:firstorder} that, with a Gaussian sketching matrix $\Omega$, the variance of $\KV$ has first-order expansion
\begin{equation} \label{eq:varKV-ours}
\var(\KV) = \frac{2p^2}{k} \|A\|_{4p}^{4p} + \calO \left( \frac{1}{k^2} \right).
\end{equation} The experiments verify that our first-order and second-order estimates capture the behavior of $\KV$ for matrices that exhibit a strong singular value decay, for moderate values of $k$.

\paragraph{Outline.} Section~\ref{sec:KV} reviews the building blocks of $\KV$. Section~\ref{sec:estimates} presents our main results on the variance of $\KV$. In Section~\ref{sec:examples}, we illustrate the performance of our bounds and estimates on a variety of examples with or without singular value decay. Our conclusions are summarized in Section~\ref{sec:conclusion}.

\section{Notations and review of estimator $\KV$}~\label{sec:KV}
This short section defines the notations throughout the paper and introduces a mathematically equivalent expression of $\KV$, which will be useful in our analysis later.
Given a positive integer $k$, we denote $[k] = \{1, 2, \ldots, k\}$. 
Given another positive integer $p \leq k$, a \emph{$p$-cycle} is a sequence of $p$ distinct integers $\tau = (i_1, i_2, \ldots, i_p)\in [k]^p$.
When a matrix $Z \in \mathbb{R}^{k \times k}$ is further given, every $p$-cycle $\tau$ defines a product 
\begin{equation*}
    Z_{\tau} = \prod_{\ell=1}^{\rev{p}} Z_{i_\ell, i_{\ell+1}},
\end{equation*}
with the convention that $i_{\rev{p}+1} \equiv i_1$ for simpler notation. With this notation, we are ready to state one key idea towards the construction of $\KV$.

\begin{lemma}[{\cite{li2014sketching} and~\cite[Fact 2]{kong2017spectrum}}]\label{lem:unbiasedness}
For any $p$-cycle $\tau = (i_1, \ldots, i_{p})$ with $i_\ell \in [k]$, a real matrix $A \in \mathbb{R}^{m \times n}$, and an $n \times k$ random matrix $\Omega = (\omega_{j,\ell})$ where $\omega_{j, \ell}$ are i.i.d. entries with mean $0$ and variance $1$, 
\begin{equation*} %\label{eq:unbiased}
\E[(\Omega^T A^T A \Omega)_{\tau}] = \mathrm{trace}((A^TA)^p) = \|A\|_{2p}^{2p},
\end{equation*}
where the expectation is over the randomness of $\Omega$.
\end{lemma}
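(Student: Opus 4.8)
The plan is to expand everything into scalar sums indexed by integer tuples and then use the i.i.d.\ structure of $\Omega$ to isolate the terms that survive taking the expectation. First I would set $B := A^T A$, a symmetric positive semidefinite matrix, and write $\omega_{:,\ell}$ for the $\ell$-th column of $\Omega$, so that the $(i,j)$ entry of $Z := \Omega^T A^T A \Omega$ is $Z_{i,j} = \sum_{a,b=1}^n \omega_{a,i}\, B_{a,b}\, \omega_{b,j}$. Substituting this into the definition of $Z_{\tau}$ and multiplying out yields
\begin{equation*}
Z_{\tau} \;=\; \prod_{\ell=1}^{p} Z_{i_\ell, i_{\ell+1}} \;=\; \sum_{a_1,b_1,\ldots,a_p,b_p=1}^{n} \Bigl( \prod_{\ell=1}^{p} B_{a_\ell, b_\ell} \Bigr)\Bigl( \prod_{\ell=1}^{p} \omega_{a_\ell, i_\ell}\, \omega_{b_\ell, i_{\ell+1}} \Bigr),
\end{equation*}
with the cyclic convention $i_{p+1} \equiv i_1$.

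Next I would compute the expectation of the $\Omega$-dependent product by grouping its $2p$ factors according to which \emph{column} of $\Omega$ each one involves. Because $\tau$ is a $p$-cycle, the column indices $i_1,\ldots,i_p$ are distinct, so the factors referencing column $i_\ell$ are exactly $\omega_{a_\ell, i_\ell}$ (from the $\ell$-th term) and $\omega_{b_{\ell-1}, i_\ell}$ (from the $(\ell-1)$-st term, with indices taken modulo $p$ so that $b_0 := b_p$). Independence of distinct columns then factorizes the expectation as $\prod_{\ell=1}^{p} \E\bigl[\omega_{a_\ell, i_\ell}\, \omega_{b_{\ell-1}, i_\ell}\bigr]$, and independence within a single column, together with the mean-zero and unit-variance hypotheses, gives $\E\bigl[\omega_{a_\ell, i_\ell}\, \omega_{b_{\ell-1}, i_\ell}\bigr] = 1$ when $a_\ell = b_{\ell-1}$ and $0$ otherwise. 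Consequently the only tuples that contribute are those with $b_{\ell-1} = a_\ell$ for every $\ell$, that is $b_\ell = a_{\ell+1}$ for every $\ell$ (cyclically).

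Imposing these constraints leaves $a_1, \ldots, a_p$ as the only free indices, and the surviving sum becomes
\begin{equation*}
\E[Z_{\tau}] \;=\; \sum_{a_1,\ldots,a_p=1}^{n} B_{a_1,a_2}\, B_{a_2,a_3}\cdots B_{a_p,a_1} \;=\; \mathrm{trace}(B^p) \;=\; \mathrm{trace}\bigl((A^TA)^p\bigr),
\end{equation*}
which equals $\sum_i \sigma_i^{2p} = \|A\|_{2p}^{2p}$ by the spectral decomposition of $A^T A$. The main obstacle is purely combinatorial bookkeeping in the middle step: for each column index $i_\ell$ one must correctly pair up the two $\omega$-factors referencing it --- one carrying an ``$a$''-type row index and one a ``$b$''-type row index --- and verify that every cross term vanishes. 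Once the pairing $b_\ell = a_{\ell+1}$ is identified, no inequalities or higher moments are needed; the remaining sum is exactly the coordinate expansion of $\mathrm{trace}(B^p)$.
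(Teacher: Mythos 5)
Your proof is correct, and it takes a genuinely different route from the paper's. The paper first substitutes the SVD of $A$ and invokes the rotational invariance of the standard Gaussian distribution to reduce to the case where $A^TA$ is diagonal; each factor $Z_{i_h,i_{h+1}}$ then carries a single row index $\delta_h$, and the parity argument forces $\delta_h = \delta_{h+1}$, giving $\sum_t \sigma_t^{2p}$ directly. You instead keep $B = A^TA$ in full generality, carry two row indices $(a_\ell, b_\ell)$ per factor, and exploit only the mutual independence of the entries of $\Omega$: grouping the $2p$ factors by the (distinct) columns $i_1,\ldots,i_p$ factorizes the expectation into $\prod_\ell \E[\omega_{a_\ell,i_\ell}\omega_{b_{\ell-1},i_\ell}]$, and the mean-zero/unit-variance hypotheses force the pairing $b_{\ell-1}=a_\ell$, leaving exactly the coordinate expansion of $\mathrm{trace}(B^p)$. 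The trade-off: your argument requires slightly more index bookkeeping but uses only the hypotheses actually stated in the lemma (i.i.d., mean $0$, variance $1$), so it covers non-Gaussian sketches; the paper's proof is shorter in bookkeeping but its reduction step is specific to Gaussian $\Omega$, which is narrower than the lemma as stated (though consistent with the references~\cite{li2014sketching,kong2017spectrum} and with the Gaussian setting used everywhere else in the paper).
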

\rev{\begin{proof}
By substituting the matrix $A$ with its singular value decomposition and leveraging the rotational invariance of the standard Gaussian distribution, we can assume without loss of generality that $A$ is diagonal and thus $A^T A = \diag(\sigma_1^2, \ldots, \sigma_n^2)$.
Let $\delta = (\delta_1, \ldots, \delta_p)\in [n]^p$. Then
\begin{equation*}
\left(\Omega^T A^T A \Omega\right)_\tau
= \prod_{h = 1}^p \left(\Omega^T A^T A \Omega\right)_{i_h, i_{h+1}}
= \prod_{h = 1}^p \left ( \sum_{t = 1}^n \sigma_t^2 \omega_{t, i_h} \omega_{t, i_{h+1}} \right )
= \sum_{\delta \in [n]^p} \prod_{h=1}^p \sigma_{\delta_h}^2 \omega_{\delta_h, i_h} \omega_{\delta_h, i_{h+1}}.
\end{equation*}
Since $\omega_{i,j}$'s are i.i.d. random variables with mean $0$ and variance $1$, the expectation of the product $\prod_{h=1}^p \sigma_{\delta_h}^2 \omega_{\delta_h, i_h} \omega_{\delta_h, i_{h+1}}$ vanishes unless every entry of $\Omega$ appears an even number of times in the product.
Since $i_1, \ldots, i_{p}$ are distinct, each non-zero term must have each $\omega_{\bullet, i_h}$ appear exactly twice and thus $\delta_h = \delta_{h+1}$. Therefore, the expectation of $\left(\Omega^T A^T A \Omega\right)_\tau$ becomes
\begin{equation*}
\E[(\Omega^T A^T A \Omega)_{\tau}] 
= \sum_{\delta_1 = 1}^n \prod_{h=1}^p \left( \E [\omega_{\delta_1, i_h}^2] \sigma_{\delta_1}^2 \right)
= \sum_{\delta_1 = 1}^n \sigma_{\delta_1}^{2p}
= \tr((A^T A)^p).\qedhere
\end{equation*}
\end{proof}}
Setting $Y = A \Omega$, \Cref{lem:unbiasedness} says that $(Y^T Y)_\tau$ is an unbiased estimate of $\|A\|_{2p}^{2p}$ for any (fixed) $p$-cycle $\tau$. A natural approach to reduce the variance of this estimator is to average over all possible $p$-cycles. However, this strategy poses significant computational challenges, as naively iterating over all $\frac{k!}{(k-p)!}$ possible cycles becomes computationally infeasible, even for moderately large values of $k$. A surprising linear algebra fact is that the average over all \emph{increasing} $p$-cycles, that is, $p$-cycles such that $i_1 < i_{\rev{2}}< \ldots < i_p$, can be efficiently computed by simple linear algebra operations \cite{kong2017spectrum}, and this defines $\KV$ presented in \Cref{alg:KV}:
\begin{equation} \label{eq:kv-def}
\binom{k}{p}^{-1} \sum_{1 \leq i_1 < \cdots < i_p \leq k} (\Omega^T A^T A \Omega)_{\tau} = \binom{k}{p}^{-1} \mathrm{trace}(T^{p-1}Z) \rev{= \KV},
\end{equation}
where $Z $ and $T$ are defined as in \Cref{alg:KV}. The proof for \rev{the first }identity in \eqref{eq:kv-def} can be found in \cite[Lemma 1]{kong2017spectrum}. We note that \Cref{lem:unbiasedness} together with the relation \eqref{eq:kv-def} guarantee the unbiasedness of $\KV$.

\section{Improved variance estimates for $\KV$}\label{sec:estimates}

This section presents our main results regarding the variance of $\KV$ and the analysis.
All the theorems are stated in \Cref{sec:main} while all the proofs are deferred to \Cref{sec:proofs}.

\subsection{Main results}\label{sec:main}
Our first result provides the first order expansion in $1/k$ for the variance of $\KV$ where $k$ is the sketch size. 

\begin{theorem}\label{thm:firstorder}
Let $\KV$ be the unbiased estimator for $\|A\|_{2p}^{2p}$ given in \Cref{alg:KV}. Then the variance of $\KV$ is
\begin{equation*}
    \mathrm{Var}(\KV) = \frac{2p^2 \|A\|_{4p}^{4p}}{k} + \mathcal O\left ( \frac{1}{k^2} \right ),
\end{equation*}
\rev{where} the constant in $\calO(\cdot)$ \rev{depends on $p$ and }is a symmetric function of the singular values of $A$.
\end{theorem}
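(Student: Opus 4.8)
The plan is to compute $\E[\KV^2]$, expand it in powers of $1/k$ up to the $\calO(1/k)$ term, and subtract $(\E[\KV])^2 = \|A\|_{2p}^{4p}$. As in the proof of \Cref{lem:unbiasedness}, rotational invariance of the Gaussian sketch lets me assume $A = \diag(\sigma_1,\dots,\sigma_n)$, so that $Z := \Omega^T A^T A \Omega$ has entries $Z_{a,b} = \sum_{t=1}^n \sigma_t^2 \omega_{t,a}\omega_{t,b}$. By \eqref{eq:kv-def}, $\KV = \binom{k}{p}^{-1}\sum_{S} Z_{\tau(S)}$ is the average over the increasing $p$-cycles $\tau(S)$ indexed by $p$-subsets $S\subseteq[k]$, hence
\begin{equation*}
\E[\KV^2] = \binom{k}{p}^{-2}\sum_{S,S'} \E[Z_{\tau(S)} Z_{\tau(S')}].
\end{equation*}
I would split the double sum according to the overlap $r := |S\cap S'|\in\{0,1,\dots,p\}$; the number of ordered pairs $(S,S')$ with a given $r$ is $\binom{k}{p}\binom{p}{r}\binom{k-p}{p-r}$, and these sum (Vandermonde) to $\binom{k}{p}^2$ as they must.

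The heart of the argument is evaluating $\E[Z_{\tau(S)} Z_{\tau(S')}]$. For $r=0$ the two cycles use disjoint columns of $\Omega$, hence independent Gaussians, so the expectation factors as $(\E[Z_{\tau(S)}])^2 = \|A\|_{2p}^{4p}$ by \Cref{lem:unbiasedness}. For $r=1$, I would expand both products into monomials in the $\omega$'s and apply Wick's formula: the only admissible pairings are those matching the two Gaussians sitting on each non-shared column (which forces, within each cycle, all the ``row'' summation indices to collapse to a single value $t$, resp.\ $s$), together with one of the three ways of pairing the four Gaussians on the shared column. The first such pairing imposes no extra constraint and contributes $\sum_{t,s}\sigma_t^{2p}\sigma_s^{2p} = \|A\|_{2p}^{4p}$; the other two force $t=s$ and each contributes $\sum_t \sigma_t^{4p} = \|A\|_{4p}^{4p}$. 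Hence, for \emph{every} $r=1$ pair (irrespective of where the shared index sits in the two cycles, and including the case $p=1$ in which $r=1$ means $\tau(S)=\tau(S')$),
\begin{equation*}
\E[Z_{\tau(S)} Z_{\tau(S')}] = \|A\|_{2p}^{4p} + 2\|A\|_{4p}^{4p}.
\end{equation*}
For $r\ge 2$ I would not compute the expectation, only bound it: Cauchy--Schwarz gives $|\E[Z_{\tau(S)}Z_{\tau(S')}]|\le \E[Z_{(1,\dots,p)}^2]=:C_p$, a fixed symmetric function of $\sigma_1,\dots,\sigma_n$ (all increasing $p$-cycles being equivalent under relabeling of $[k]$), and the number of pairs with $r\ge 2$ equals $\binom{k}{p}^2 - \binom{k}{p}\binom{k-p}{p} - p\binom{k}{p}\binom{k-p}{p-1} = \calO(k^{2p-2})$, so these pairs contribute only $\calO(1/k^2)\,C_p$ to $\E[\KV^2]$.

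Assembling the pieces,
\begin{equation*}
\E[\KV^2] = \frac{\binom{k-p}{p}}{\binom{k}{p}}\|A\|_{2p}^{4p} + \frac{p\binom{k-p}{p-1}}{\binom{k}{p}}\left(\|A\|_{2p}^{4p} + 2\|A\|_{4p}^{4p}\right) + \calO\!\left(\frac{1}{k^2}\right),
\end{equation*}
and the elementary expansions $\binom{k-p}{p}/\binom{k}{p} = \prod_{j=0}^{p-1}\!\left(1-\tfrac{p}{k-j}\right) = 1-\tfrac{p^2}{k}+\calO(1/k^2)$ and $p\binom{k-p}{p-1}/\binom{k}{p} = \tfrac{p^2}{k}+\calO(1/k^2)$ make the two $\|A\|_{2p}^{4p}$ contributions cancel the mean-square term $\|A\|_{2p}^{4p}$ up to $\calO(1/k^2)$, leaving $\var(\KV) = \tfrac{2p^2}{k}\|A\|_{4p}^{4p}+\calO(1/k^2)$. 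Every quantity appearing ($\|A\|_{2p}^{4p}$, $\|A\|_{4p}^{4p}$, $C_p$, and the Taylor coefficients, which depend only on $p$) is a symmetric function of the singular values, so the remainder is as claimed. I expect the main obstacle to be the $r=1$ moment computation: one has to organize the Wick/monomial bookkeeping carefully enough to see that, regardless of the positions of the shared index in the two increasing cycles, all internal row indices collapse and the answer is \emph{exactly} $\|A\|_{2p}^{4p} + 2\|A\|_{4p}^{4p}$, with no hidden $k$- or $n$-dependence. Once this identity and the crude Cauchy--Schwarz bound for $r\ge 2$ are established, the rest reduces to routine binomial asymptotics.
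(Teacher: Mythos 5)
Your proposal is correct and follows essentially the same route as the paper: reduce to a diagonal $A$ by rotational invariance, decompose $\E[\hat\Theta_{2p}^2]$ by the overlap size $r$ between the two increasing $p$-cycles, compute the $r=0$ and $r=1$ contributions exactly (your Wick-pairing count on the shared column reproduces the paper's case split $t_1\neq s_1$ versus $t_1=s_1$ and yields the same value $\|A\|_{2p}^{4p}+2\|A\|_{4p}^{4p}$), and absorb the $r\ge 2$ terms into $\calO(1/k^2)$ via a $k$-independent bound on each expectation times an $\calO(k^{2p-2})$ count. The only cosmetic differences are your equivalent form $\binom{k}{p}\binom{p}{r}\binom{k-p}{p-r}$ of the paper's count $\binom{k}{2p-r}\binom{2p-r}{p}\binom{p}{r}$ and your explicit Cauchy--Schwarz constant $C_p$ where the paper simply observes that $\E[f_{i_1,\ldots,j_p}]$ does not depend on $k$.
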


The next theorem provides an upper bound for the variance of 
\begin{equation}\label{eq:Theta}
\rev{\X \coloneqq \binom{k}{p} \KV.}
\end{equation}
The bound is a function of $k$, $p$, and Schatten norms of $A$, highlighting lower variance for numerically low-rank matrices.

\begin{theorem}\label{thm:secondorderbound}
\rev{The variance of $\X$} is bounded by 
\begin{align*}
\var(\X) & \le - \binom{k}{p}^2\|A\|_{2p}^{4p} + \binom{k}{2p} \binom{2p}{p} \|A\|_{2p}^{4p}
+ \binom{k}{2p-1}\binom{2p-1}{p} p \left ( \|A\|_{2p}^{4p} + 2 \|A\|_{4p}^{4p} \right ) \\
&\quad + \binom{k}{2p-2}\binom{2p-2}{p}\binom{p}{2} \left ( 6 \|A\|_{4p}^{4p} + 3 \|A\|_4^4\|A\|_{4p-4}^{4p-4}\right ) \\
&\quad + \sum_{r=3}^p \binom{k}{2p-r}\binom{2p-r}{p}\binom{p}{r} \sum_{\ell = 1}^r c(r,\ell) \|A\|_4^{4(\ell-1)} \|A\|_{4p-4(\ell-1)}^{4p-4(\ell-1)},
\end{align*}
where
\begin{equation*}
    c(r,\ell) = \begin{cases}
        3^r & \text{if } \ell=1;\\
        3^{r-2}(2^{r+1}-1) & \text{if } \ell = 2;\\
        3^{r-\ell} \ell^r &\text{otherwise.}
    \end{cases}
\end{equation*}
\end{theorem}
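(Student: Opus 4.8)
The plan is to expand the variance $\var(\X) = \E[\X^2] - (\E[\X])^2$ directly using the representation $\X = \sum_{1 \le i_1 < \cdots < i_p \le k} (\Omega^T A^T A \Omega)_\tau$ from \eqref{eq:kv-def}, where the sum runs over increasing $p$-cycles. We know $\E[\X] = \binom{k}{p}\|A\|_{2p}^{2p}$ by Lemma~\ref{lem:unbiasedness}, which immediately produces the first term $-\binom{k}{p}^2\|A\|_{2p}^{4p}$. For the second moment, I would write
\[
\E[\X^2] = \sum_{\tau, \tau'} \E\!\left[(\Omega^T A^T A \Omega)_\tau (\Omega^T A^T A \Omega)_{\tau'}\right],
\]
where $\tau, \tau'$ range over increasing $p$-cycles, and then classify the pairs $(\tau, \tau')$ according to $r = |\,\mathrm{supp}(\tau) \cap \mathrm{supp}(\tau')\,|$, the number of shared indices. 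The key structural observation is that by the rotational invariance of the Gaussian (as in the proof of Lemma~\ref{lem:unbiasedness}), one may assume $A^T A = \diag(\sigma_1^2, \ldots, \sigma_n^2)$, and then each $\E[(\Omega^T A^T A \Omega)_\tau (\Omega^T A^T A \Omega)_{\tau'}]$ becomes a sum over multi-indices $\delta, \delta' \in [n]^p$ of products of $\sigma$'s times an expectation of a monomial in the i.i.d. Gaussian entries $\omega_{j,\ell}$; such an expectation is nonzero only when every entry appears an even number of times, which forces strong constraints linking $\delta$, $\delta'$ to the combinatorial pattern of how $\tau$ and $\tau'$ overlap.

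Next I would count, for each fixed overlap size $r$, how many pairs of increasing $p$-cycles have exactly $r$ common indices: choosing the $2p - r$ total indices used gives $\binom{k}{2p-r}$, then distributing them gives a $\binom{2p-r}{p}$ factor for which go to $\tau$, and a $\binom{p}{r}$ factor for which of $\tau$'s positions are the shared ones — this explains the leading combinatorial coefficients $\binom{k}{2p-r}\binom{2p-r}{p}\binom{p}{r}$ appearing in each term. For each such pair one must bound the contribution $\E[(\Omega^T A^T A \Omega)_\tau (\Omega^T A^T A \Omega)_{\tau'}]$ in terms of Schatten norms; here the shared indices force the two cycles to ``glue'' along arcs, and each maximal shared arc of combined length contributes a power of $\sigma$ that, after summing over $[n]$, yields a factor like $\|A\|_4^4$ or $\|A\|_{4p-4(\ell-1)}^{4p-4(\ell-1)}$ — the index $\ell$ tracking the number of connected pieces of the overlap. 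The Gaussian moment $\E[\omega^{2m}] = (2m-1)!!$ and the number of pairings of the repeated $\omega$'s produce the constants $c(r,\ell)$, and the special small cases $r = 0, 1, 2$ are singled out because their overlap patterns are simple enough to evaluate exactly (e.g. $r=0$ gives exactly $\binom{k}{2p}\binom{2p}{p}\|A\|_{2p}^{4p}$ by independence, matching the $\tau, \tau'$ disjoint case).

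The main obstacle I anticipate is the careful bookkeeping for $r \ge 3$: when $\tau$ and $\tau'$ share $r$ indices, those indices split into some number $\ell \le r$ of ``runs'' along the cycles, and one must (i) enumerate the ways the shared indices can be interleaved in the two increasing cycles, (ii) track how the forced equalities among the $\delta, \delta'$ multi-indices propagate along each run and at its endpoints, and (iii) correctly count the Gaussian pairings of the doubled entries to extract the worst-case constant $c(r,\ell) = 3^{r-\ell}\ell^r$ (with the $\ell = 1, 2$ corrections coming from slightly different pairing structures when there are very few runs). Establishing a clean upper bound that is uniform over all interleaving patterns — rather than an exact formula — is what makes the argument tractable, and I would organize this as a sequence of lemmas: one reducing to the diagonal case, one on the combinatorics of overlapping increasing cycles, one bounding a single overlap-$r$ term by $\sum_\ell c(r,\ell)\|A\|_4^{4(\ell-1)}\|A\|_{4p-4(\ell-1)}^{4p-4(\ell-1)}$, and finally assembling the pieces with the counting coefficients.
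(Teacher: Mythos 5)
Your plan coincides with the paper's proof: reduce to diagonal $A$ by rotational invariance, expand $\E[\X^2]$ as a double sum over pairs of increasing $p$-tuples, classify the pairs by the number $r$ of shared indices (with exactly the counting coefficient $\binom{k}{2p-r}\binom{2p-r}{p}\binom{p}{r}$), handle $r=0,1$ exactly and $r=2$ almost exactly, and bound the $r\ge 3$ contributions via the arc structure forced by the even-moment condition together with a Schatten-norm product inequality that pushes every term to the worst-case form $\|A\|_4^{4(\ell-1)}\|A\|_{4p-4(\ell-1)}^{4p-4(\ell-1)}$. One small correction to your bookkeeping: in the paper $\ell$ indexes the number of \emph{distinct singular values} assigned to the $r$ arcs of a given pair (which is why the per-pair bound is a sum over $\ell=1,\dots,r$), not the number of connected runs of the overlap, which is a fixed combinatorial property of the pair and could not be summed over.
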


Dividing the bound for $\var(\hat \Theta_{2p})$ in \Cref{thm:secondorderbound} by $\binom{k}{p}^2$ immediately provides an upper bound for $\var(\KV)$.  The second-order estimate for the variance of $\KV$ presented in \Cref{cor:secondorder} below follows from \Cref{thm:secondorderbound}.

\begin{corollary}\label{cor:secondorder}
The variance of $\KV$ satisfies
\begin{equation*}
\var(\KV) \le \frac{2p^2\|A\|_{4p}^{4p}}{k} + \frac{p^2 (p-1)^2}{k^2} \left (\|A\|_{4p}^{4p} + \frac{3}{2} \|A\|_4^4\|A\|_{4p-4}^{4p-4} - \frac{1}{2}\|A\|_{2p}^{4p} \right ) + \mathcal O \left ( \frac{1}{k^3} \right ),
\end{equation*}
\rev{where the constant in $\calO(\cdot)$ depends on $p$ and is a symmetric function of the singular values of $A$.}
\end{corollary}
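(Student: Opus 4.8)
The plan is to obtain the second-order estimate for $\var(\KV)$ by dividing the upper bound for $\var(\X)$ from Theorem~\ref{thm:secondorderbound} by $\binom{k}{p}^2$, and then expanding each term asymptotically in $1/k$, keeping terms up to order $1/k^2$. Concretely, since $\X = \binom{k}{p}\KV$, we have $\var(\KV) = \binom{k}{p}^{-2}\var(\X)$, so every summand in the bound of Theorem~\ref{thm:secondorderbound} must be multiplied by $\binom{k}{p}^{-2}$ and its leading behavior in $k$ extracted.

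The key computational step is a careful asymptotic analysis of the ratios of binomial coefficients. For fixed $p$ and $r$, one has $\binom{k}{2p-r}\binom{k}{p}^{-2} = \Theta(k^{-r})$ as $k \to \infty$; more precisely, writing $\binom{k}{j} = \frac{k^j}{j!}(1 + \calO(1/k))$, one finds
\begin{equation*}
\frac{\binom{k}{2p-r}}{\binom{k}{p}^2} = \frac{(p!)^2}{(2p-r)!}\,k^{-r}\left(1 + \calO\!\left(\tfrac{1}{k}\right)\right).
\end{equation*}
Thus the $r=0$ term (the first two summands, $-\binom{k}{p}^2\|A\|_{2p}^{4p} + \binom{k}{2p}\binom{2p}{p}\|A\|_{2p}^{4p}$) must be handled together: after dividing by $\binom{k}{p}^2$ it equals $\left(\binom{k}{2p}\binom{2p}{p}\binom{k}{p}^{-2} - 1\right)\|A\|_{2p}^{4p}$, and one needs the expansion of $\binom{k}{2p}\binom{2p}{p}\binom{k}{p}^{-2}$ to second order in $1/k$ to see that the $\calO(1)$ parts cancel and the leading surviving term is $\calO(1/k^2)$ — this requires the next-order correction in the binomial ratio, not just the leading power. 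The $r=1$ term contributes at order $1/k$: dividing $\binom{k}{2p-1}\binom{2p-1}{p}p(\|A\|_{2p}^{4p} + 2\|A\|_{4p}^{4p})$ by $\binom{k}{p}^2$ gives, to leading order, $\frac{(p!)^2}{(2p-1)!}\binom{2p-1}{p}p\,k^{-1}(\|A\|_{2p}^{4p} + 2\|A\|_{4p}^{4p})$, and one checks that $\frac{(p!)^2}{(2p-1)!}\binom{2p-1}{p}p = 2p^2$ for the $\|A\|_{4p}^{4p}$ coefficient — wait, the factor of $2$ in front of $\|A\|_{4p}^{4p}$ then yields exactly the claimed $2p^2\|A\|_{4p}^{4p}/k$. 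One also gets a spurious $\frac{(p!)^2}{(2p-1)!}\binom{2p-1}{p}p\|A\|_{2p}^{4p}/k = 2p^2\|A\|_{2p}^{4p}/k$ at order $1/k$; this must be cancelled by the next-order ($1/k$) correction coming from the $r=0$ block above, which is where the careful two-term expansion pays off. The $r=2$ term (the third summand, with $\binom{p}{2} = p(p-1)/2$) contributes at order $1/k^2$: its leading coefficient is $\frac{(p!)^2}{(2p-2)!}\binom{2p-2}{p}\binom{p}{2} = \frac{p^2(p-1)^2}{2}$, giving $\frac{p^2(p-1)^2}{2k^2}(6\|A\|_{4p}^{4p} + 3\|A\|_4^4\|A\|_{4p-4}^{4p-4})$; after combining with the $1/k^2$ pieces from the $r=0$ and $r=1$ blocks, one should recover the stated coefficient $\frac{p^2(p-1)^2}{k^2}(\|A\|_{4p}^{4p} + \frac32\|A\|_4^4\|A\|_{4p-4}^{4p-4} - \frac12\|A\|_{2p}^{4p})$. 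Finally, the sum $\sum_{r=3}^p$ in Theorem~\ref{thm:secondorderbound} is $\calO(1/k^3)$ after division, since each term there carries $\binom{k}{2p-r}\binom{k}{p}^{-2} = \calO(k^{-r})$ with $r \ge 3$, and the Schatten-norm factors and constants $c(r,\ell)$ are all independent of $k$; this gets absorbed into the $\calO(1/k^3)$ remainder.

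The main obstacle is bookkeeping rather than conceptual: one must track the $1/k$ correction terms in three separate blocks ($r=0,1,2$) and verify that all the $\calO(1)$ and $\calO(1/k)$ contributions that are not part of the final answer cancel exactly. In particular the cancellation of the $\calO(1)$ term (which must vanish since $\var(\KV) \to 0$) and of the spurious $2p^2\|A\|_{2p}^{4p}/k$ term at first order are the delicate points, and they serve as useful consistency checks — the former is automatically consistent with Theorem~\ref{thm:firstorder}, and the latter confirms that the $r=0$ and $r=1$ blocks must be expanded jointly. The claim that the $\calO(1/k^3)$ constant is a symmetric function of the singular values of $A$ depending only on $p$ follows because every term dropped is a product of Schatten norms (which are symmetric functions of the $\sigma_i$) with $k$-independent combinatorial constants depending only on $p$.
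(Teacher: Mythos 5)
Your proposal follows the same route as the paper: divide the bound of Theorem~\ref{thm:secondorderbound} by $\binom{k}{p}^2$, expand the binomial ratios in $1/k$, observe that the $r\ge 3$ terms are $\calO(k^{-3})$ after division, and combine the $r=0,1,2$ blocks. The structure is right and the general ratio $\binom{k}{2p-r}\binom{k}{p}^{-2}\sim \frac{(p!)^2}{(2p-r)!}k^{-r}$ is correct, but two points need repair. First, $\frac{(p!)^2}{(2p-1)!}\binom{2p-1}{p}\,p = p^2$, not $2p^2$; hence the $r=1$ block contributes $\frac{p^2}{k}\left(\|A\|_{2p}^{4p}+2\|A\|_{4p}^{4p}\right)$ at first order, the coefficient $2p^2$ of $\|A\|_{4p}^{4p}/k$ coming only from the factor $2$ already present in Lemma~\ref{lemma:r1}. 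The ``spurious'' term is therefore $p^2\|A\|_{2p}^{4p}/k$, and it is cancelled exactly by the $-p^2\|A\|_{2p}^{4p}/k$ produced by the $r=0$ block, since $\binom{k}{2p}\binom{2p}{p}\binom{k}{p}^{-2}=\prod_{i=0}^{p-1}\bigl(1-\tfrac{p}{k-i}\bigr)=1-\tfrac{p^2}{k}+\calO(k^{-2})$. With your stated coefficient $2p^2$ this cancellation would fail, so the slip is not cosmetic; relatedly, your earlier remark that the $r=0$ block's leading surviving term is $\calO(1/k^2)$ is inconsistent with then using its $-p^2/k$ term for the cancellation --- that block survives at order $1/k$.

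Second, the $1/k^2$ coefficient is only asserted (``one should recover''), whereas this is precisely where the bookkeeping must actually be done: one needs the second-order term of $\prod_{i=0}^{p-1}\bigl(1-\tfrac{p}{k-i}\bigr)$, which is $+\tfrac{p^2(p-1)^2}{2k^2}$, and the first-order correction $\prod_{i=1}^{p-1}\bigl(1-\tfrac{p-1}{k-i}\bigr)=1-\tfrac{(p-1)^2}{k}+\calO(k^{-2})$ multiplying the $r=1$ block. Collecting the $1/k^2$ terms then gives $p^2(p-1)^2\bigl[\tfrac12\|A\|_{2p}^{4p}-\bigl(\|A\|_{2p}^{4p}+2\|A\|_{4p}^{4p}\bigr)+3\|A\|_{4p}^{4p}+\tfrac32\|A\|_4^4\|A\|_{4p-4}^{4p-4}\bigr]$, which indeed equals the stated second-order coefficient. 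So the plan is the paper's plan and it does work, but as written the proposal neither verifies the first-order cancellation with the correct constants nor carries out the second-order combination.
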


\begin{remark}[\rev{A comparison with~\eqref{eq:varKV-original}}]
    \rev{It is not straightforward to compare the variance bound of $\hat \Theta_{2p}$ presented in Theorem~\ref{thm:secondorderbound} with the existing bound~\eqref{eq:varKV-original}, for a general matrix $A$, because different Schatten norms are involved in the bound of Theorem~\ref{thm:secondorderbound} and the ratios between different Schatten norms strongly depend on the singular value decay of the matrix. However, if the matrix $A$ is numerically low-rank, then all the Schatten norms are of comparable magnitude. If we assume, for simplicity, that all the Schatten norms are approximately $1$, we can further upper bound -- and simplify -- the result of Theorem~\ref{thm:secondorderbound}: Using the crude bounds $c(r,\ell) \le 3^p p^{p+1}$ and $\sum_{\ell=1}^r c(r,\ell) \le 3^p p^{p+2}$, we have
    }
    \rev{\begin{align}
        \mathrm{Var}(\KV) & \lesssim \left ( \frac{\binom{k}{2p} \binom{2p}{p}}{\binom{k}{p}^2} - 1\right ) + 3^p p^{p+2} \sum_{r=1}^p \frac{\binom{k}{2p-r} \binom{2p-r}{p} \binom{p}{r}}{\binom{k}{p}^2}\nonumber \\
        & \lesssim 0 + 3^p p^{p+2} \sum_{r=1}^p \frac{1}{k^r} \frac{p!p!}{(p-r)!r!(p-r)!}\le 3^p p^{p+2} p^{2p} \frac{p}{k} = 3^p p^{3p+3} \frac{1}{k},\label{eq:simplified}
    \end{align}
    where the second inequality holds approximately, since $\binom{k}{2p-r} \binom{k}{p}^{-2} \approx k^{-r} (p!)^2 / (2p-r)!$ when $k$ is much larger than $p$, and we totally ignore the denominator $((p-r)!)^2r!$ in the last line. Even with these crude approximations, the bound~\eqref{eq:simplified} compares favorably to~\eqref{eq:varKV-original}, suggesting that the bound of Theorem~\ref{thm:secondorderbound} improves~\eqref{eq:varKV-original} at least for numerically low-rank matrices. The numerical examples (\Cref{ex:identity,ex:varyingn}) in Section~\ref{sec:examples} show that Theorem~\ref{thm:secondorderbound} improves~\eqref{eq:varKV-original} also in cases with no singular value decay.}
\end{remark}

\begin{remark}[\rev{The constant in the $\mathcal O(\cdot)$}]
\rev{Both in Theorem~\ref{thm:firstorder} and Theorem~\ref{thm:secondorderbound}, the costant in the $\mathcal O(\cdot)$ is a function of $p$ and of the singular values of $A$.  While we refrain from attempting to write down an explicit constant, we will see in the numerical examples that this constant is moderately small if the matrix $A$ has singular values with strong singular value decay, which is the focus of our paper. In this case, the first-order and second-order estimates are good approximations for the variance for moderate values of $k$. This is not true when $A$ does not exhibit singular value decay: for instance, if we consider the identity matrix (see Example~\ref{ex:identity} below), the constant in the $\mathcal{O}(\cdot)$ actually leads to imprecise first-order and second-order estimates.}
\end{remark}

\begin{remark}[\rev{Rule of thumb for selecting $k$}]
\rev{ In the case of a numerically low-rank matrix $A$, when all Schatten norms are of comparable magnitude, Theorem~\ref{thm:firstorder} and Corollary~\ref{cor:secondorder} suggest that one needs to take $k$ to be proportional to $p^2$ in order to obtain a meaningful estimate for the $2p$-th power of the Schatten-$2p$ norm of $A$. }
\end{remark}

\begin{remark}[\rev{A comparison with Hutchinson trace estimation}]
\rev{When $p=1$, we have $\hat \theta_{2} = \frac{1}{k} \mathrm{trace}(\Omega^T A^T A \Omega)$, which coincides with the Hutchinson trace estimator applied to the matrix $A^TA$. When $p>1$, however, the two estimators differ and} the variance of $\KV$ is usually worse than the variance of estimators based on the Hutchinson trace estimator, though both are unbiased.
Specifically, since the variance of the vanilla Hutchinson trace estimator~\cite{Hutchinson1989} for a matrix $B$ with Gaussian random vectors is $2\|B\|_F^2$, applying it to~\eqref{eq:hutch} gives an unbiased estimator of $\|A\|_{2p}^{2p}$ with variance
\begin{equation*}
    \frac{2p}{k} \|A^{2p}\|_F^2 = \frac{2p}{k} \|A\|_{4p}^{4p},
\end{equation*}
which is smaller than \eqref{eq:varKV-ours}. We remark that, in order to use the Hutchinson trace estimator \rev{with $p>1$}, the \rev{matvecs with $A$ needs to be sequentially applied at least $p$ times}. When the only available information about the matrix $A$ is its sketch $Y$, to the best of the authors' knowledge, the estimator $\KV$ proposed in \cite{kong2017spectrum} is the only method to estimate the Schatten-$2p$ norms of $A$.
\end{remark}

\subsection{Proofs of the main results}\label{sec:proofs}

The proofs of our main results are separated into four subsections. 
\Cref{sec:breaking} breaks down the analysis for $\var(\KV)$ into several subtasks.
\Cref{sec:firstorder} presents the proof of first-order result (\Cref{thm:firstorder}). 
\Cref{sec:2-more-indices} and \Cref{sec:secondorder} are devoted for the upper bound (\Cref{thm:secondorderbound}) and second-order estimate (\Cref{cor:secondorder}).

\subsubsection{Breaking Var($\KV$) into mathematically manageable pieces}
\label{sec:breaking}

Let $A$ be an $m \times n$ deterministic matrix and $\Omega \in \mathbb{R}^{n \times k}$ a random matrix with i.i.d. $N(0,1)$ entries. Considering the full singular value decomposition $A = U \Sigma V^T$, where $U \in \mathbb{R}^{m \times m}$, $\Sigma \in \mathbb{R}^{m \times n}$, and $V \in \mathbb{R}^{n \times n}$, the entries of $\widetilde{\Omega} := V^T \Omega$ are also i.i.d. $N(0,1)$ random variables, due to the rotation invariance of the standard Gaussian distribution. \rev{Then
\begin{equation*}
    \Omega^T A^T A \Omega = \widetilde \Omega^T  \Sigma^T U^T U\Sigma \widetilde \Omega = \widetilde \Omega^T \Sigma^T \Sigma \widetilde \Omega.
\end{equation*}}
Denoting
\begin{equation*}
    \Sigma^T \Sigma = \begin{bmatrix} \sigma_1^2 & & & & \\ & \sigma_2^2 & & & \\ & & \ddots & & \\ & & & \ddots & \\ & & & & \sigma_n^2 \end{bmatrix} \quad \text{ and } \quad \widetilde \Omega = \begin{bmatrix} \omega_{1,1} & \omega_{1,2} & \cdots & \omega_{1,k} \\ \omega_{2,1} & \omega_{2,2} & \cdots & \omega_{2,k} \\ \vdots & \vdots & & \vdots \\ \vdots & \vdots & & \vdots \\ \omega_{n,1} & \omega_{n,2} & \cdots & \omega_{n,k} \end{bmatrix}
\end{equation*}
with the convention that $\sigma_{\min\{n,m\}+1} =\ldots =  \sigma_{\max\{n,m\}} = 0$ and using \rev{the first identity in \eqref{eq:kv-def}}, the estimator $\KV$ from \Cref{alg:KV} takes the form:
\begin{align*}
    \KV & = \rev{\binom{k}{p}^{-1} \sum_{i_1<\cdots<i_p} (\widetilde \Omega^T \Sigma^T \Sigma \widetilde \Omega)_{i_1i_2}(\widetilde \Omega^T \Sigma^T \Sigma \widetilde \Omega)_{i_2i_3}\cdots (\widetilde \Omega^T \Sigma^T \Sigma \widetilde\Omega)_{i_pi_1}}\\
    & = \binom{k}{p}^{-1} \sum_{i_1 < \cdots < i_p} (\sigma_1^2 \omega_{1, i_1}\omega_{1, i_2} + \cdots + \sigma_n^2 \omega_{n, i_1} \omega_{n, i_2}) \cdots (\sigma_1^2 \omega_{1, i_p}\omega_{1, i_1} + \cdots + \sigma_n^2 \omega_{n, i_p} \omega_{n, i_1}) \\
    & = \binom{k}{p}^{-1}  \sum_{i_1 < \cdots < i_p} \prod_{h = 1}^p \left ( \sum_{t = 1}^n \sigma_t^2 \omega_{t, i_h} \omega_{t, i_{h+1}}\right ),
 \end{align*}
with the indices $i_1, \ldots, i_p$ ranging from $1$ to $k$ and with the convention that $i_{p+1} \equiv i_1$. For ease of notation, \rev{we work with the rescaled quantity $\X$ defined in~\eqref{eq:Theta}. Then,}%define $\X := \binom{k}{p} \KV$ so that
\begin{equation}\label{eq:varKVX}
    \mathrm{Var}(\KV) = \binom{k}{p}^{-2}\left ( \mathbb{E}[\X^2] - \mathbb{E}[\X]^2 \right ) = \binom{k}{p}^{-2} \mathbb{E}[\X^2] - \|A\|_{2p}^{4p},
\end{equation}
where the second equality follows because $\KV$ is an unbiased estimator of $\|A\|_{2p}^{2p}$. 
It remains to estimate/bound the second moment of $\X$. Note that
\begin{equation}\label{eq:Xsquare}
\hat \Theta_{2p}^2 = \sum_{i_1 < \cdots < i_p \atop j_1 < \cdots < j_p} \prod_{h = 1}^p \left [ \left ( \sum_{t = 1}^n \sigma_t^2 \omega_{t, i_h} \omega_{t, i_{h+1}}\right ) \left ( \sum_{\rev{s} = 1}^n \sigma_{\rev{s}}^2 \omega_{\rev{s}, j_h} \omega_{\rev{s}, j_{h+1}}\right ) \right ],
\end{equation}
where the indices $i_1, \ldots, i_p$ and $j_1, \ldots, j_p$ range from $1$ to $k$. 
Estimating or bounding the variance of $\rev{\hat \Theta_{2p}}$ involves estimating or bounding the expectation of each term in the outer sum from~\eqref{eq:Xsquare}. For ease of notation, we denote
\begin{equation}\label{eq:f}
    f_{i_1,\ldots,j_p} := \prod_{h = 1}^p \left [ \left ( \sum_{t = 1}^n \sigma_t^2 \omega_{t, i_h} \omega_{t, i_{h+1}}\right ) \left ( \sum_{\rev{s} = 1}^n \sigma_{\rev{s}}^2 \omega_{\rev{s}, j_h} \omega_{\rev{s}, j_{h+1}}\right ) \right ];
\end{equation}
\rev{we define 
\begin{equation*}
    \mathcal I_r := \left \{ 2p\text{-tuples } i_1 < \cdots <i_p,j_1 <\cdots <j_p \text{ among which we have }r\text{ repeated indices}\right \}
\end{equation*}
and let 
\begin{equation} \label{eq:Fr}
F_r := \sum_{(i_1, \ldots, j_p) \in \mathcal I_r} \mathbb{E}[f_{i_1,\ldots,j_p}].
\end{equation}
be the sum of $\E[f_{i_1,\ldots,j_p}]$ over the choices of $i_1 < \cdots < i_p$ and $j_1 < \cdots < j_p$ with $r$ repeated indices.}
Then, we can split the second moment of $\X$ as
\begin{equation}\label{eq:splitXsquare}
\mathbb{E}[\hat \Theta_{2p}^2] = \sum_{r=0}^p \left( \sum_{\rev{(i_1,\ldots,j_p) \in \mathcal I_r}} \mathbb{E}[f_{i_1,\ldots,j_p}] \right) = \sum_{r=0}^p F_r.
\end{equation}
Our analysis follows by estimating or bounding each $F_r$.
\Cref{lemma:rrepeated,lem:degreeofk} below describes the contribution of each $F_r$ to $\E[\hat \Theta_{2p}^2]$ in terms of the order of $k$.

\begin{lemma}\label{lemma:rrepeated}
The \rev{cardinality of the set $\mathcal I_r$, for %number of $2p$-tuples $i_1, \ldots, i_p, j_1, \ldots, j_p$ such that $i_1 < \cdots < i_p$, $j_1 < \cdots < j_p$, and among $i_1, \ldots, j_p$ there are $r$ repeated indices 
$0 \le r \le p$,} is $\binom{k}{2p-r} \binom{2p-r}{p} \binom{p}{r}$.
\end{lemma}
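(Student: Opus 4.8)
The plan is to reduce the count to a simple sequential-choice argument by first passing from tuples to sets. Because the entries of each $2p$-tuple in $\mathcal I_r$ satisfy $i_1 < \cdots < i_p$ and $j_1 < \cdots < j_p$, the ordering carries no information: a tuple in $\mathcal I_r$ is determined by, and determines, the ordered pair of $p$-element subsets $I := \setip$ and $J := \setjp$ of $[k]$. Moreover each sequence has distinct entries, so an index can be ``repeated'' only by appearing once in $I$ and once in $J$; hence ``$r$ repeated indices'' means exactly $|I \cap J| = r$, equivalently $|I \cup J| = 2p - r$. So it suffices to count ordered pairs $(I,J)$ of $p$-subsets of $[k]$ with $|I \cap J| = r$.

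I would then count such pairs by choosing, in order: (i) the support $S := I \cup J$, an arbitrary $(2p-r)$-subset of $[k]$, contributing $\binom{k}{2p-r}$; (ii) the subset $I \subseteq S$ with $|I| = p$, contributing $\binom{2p-r}{p}$ — this already forces $S \setminus I$, which has $(2p-r)-p = p-r$ elements, to lie inside $J$, since $I \cup J = S$; and (iii) the intersection $I \cap J \subseteq I$, a set of size $r$, contributing $\binom{p}{r}$. Given these three choices, $J := (S \setminus I) \cup (I \cap J)$ is determined, has $(p-r) + r = p$ elements, and satisfies $I \cup J = S$ and $I \cap J$ exactly as prescribed; conversely any pair $(I,J)$ with $|I\cap J| = r$ arises in this way from the unique choice $S = I \cup J$, the given $I$, and the given $I \cap J$. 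Thus the three selections are independent, and multiplying gives $\binom{k}{2p-r}\binom{2p-r}{p}\binom{p}{r}$, as claimed. (Equivalently, steps (ii)–(iii) together amount to labelling each element of $S$ as lying in $I$ only, in $J$ only, or in both, i.e. the multinomial $\binom{2p-r}{p-r,\,p-r,\,r} = \binom{2p-r}{p}\binom{p}{r}$.)

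There is no substantial obstacle in this argument; the only point needing a line of care is checking that the map $(S, I, I\cap J) \mapsto (I, J)$ is a bijection onto the pairs with $|I\cap J| = r$ — i.e. that the reconstructed $J$ has the correct cardinality, intersection, and union with $I$ — which is the short verification indicated above. As a consistency check one can note that $\sum_{r=0}^p \binom{k}{2p-r}\binom{2p-r}{p}\binom{p}{r} = \binom{k}{p}^2$, matching the total number of pairs of increasing $p$-tuples that appears in~\eqref{eq:Xsquare}.
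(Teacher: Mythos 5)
Your proof is correct and follows essentially the same route as the paper's: choose the $(2p-r)$-element support, then the $p$-subset forming $I$, then the $r$-subset of $I$ that is shared with $J$, with the remaining structure (and the ordering within each tuple) uniquely determined. Your explicit bijection check and the Vandermonde consistency check $\sum_{r=0}^p \binom{k}{2p-r}\binom{2p-r}{p}\binom{p}{r} = \binom{k}{p}^2$ are nice additions but do not change the argument.
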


\begin{proof}
    The set $\{i_1, \ldots, j_p\}$ contains $2p-r$ distinct values from the set $\{1, \ldots, k\}$, so there are $\binom{k}{2p-r}$ possibilities for this. There are now $\binom{2p-r}{p}$ ways of assigning $i_1, \ldots, i_p$ to some of the chosen indices. Finally, there are $\binom{p}{r}$ ways of choosing which of these $p$ indices are repeated among $j_1, \ldots, j_p$. Once we made the decision, the assignment is uniquely determined by the fact that $i_1 < \cdots < i_p$ and $j_1 < \cdots < j_p$.
\end{proof}

\begin{lemma}\label{lem:degreeofk}
For each $0 \leq r \leq p$, the quantity $F_r$ defined in \eqref{eq:Fr}, as a polynomial in $k$, has degree at most $\mathcal O(k^{2p-r})$.
Hence, the second moment $\E[\hat \Theta_{2p}^2]$, as a polynomial in $k$, has degree at most $2p$.
\end{lemma}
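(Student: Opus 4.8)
The plan is to prove the sharper statement that, for each $0\le r\le p$, the quantity $F_r$ from \eqref{eq:Fr} equals $\binom{k}{2p-r}\,C_r$ for some constant $C_r$ depending only on $p$ and on the singular values of $A$ (not on $k$). Since $\binom{k}{2p-r}$ is a polynomial in $k$ of degree $2p-r$, this immediately gives $F_r = \mathcal O(k^{2p-r})$, and then summing \eqref{eq:splitXsquare} over $r=0,\dots,p$ shows that $\E[\X^2] = \sum_{r=0}^p F_r$ is a polynomial in $k$ of degree at most $2p$ (the top-degree contribution coming from $F_0$).

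The first step is to observe that $\E[f_{i_1,\ldots,j_p}]$ depends on the $2p$-tuple $(i_1,\ldots,j_p)$ only through its \emph{coincidence pattern} --- the partition of the $2p$ index slots into blocks of equal indices --- and not on $k$ nor on the concrete values of the indices. Indeed, expanding \eqref{eq:f} writes $f_{i_1,\ldots,j_p}$ as a finite sum of monomials in the entries $\omega_{t,c}$ of $\widetilde\Omega$ with coefficients built from the $\sigma_t^2$'s; because the columns of $\widetilde\Omega$ are i.i.d., applying any injection $\phi\colon[k]\to[k]$ to the column labels leaves the joint law of the entries unchanged, so $\E[f_{i_1,\ldots,j_p}] = \E[f_{\phi(i_1),\ldots,\phi(j_p)}]$. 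Any two tuples with the same coincidence pattern are related by such a $\phi$, hence share the same expectation, and that common value is a (symmetric) polynomial in $\sigma_1^2,\ldots,\sigma_n^2$, independent of $k$.

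The second step is counting, essentially already done in Lemma~\ref{lemma:rrepeated}. Since $i_1<\cdots<i_p$ and $j_1<\cdots<j_p$, no two $i$-slots (and no two $j$-slots) coincide, so a coincidence pattern with $r$ repeats is specified by choosing which $p$ of the $2p-r$ abstract values carry the $i$-slots and which $r$ of those are shared with the $j$-slots --- exactly $\binom{2p-r}{p}\binom{p}{r}$ choices, a number independent of $k$ --- after which the increasing constraints make the concrete tuple unique once the set of $2p-r$ values in $[k]$ has been fixed. Hence every pattern is realized by exactly $\binom{k}{2p-r}$ tuples of $\mathcal I_r$, and grouping the sum in \eqref{eq:Fr} according to the pattern $P$ of the tuple gives
\[
F_r = \binom{k}{2p-r}\sum_{P}\,\E[f]_P = \binom{k}{2p-r}\,C_r,\qquad C_r := \sum_{P}\E[f]_P,
\]
with $C_r$ free of $k$, which is what we wanted.

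The only part of this carrying genuine content is the column-relabeling invariance of $\E[f_{i_1,\ldots,j_p}]$ used in the first step; everything else is bookkeeping. I would only be mildly careful about degenerate parameter ranges --- for instance $2p-r>k$, where $\mathcal I_r$ is empty and $\binom{k}{2p-r}=0$, or matrices with some $\sigma_t=0$ --- but these require no additional argument.
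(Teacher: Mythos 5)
Your proposal is correct and follows essentially the same route as the paper: the paper's proof likewise combines the $k$-independence of $\E[f_{i_1,\ldots,j_p}]$ with the count from Lemma~\ref{lemma:rrepeated}. Your version is slightly more careful in justifying \emph{why} the expectation is independent of $k$ (exchangeability of the columns of $\widetilde\Omega$, so the expectation depends only on the coincidence pattern), which yields the sharper exact factorization $F_r = \binom{k}{2p-r}C_r$, but the substance is the same.
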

\begin{proof}
Since the distribution of $\omega_{i,j}$ is independent of $k$ for any index $(i,j)$, so is the expectation $\mathbb{E}[f_{i_1,\ldots,j_p}]$.
By \Cref{lemma:rrepeated}, $F_r$ grows (as a function in $k$) at the same rate as $\binom{k}{2p-r} \binom{2p-r}{p} \binom{p}{r}$, which is a polynomial in $k$ of degree $2p-r$.
\end{proof}

\subsubsection{First-order analysis}\label{sec:firstorder}
Recall from \eqref{eq:varKVX} and \eqref{eq:splitXsquare}, the variance of $\KV$ is 
\begin{equation} \label{eq:splitFr}
\mathrm{Var}(\KV) 
= \binom{k}{p}^{-2} \mathbb{E}[\hat \Theta_{2p}^2] - \|A\|_{2p}^{4p} 
= \binom{k}{p}^{-2} \sum_{r=0}^p F_r - \|A\|_{2p}^{4p}.
\end{equation}
Since $\binom{k}{p}^2$ is a polynomial (in $k$) of degree $2p$ and, by \Cref{lem:degreeofk}, $F_r$ is a polynomial in $k$ of degree $2p-r$, it suffices to consider $F_0$ and $F_1$ in the first-order analysis.
Therefore, we are interested in the sum of $\E[f_{i_1,\ldots,j_p}]$ over the choices of \rev{$2p$-tuples $i_1 < \cdots < i_p$ and $j_1 < \cdots < j_p$ in either $\mathcal I_0$ or $\mathcal I_1$}. 
Expanding the product of the two summations in \eqref{eq:f}, 
the expectation $\E[f_{i_1,\ldots,j_p}]$ can be further decomposed into the sum of $n^{2p}$ expectations:
\begin{equation} \label{eq:f-to-sum-Y}
\mathbb{E}[f_{i_1,\ldots,j_p}] = \sum_{(t_1, \ldots,s_p) \in [n]^{2p}} \mathbb{E}\left[ \Y \right],
\end{equation}
where the random variable $\Y$ takes the form
\begin{equation}\label{eq:singleterm}
\Y :=\prod_{h=1}^p \sigma_{t_h}^2 \omega_{t_h,i_h} \omega_{t_h,i_{h+1}} \sigma_{s_h}^2 \omega_{s_h,j_h} \omega_{s_h,j _{h+1}}.
\end{equation}
Observe that, if any of the $\omega$-variables appearing in~\eqref{eq:singleterm} has an odd exponent, the expectation of $\Y$ becomes zero since all odd moments of standard Gaussian random variables are zero. This observation gives the following lemma.
\begin{lemma} \label{lem:nonrepeated-ih}
Suppose $i_h$ (resp. $j_h$), $h \in \{1, 2, \ldots, p\}$, is a non-repeated index among $i_1 < \cdots < i_p$ and $j_1 < \cdots < j_p$.
Then a necessary condition for $\E[\Y]$ to be nonzero is that $t_{h-1} = t_h$ (resp. $s_{h-1}=s_h$), with the convention that $t_0 \equiv t_p$ and $s_0 \equiv s_p$ for simpler notation.
\end{lemma}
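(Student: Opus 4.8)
The plan is to combine the independence of the entries of $\widetilde\Omega$ with the fact that all odd moments of an $N(0,1)$ variable vanish. Fix a tuple $(t_1,\ldots,s_p)\in[n]^{2p}$ and view $\Y$ from~\eqref{eq:singleterm} as the deterministic constant $\prod_{h=1}^p \sigma_{t_h}^2 \sigma_{s_h}^2$ times a monomial in the i.i.d.\ standard Gaussians $\{\omega_{a,b}\}$. Writing that monomial as $\prod_{(a,b)} \omega_{a,b}^{e_{a,b}}$ for suitable nonnegative exponents $e_{a,b}$, independence gives $\E[\Y] = \bigl(\prod_{h=1}^p \sigma_{t_h}^2 \sigma_{s_h}^2\bigr)\prod_{(a,b)} \E[\omega_{a,b}^{e_{a,b}}]$. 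Since $\E[\omega_{a,b}^{e}] = 0$ whenever $e$ is odd, a necessary condition for $\E[\Y]\neq 0$ is that every exponent $e_{a,b}$ is even.

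The second step is to track, for a non-repeated index $i_h$, the exponent of the Gaussians that carry the column index $i_h$. Scanning the factors in~\eqref{eq:singleterm}, such a variable can only appear as: $\omega_{t_h,i_h}$ from the $h$-th factor of the ``$i$-product''; $\omega_{t_{h-1},i_h}$ from the $(h-1)$-th factor, using $i_{(h-1)+1}=i_h$ together with the cyclic convention $i_{p+1}\equiv i_1$; or a factor $\omega_{s_{h'},j_{h'}}$ or $\omega_{s_{h'},j_{h'+1}}$ with $j_{h'}=i_h$ or $j_{h'+1}=i_h$. The assumption that $i_h$ is not repeated among $i_1<\cdots<i_p$ and $j_1<\cdots<j_p$ excludes the last possibility and shows that the first two are the \emph{only} occurrences of the column index $i_h$. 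Hence the total exponent attached to column $i_h$ equals $e_{t_{h-1},i_h}+e_{t_h,i_h}$, with exactly one unit coming from each of the two occurrences. If $t_{h-1}\neq t_h$, these are two distinct Gaussians each raised to the first power, so $e_{t_h,i_h}=1$ is odd and $\E[\Y]=0$ by the first step. Therefore $\E[\Y]\neq 0$ forces $t_{h-1}=t_h$. For a non-repeated $j_h$ the same argument applies verbatim after swapping the roles of the $i$- and $t$-indices with the $j$- and $s$-indices, giving $s_{h-1}=s_h$.

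The argument is essentially a parity count, so there is no substantive obstacle; the only point that needs care is the \emph{exhaustive} enumeration of the occurrences of a fixed column index in~\eqref{eq:singleterm} and the verification that the non-repeatedness hypothesis rules out any contribution from the other block of $\omega$-variables. Once that enumeration is pinned down, the conclusion is immediate, and this lemma will then be used to drastically restrict the tuples $(t_1,\ldots,s_p)$ that contribute to~\eqref{eq:f-to-sum-Y}.
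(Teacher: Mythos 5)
Your argument is correct and is essentially the paper's own proof: both reduce to the observation that $\E[\Y]$ vanishes unless every Gaussian factor appears with even exponent, and then use non-repeatedness of $i_h$ to show that the only two occurrences of column index $i_h$ in the product are $\omega_{t_h,i_h}$ and $\omega_{t_{h-1},i_h}$, forcing $t_{h-1}=t_h$. Your write-up is merely a bit more explicit about the factorization of the expectation and the exhaustive enumeration of occurrences, but the approach is the same.
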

\begin{proof}
The product of the random variables in \eqref{eq:singleterm} is
\begin{equation} \label{eq:omega-prod}
\omega_{t_1,i_1}\omega_{t_1,i_2}\omega_{t_2,i_2} \omega_{t_2,i_3} \cdots \omega_{t_p,i_p} \omega_{t_p,i_1} \omega_{s_1,j_1}\omega_{s_1,j_2}\omega_{s_2,j_2} \omega_{s_2,j_3} \cdots \omega_{s_p,j_p} \omega_{s_p,j_1}.
\end{equation}
If $\Y$ has nonzero expectation, each $\omega$-variable in \eqref{eq:omega-prod} must have even exponent. 
In particular, there must be a second copy of $\omega_{t_h, i_h}$. This can happen only if $\omega_{t_h, i_h} = \omega_{t_{h-1}, i_h}$ since $i_h$ is different from any other index in $\{i_1, \ldots, j_p\} \setminus \{i_h\}$ (\ie $i_h$ is non-repeated index). Hence, $t_{h-1} = t_h$.
The argument for the $s$-indices is the same.
\end{proof}

Leveraging \Cref{lem:nonrepeated-ih}, we can compute $\E[f_{i_1, \ldots, j_p}]$ in the cases where there are either no repeated indices or only one repeated index among $i_1, \ldots, j_p$. 

\begin{lemma}\label{lemma:r0}
If \rev{$(i_1, \ldots, j_p) \in \mathcal I_0$} %there is no repeated index among $i_1 < \cdots < i_p$ and $j_1 < \cdots < j_p$,
then $$\mathbb{E}[f_{i_1,\ldots,j_p}] = \|A\|_{2p}^{4p}.$$
\end{lemma}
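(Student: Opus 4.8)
The plan is to use \Cref{lem:nonrepeated-ih} to collapse the $n^{2p}$-term sum in \eqref{eq:f-to-sum-Y} down to a single double sum over $[n]^2$, and then to evaluate the surviving second moments of Gaussians directly. First I would observe that, since $(i_1,\ldots,j_p)\in\mathcal I_0$, the $2p$ indices $i_1,\ldots,i_p,j_1,\ldots,j_p$ are pairwise distinct; in particular every $i_h$ and every $j_h$ is a non-repeated index. Applying \Cref{lem:nonrepeated-ih} for each $h\in\{1,\ldots,p\}$ shows that $\E[\Y]\neq 0$ forces $t_{h-1}=t_h$ and $s_{h-1}=s_h$ for all $h$, hence $t_1=\cdots=t_p=:t$ and $s_1=\cdots=s_p=:s$ for some $t,s\in[n]$. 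Thus \eqref{eq:f-to-sum-Y} reduces to $\E[f_{i_1,\ldots,j_p}]=\sum_{t=1}^n\sum_{s=1}^n\E[\Y]$, where each $\Y$ is evaluated at $t_h\equiv t$, $s_h\equiv s$.

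Next I would simplify each remaining term. Setting $t_h\equiv t$ and $s_h\equiv s$ in \eqref{eq:singleterm}, and recalling the cyclic conventions $i_{p+1}\equiv i_1$, $j_{p+1}\equiv j_1$, each factor $\omega_{t,i_h}$ occurs exactly twice in $\prod_{h=1}^p\omega_{t,i_h}\omega_{t,i_{h+1}}$ (once from the $h$-th factor, once from the $(h-1)$-th), so this product equals $\prod_{h=1}^p\omega_{t,i_h}^2$, and likewise the $s$-part equals $\prod_{h=1}^p\omega_{s,j_h}^2$. Hence the term equals $\sigma_t^{2p}\sigma_s^{2p}\prod_{h=1}^p\omega_{t,i_h}^2\prod_{h=1}^p\omega_{s,j_h}^2$. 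Because $i_1,\ldots,i_p,j_1,\ldots,j_p$ are pairwise distinct, the $2p$ random variables $\omega_{t,i_1},\ldots,\omega_{t,i_p},\omega_{s,j_1},\ldots,\omega_{s,j_p}$ are pairwise distinct, so by independence $\E\big[\prod_h\omega_{t,i_h}^2\prod_h\omega_{s,j_h}^2\big]=1$, i.e. $\E[\Y]=\sigma_t^{2p}\sigma_s^{2p}$. Summing over $t$ and $s$ gives
\[
\E[f_{i_1,\ldots,j_p}]=\Big(\sum_{t=1}^n\sigma_t^{2p}\Big)\Big(\sum_{s=1}^n\sigma_s^{2p}\Big)=\|A\|_{2p}^{4p}.
\]

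I do not expect a real obstacle here: this is the ``clean'' base case of the variance analysis, and everything follows mechanically once one notes that membership in $\mathcal I_0$ makes all $2p$ indices distinct. The only point worth double-checking is that distinctness of the indices indeed forces the $2p$ squared Gaussian factors to be distinct variables in the borderline case $t=s$ -- which it does, since $\omega_{t,i_h}$ and $\omega_{s,j_{h'}}$ always differ in their second subscript (so no variable ever appears with exponent $4$). Everything else is a direct Gaussian moment computation.
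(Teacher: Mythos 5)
Your proof is correct and follows essentially the same route as the paper's: invoke Lemma~\ref{lem:nonrepeated-ih} to force $t_1=\cdots=t_p$ and $s_1=\cdots=s_p$, evaluate each surviving term as $\sigma_{t}^{2p}\sigma_{s}^{2p}$ by independence of the $2p$ distinct Gaussian variables, and sum over $t,s$. Your explicit check that the case $t=s$ still yields only squared (never fourth-power) factors is a nice touch the paper leaves implicit, but it is the same argument.
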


\begin{proof}
Consider a single nonzero term $\E[\Y]$ on the right-hand-side of \eqref{eq:f-to-sum-Y}. Then $t_1 = \ldots = t_p$ and $s_1 = \ldots = s_p$ by \Cref{lem:nonrepeated-ih} since there is no repeated index among $i_1 < \cdots < i_p$ and $j_1 < \cdots < j_p$. In this case, we have
\begin{equation*}
    \mathbb{E}[\Y] = \mathbb{E} \left [ \sigma_{t_1}^{2p} \sigma_{s_1}^{2p} \prod_{h=1}^p \omega_{t_1,i_h}^2 \omega_{s_1,j_h}^2 \right ] = \sigma_{t_1}^{2p} \sigma_{s_1}^{2p},
\end{equation*}
where the last equality follows from the fact that the $\omega_{t_1,i_h}$ and $\omega_{s_1,j_h}$ are independent random variables since there is no repeated index among $i_1, \ldots, j_p$. To obtain $\mathbb{E}[f_{i_1,\ldots,j_p}]$ we need to sum over all possible choices of $t_1$ and $s_1$, each of which can be anything in $\{1,\ldots,n\}$. Therefore,
\begin{equation*}
    \mathbb{E}[f_{i_1,\ldots,j_p}] = \sum_{t=1}^n \sum_{s=1}^n \sigma_t^{2p} \sigma_s^{2p} = \|A\|_{2p}^{4p}. \qedhere
\end{equation*}
\end{proof}

\begin{lemma}\label{lemma:r1}
If \rev{$(i_1, \ldots, j_p) \in \mathcal I_1$} %there is exactly one repeated index among $i_1 < \cdots < i_p$ and $j_1 < \cdots < j_p$, 
then $$\mathbb{E}[f_{i_1,\ldots,j_p}] = \|A\|_{2p}^{4p} + 2\|A\|_{4p}^{4p} .$$
\end{lemma}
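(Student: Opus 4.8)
The plan is to follow the template of Lemma~\ref{lemma:r0}, carefully tracking the effect of the single collision. Since $i_1<\cdots<i_p$ are distinct and $j_1<\cdots<j_p$ are distinct, "exactly one repeated index among $\{i_1,\ldots,j_p\}$'' means there is a unique pair of positions $(a,b)$ with $i_a=j_b$, and the remaining $2p-2$ indices are pairwise distinct and distinct from $i_a$; by relabeling we may assume this. I would then fix one term $\mathbb{E}[\Y]$ in the decomposition~\eqref{eq:f-to-sum-Y} and first decide which terms survive. Every $i_h$ with $h\neq a$ is a non-repeated index, so Lemma~\ref{lem:nonrepeated-ih} forces $t_{h-1}=t_h$ for all $h\in\{1,\ldots,p\}\setminus\{a\}$ whenever $\mathbb{E}[\Y]\neq 0$. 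Viewing $t_1,\ldots,t_p$ as the vertices of a cycle with edges $\{(h-1,h):h\in[p]\}$ (indices mod $p$), these $p-1$ constraints identify the endpoints of every edge except $(a-1,a)$; removing a single edge from a $p$-cycle leaves a connected path, so $t_1=\cdots=t_p=:t$. The identical argument on the $j$-chain gives $s_1=\cdots=s_p=:s$. Hence only the $n^2$ tuples with constant $t$- and $s$-blocks can contribute.

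For such a tuple, I would substitute $t_h\equiv t$, $s_h\equiv s$ into~\eqref{eq:singleterm}; since in each cyclic product every column index $i_h$ (resp.\ $j_h$) occurs exactly twice,
\begin{equation*}
\Y=\sigma_t^{2p}\sigma_s^{2p}\,\omega_{t,i_a}^2\,\omega_{s,i_a}^2\prod_{h\neq a}\omega_{t,i_h}^2\prod_{h\neq b}\omega_{s,j_h}^2 .
\end{equation*}
Because $i_a=j_b$ is the only column shared by the two blocks, and the indices $\{i_h:h\neq a\}$, $\{j_h:h\neq b\}$, $i_a$ label distinct columns, the factors $\omega_{t,i_h}^2$ ($h\neq a$) and $\omega_{s,j_h}^2$ ($h\neq b$) are mutually independent, independent of $\omega_{t,i_a}^2\omega_{s,i_a}^2$, and each has unit mean. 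Therefore $\mathbb{E}[\Y]=\sigma_t^{2p}\sigma_s^{2p}\,\mathbb{E}[\omega_{t,i_a}^2\omega_{s,i_a}^2]$, which equals $\sigma_t^{2p}\sigma_s^{2p}$ when $t\neq s$ and $3\sigma_t^{4p}$ when $t=s$ (the fourth moment of a standard Gaussian).

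Finally I would sum over $(t_1,\ldots,s_p)$, i.e.\ over $t,s\in[n]$:
\begin{equation*}
\mathbb{E}[f_{i_1,\ldots,j_p}]=\sum_{t\neq s}\sigma_t^{2p}\sigma_s^{2p}+3\sum_{t=1}^n\sigma_t^{4p}
=\Big(\sum_{t=1}^n\sigma_t^{2p}\Big)^2+2\sum_{t=1}^n\sigma_t^{4p}
=\|A\|_{2p}^{4p}+2\|A\|_{4p}^{4p}.
\end{equation*}
The only mildly delicate points are the cycle-connectivity argument collapsing the $t$'s and $s$'s, which must be phrased for the cyclic (not linear) index structure and degenerates harmlessly when $p=1$, and the bookkeeping isolating $\omega_{t,i_a}^2\omega_{s,i_a}^2$ as the sole non-independent pair; everything else is the same Gaussian-moment computation as in Lemma~\ref{lemma:r0}.
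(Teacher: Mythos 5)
Your proposal is correct and follows essentially the same route as the paper's proof: Lemma~\ref{lem:nonrepeated-ih} collapses the $t$- and $s$-blocks to constants, and the case split $t\neq s$ versus $t=s$ (with the single Gaussian fourth moment contributing the factor $3$) yields $\sum_{t\neq s}\sigma_t^{2p}\sigma_s^{2p}+3\sum_t\sigma_t^{4p}=\|A\|_{2p}^{4p}+2\|A\|_{4p}^{4p}$. The only difference is presentational: you keep the collision position $(a,b)$ general and justify the collapse via cycle connectivity, whereas the paper assumes without loss of generality that the repeated index is $i_1$ and chains the equalities $t_h=t_{h-1}$ directly.
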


\begin{proof}
Let $\E[\Y]$ be a nonzero term on the right-hand-side of \eqref{eq:f-to-sum-Y}. 
Without loss of generality, let us assume that the repeated index is $i_1$. Then \Cref{lem:nonrepeated-ih} guarantees that $t_h = t_{h-1}$ for $h=2,\ldots,p$, so that $t_1 = \ldots = t_p$. The same argument also proves $s_1 = \ldots = s_p$. 
    
Now, given $t_1 = \ldots = t_p$ and $s_1 = \ldots = s_p$, we consider two disjoint subcases:
\begin{itemize}
\item If $t_1 \neq s_1$, then
\begin{equation*}
    \mathbb{E}[\Y] = \mathbb{E} \left [ \sigma_{t_1}^{2p} \sigma_{s_1}^{2p} \prod_{h=1}^p \omega_{t_1,i_h}^2 \omega_{s_1,j_h}^2 \right ] = \sigma_{t_1}^{2p} \sigma_{s_1}^{2p},
\end{equation*}
where the last equality follows from the fact that the $\omega_{t_1,i_h}$ and $\omega_{s_1,j_h}$ are independent random variables since $t_1 \neq s_1$.
\item If $t_1 = s_1$, then the fact that $i_1, \ldots, i_p$ and $j_1, \ldots, j_p$ have exactly one repeated index means that the product \eqref{eq:singleterm} has exactly one $N(0,1)$ random variable to the fourth power and all the others are squares of $N(0,1)$ random variables. Therefore,
\begin{equation*}
    \mathbb{E}[\Y] = \mathbb{E} \left [ \sigma_{t_1}^{2p} \sigma_{t_1}^{2p} \prod_{h=1}^p \omega_{t_1,i_h}^2 \omega_{t_1,j_h}^2 \right ] = 3 \sigma_{t_1}^{4p},
\end{equation*}
where the factor of $3$ comes from the single fourth moment of $N(0,1)$.
\end{itemize}
To obtain $\mathbb{E}[f_{i_1,\ldots,j_p}]$, we sum over all possible choices of $t_1$ and $s_1$, each of which can be anything in $\{1,\ldots,n\}$. Therefore,
\begin{equation*}
    \mathbb{E}[f_{i_1,\ldots,j_p}] = 3 \sum_{t=1}^n \sigma_t^{4p} + \sum_{t\neq s} \sigma_t^{2p} \sigma_s^{2p} = 3\|A\|_{4p}^{4p} + \left ( \|A\|_{2p}^{4p} - \|A\|_{4p}^{4p} \right ) = \|A\|_{2p}^{4p} + 2\|A\|_{4p}^{4p}. \qedhere
\end{equation*}
\end{proof}

Given \Cref{lemma:r0,lemma:r1}, we are ready to compute the first-order expansion of $\var(\KV)$ in $1/k$ presented in \Cref{thm:firstorder}.

\begin{proof}[Proof of \Cref{thm:firstorder}]
Combining \Cref{lemma:rrepeated,lemma:r0,lemma:r1}, we obtain
\begin{align*}
F_0 &= \binom{k}{2p} \binom{2p}{p} \|A\|_{2p}^{4p} = \frac{k^{2p} - p(2p-1)k^{2p-1}}{(p!)^2} \|A\|_{2p}^{4p} + \calO(k^{2p-2}), \\
F_1 &= \binom{k}{2p-1} \binom{2p-1}{p} p (\|A\|_{2p}^{4p} + 2 \|A\|_{4p}^{4p}) \\
&= \frac{k^{2p-1}}{[(p-1)!]^2} (\|A\|_{2p}^{4p} + 2 \|A\|_{4p}^{4p}) + \calO(k^{2p-2}).
\end{align*}
All the constants in $\calO(\cdot)$ involve only $p$, $\|A\|_{2p}$, and $\|A\|_{4p}$, so they are symmetric functions in singular values of $A$.
Moreover, \Cref{lem:degreeofk} guarantees
\begin{equation*}
\sum_{r=2}^p F_r = \calO(k^{2p-2})
\end{equation*}
and the constants in $\calO(\cdot)$ are again symmetric functions in singular values of $A$ since each $\E[f_{i_1, \ldots, j_p}]$ is. 
Combining all the above, the variance of $\KV$ becomes 
\begin{align*}
\mathrm{Var}(\KV) &= \binom{k}{p}^{-2} \mathbb{E}[\X^2] - \|A\|_{2p}^{4p} \\
&= \binom{k}{p}^{-2} \left[ F_0 + F_1 + \sum_{r=2}^p F_r - \binom{k}{p}^2 \|A\|_{2p}^{4p} \right] \\
&= \frac{(p!)^2}{k^2 (k-1)^2 \cdots (k-p+1)^2} \left[ \frac{k^{2p} - p(2p-1)k^{2p-1}}{(p!)^2} \|A\|_{2p}^{4p} \right. \\[5pt]
&\left. \qquad + \frac{k^{2p-1}}{[(p-1)!]^2} (\|A\|_{2p}^{4p} + 2 \|A\|_{4p}^{4p}) - \frac{k^{2p} - p(p-1)k^{2p-1}}{(p!)^2} \|A\|_{2p}^{4p} + \calO(k^{2p-2}) \right]  \\[5pt]
&= \frac{2 p^2 \|A\|_{2p}^{4p}}{k} + \calO\left ( \frac{1}{k^2} \right ),
\end{align*}
where all the constants inside $\calO(\cdot)$ are symmetric functions of singular values of $A$.
\end{proof}

\subsubsection{Terms with at least 2 repeated indices}\label{sec:2-more-indices}
To obtain an upper bound for $\var(\KV)$, we need to further consider \rev{$2p$-tuples $(i_1, \ldots, j_p) \in \mathcal I_r$ with } 
%$i_1 < \cdots < i_p$ and $j_1 < \cdots < j_p$ with $r$ repeated indices and
$r \geq 2$.
Motivated by the decomposition in \eqref{eq:f-to-sum-Y}, we essentially want to count or bound the number of tuples $(t_1, \ldots, s_p) \in [n]^{2p}$ such that $\Y$ has nonzero expectation. 
To better visualize the product
\begin{equation} \label{eq:Y-prod}
\Y = \prod_{h=1}^p \sigma_{t_h}^2 \omega_{t_h,i_h} \omega_{t_h,i_{h+1}} \sigma_{s_h}^2 \omega_{s_h,j_h} \omega_{s_h,j _{h+1}}
\end{equation}
for some fixed $i_1, \ldots, i_p,j_1, \ldots, j_p$, 
let us consider two circles with $p$ points each, corresponding to $i_1, \ldots, i_p$ and $j_1, \ldots, j_p$, respectively. This is illustrated in Figure~\ref{fig:circles1} for $p=8$. The $r$ repeated indices are denoted with an extra circle; in Figure~\ref{fig:circles1} we have 3 repeated indices, that is, $i_1 = j_2$, $i_2 = j_4$, and $i_5 = j_8$. Note that such pairings between repeated indices are unique since $i_1 < \cdots < i_p$ and $j_1 < \cdots < j_p$.
Fixing the indices $i_1, \ldots, j_p$, a single choice of $(t_1, \ldots, s_p) \in [n]^{2p}$ for $\Y$ corresponds to assigning singular value $\sigma_{t_h}$ to the segment $(i_h, i_{h+1})$, and assigning singular value $\sigma_{s_h}$ to the segment $(j_h, j_{h+1})$.

\begin{figure}[htb]
\centering
\begin{tikzpicture}
% Define the circle radius
\def\radius{2cm}
% Define the colors for the arcs
\definecolor{color1}{HTML}{FF0000} % Red
\definecolor{color2}{HTML}{FFA500} % Orange
\definecolor{color3}{HTML}{FFFF00} % Yellow
\definecolor{color4}{HTML}{00FF00} % Green
\definecolor{color5}{HTML}{00FFFF} % Cyan
\definecolor{color6}{HTML}{0000FF} % Blue
\definecolor{color7}{HTML}{800080} % Purple
\definecolor{color8}{HTML}{FFC0CB} % Pink
% Draw the circle
\draw (0,0) circle (\radius);
% Draw and label the points
\foreach \i in {1,...,8} {
    \coordinate (P\i) at (\i*45-45: \radius);
    \fill (P\i) circle (2pt);
    \node[anchor=\i*45-45+180] at (P\i) {$i_{\i}$};
}
\draw (P1) circle (4pt);
\draw (P2) circle (4pt);
\draw (P5) circle (4pt);
% Draw the colored arcs
\draw[very thick, color1] (P1) arc[start angle=0, end angle=45, radius=\radius];
\draw[very thick, color2] (P2) arc[start angle=45, end angle=180, radius=\radius];
\draw[very thick, color4] (P5) arc[start angle=180, end angle=360, radius=\radius];
\end{tikzpicture}
\begin{tikzpicture}
% Define the circle radius
\def\radius{2cm}
% Define the colors for the arcs
\definecolor{color1}{HTML}{FF0000} % Red
\definecolor{color2}{HTML}{FFA500} % Orange
\definecolor{color3}{HTML}{FFFF00} % Yellow
\definecolor{color4}{HTML}{00FF00} % Green
\definecolor{color5}{HTML}{00FFFF} % Cyan
\definecolor{color6}{HTML}{0000FF} % Blue
\definecolor{color7}{HTML}{800080} % Purple
\definecolor{color8}{HTML}{FFC0CB} % Pink
% Draw the circle
\draw (0,0) circle (\radius);
% Draw and label the points
\foreach \i in {1,...,8} {
    \coordinate (P\i) at (\i*45-45: \radius);
    \fill (P\i) circle (2pt);
    \node[anchor=\i*45-45+180] at (P\i) {$j_{\i}$};
}
\draw (P2) circle (4pt);
\draw (P4) circle (4pt);
\draw (P8) circle (4pt);
% Draw the colored arcs
\draw[very thick, color3] (P2) arc[start angle=45, end angle=135, radius=\radius];
\draw[very thick, color5] (P4) arc[start angle=135, end angle=315, radius=\radius];
\draw[very thick, color6] (P8) arc[start angle=315, end angle=405, radius=\radius];
\end{tikzpicture}
\caption{Circle notation. }
\label{fig:circles1}
\end{figure}

Let $i_h$ be a non-repeated index (corresponding to non-circled point in \Cref{fig:circles1}). 
\Cref{lem:nonrepeated-ih} asserts that the two segments $(i_{h-1}, i_{h})$ and $(i_h, i_{h+1})$ adjacent to point $i_h$ must be assigned to the same singular value; otherwise the expectation of $\Y$ will vanish.
To illustrate this fact, in \Cref{fig:circles1}, we used the same color to highlight the segments that need to be assigned to the same singular value as a necessary condition for the expectation of the term~\eqref{eq:singleterm} to be nonzero. In the following, we refer to the group of contiguous segments with the same color as an ``arc''.

Consider a \rev{$2p$-tuple $(i_1, \ldots, j_p) \in \mathcal I_r$.} 
%choice of indices $i_1 < \cdots < i_p$ and $j_1 < \cdots < j_p$ with $r$ repeated indices. 
Then, each circle has $r$ colored arcs. Moreover, there is a natural bijection from the colored arcs in $i$-circle to the color arcs in $j$-circle, where each colored arcs is mapped to the colored arc in the other circle with the same indices as endpoints.
In our example, the pairings are $(i_1,i_2)$--$(j_2,j_4)$, $(i_2,i_5)$--$(j_4,j_8)$, and $(i_5,i_1)$--$(j_8,j_2)$, where each arc is intended in a counter-clockwise direction. We denote the arcs by $a_1^{(i)}, \ldots, a_r^{(i)}, a_1^{(j)}, \ldots, a_r^{(j)}$ and we will say that $a_\ell^{(i)}$ and $a_\ell^{(j)}$ are \emph{corresponding pairs of arcs} for $\ell = 1, \ldots, r$.

The next lemma limits the possible singular values assignment to $2r$ arcs in the two circles and is crucial for the proofs of \Cref{lemma:r2} and \Cref{lemma:rlarger}.

\begin{lemma} \label{lem:obs}
Fix a \rev{$2p$-tuple $(i_1, \ldots, j_p) \in \mathcal I_r$} 
%choice of $i_1 < \cdots < i_p$ and $j_1 < \cdots < j_p$ with $r \geq 2$ repeated indices 
and a $2p$-tuple $(t_1, \ldots, s_p) \in [n]^{2p}$ such that $\E[\Y]$ is nonzero. Consider the visualization for $\Y$ as in \Cref{fig:circles1}.
Suppose the arcs $a_{1}^{(i)}$ and $a_{2}^{(i)}$ in the $i$-circle are assigned to singular values $\sigma_g$ and $\sigma_h$, respectively, with $g \neq h$.
Then there are only two possible singular values assignments to their corresponding arcs $a_{1}^{(j)}$ and $a_{2}^{(j)}$ in the $j$-circle:
\begin{enumerate}[(1)]
\item\label{item:case1} $a_{1}^{(j)}$ is assigned to $\sigma_g$ and $a_{2}^{(j)}$ is assigned to $\sigma_h$. 
Moreover, if this is the case, then for each $\ell = 1, \ldots, r$, the singular values assigned to corresponding pair of arcs $a_{\ell}^{(i)}$ and $a_{\ell}^{(j)}$ must be the same. (Singular values assigned to different pairs could be different.)

\item\label{item:case2} $a_{1}^{(j)}$ is assigned to $\sigma_h$ and $a_{2}^{(j)}$ is assigned to $\sigma_g$. 
Moreover, if this is the case, then every arc $a_{\ell}^{(i)}$ in $i$-circle must be assigned to either $\sigma_g$ or $\sigma_h$; while the corresponding arc $a_{\ell}^{j}$ in $j$-circle must be assigned to the other singular value. That is, no singular values other than $\sigma_g$ and $\sigma_h$ can be present in this case.
\end{enumerate}
\end{lemma}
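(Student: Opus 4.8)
The statement concerns a fixed nonzero term $\E[\Y]$, which (by the reasoning behind Lemma~\ref{lem:nonrepeated-ih}) means every $\omega$-variable in the product~\eqref{eq:omega-prod} appears with even exponent. The plan is to track this parity constraint arc by arc. Recall that a single choice of $(t_1, \ldots, s_p)$ assigns one singular-value index to each segment, and that the non-repeated indices force all segments within a given colored arc to share the same index, so the assignment is really an assignment of indices to the $2r$ arcs $a_1^{(i)}, \ldots, a_r^{(i)}, a_1^{(j)}, \ldots, a_r^{(j)}$. Each endpoint of an arc is one of the $r$ repeated indices; the key observation is that a repeated index $c = i_a = j_b$ is the common endpoint of exactly two $i$-arcs (the one ending at $i_a$ and the one starting at $i_a$) and exactly two $j$-arcs, and the column-$c$ variables $\omega_{\bullet, c}$ that appear are indexed by the singular-value indices assigned to precisely those four arcs. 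The even-exponent condition at column $c$ therefore says: among the (at most four, counted with multiplicity) indices assigned to these four arcs, each value occurs an even number of times.

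First I would set up this bookkeeping precisely: for each repeated index $c$, let $p_1, p_2$ be the indices assigned to its two incident $i$-arcs and $q_1, q_2$ the indices assigned to its two incident $j$-arcs. The parity condition at column $c$ is that the multiset $\{p_1, p_2, q_1, q_2\}$ has all even multiplicities, i.e., it is either all four equal, or splits into two equal pairs. Now start from the hypothesis that $a_1^{(i)}$ and $a_2^{(i)}$ carry $\sigma_g$ and $\sigma_h$ with $g \neq h$. I would consider the repeated index $c$ that is the shared endpoint of $a_1^{(i)}$ and $a_2^{(i)}$ (if the two arcs are not adjacent in the $i$-circle, one first propagates along the intervening arcs — but since we only need a local two-arc statement, it's cleanest to first reduce to the case where $a_1^{(i)}, a_2^{(i)}$ are the two arcs meeting at a common repeated endpoint, or alternatively argue directly at each repeated index). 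At that common endpoint the two incident $i$-arcs carry $\{g, h\}$, so the two incident $j$-arcs must also be assignable so that the combined multiset $\{g, h, q_1, q_2\}$ has even multiplicities — forcing $\{q_1, q_2\} = \{g, h\}$. That is exactly the dichotomy: either ($a_1^{(j)} \mapsto \sigma_g$, $a_2^{(j)} \mapsto \sigma_h$) or ($a_1^{(j)} \mapsto \sigma_h$, $a_2^{(j)} \mapsto \sigma_g$), matching cases~\eqref{item:case1} and~\eqref{item:case2}.

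The remaining work is to propagate this local choice to all the other arcs. In case~\eqref{item:case1}: I claim each corresponding pair $a_\ell^{(i)}, a_\ell^{(j)}$ carries the same index. Here I would argue by "walking" around the circles: start from a pair known to be matched (e.g. the ones containing the common endpoint above), and move to an adjacent arc sharing a repeated endpoint $c'$. At $c'$, two incident arcs (one $i$, one $j$) are already known to be equal, say both $\sigma_x$; the parity of column $c'$ then forces the other incident $i$-arc and $j$-arc to carry equal indices as well. Since the union of all arcs forms a connected structure through the repeated indices (each repeated index links an $i$-arc step to a $j$-arc step), this propagates to every pair — giving \eqref{item:case1}. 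In case~\eqref{item:case2}: the same walk, started from a pair that is "swapped" ($a_1^{(i)} \mapsto \sigma_g$ while $a_1^{(j)} \mapsto \sigma_h$), shows that at each repeated endpoint the parity condition can only be satisfied using the two values $\{\sigma_g, \sigma_h\}$, alternating so that the $i$-arc and its corresponding $j$-arc always carry opposite values and no third singular value ever enters — giving \eqref{item:case2}.

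\textbf{Main obstacle.} The delicate point is the propagation/connectivity argument: making precise that the $2r$ arcs, linked through shared repeated-index endpoints, form a single connected chain (or a small number of components, each of which must be internally consistent), so that fixing the behavior on one pair of arcs determines it everywhere. One has to handle carefully the combinatorics of how arcs are glued at repeated indices — in particular that at each repeated index exactly two $i$-arcs and two $j$-arcs meet, and that the "same index" vs. "swapped" status is forced to be locally consistent and hence globally consistent along the chain. I would organize this as a short induction on the number of arcs already determined, using at each step the even-multiplicity condition of a single column shared by one new $i$-arc and one new $j$-arc; everything else (the case split into (1)/(2), the conclusion that only $\sigma_g,\sigma_h$ appear in case (2)) then follows routinely from the column-parity conditions.
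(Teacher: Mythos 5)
Your proposal is correct and follows essentially the same route as the paper's proof: you derive the dichotomy from the even-exponent condition on the four $\omega$-variables in the column of the common repeated endpoint of $a_1^{(i)},a_2^{(i)}$ (your multiset-parity condition is just a clean restatement of the paper's "second copy of $\omega_{g,i_2^\ast}$ and $\omega_{h,i_2^\ast}$" argument), and then propagate by induction around the circles through successive repeated indices, exactly as the paper does in cases (1) and (2). The connectivity point you flag as delicate is unproblematic because the repeated values appear in the same cyclic order on both circles (both index sequences are increasing), so the arcs form a single cycle and the paper likewise dispatches the propagation with a one-line induction.
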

\begin{proof}
Let $i^{\ast}_2$ denote the common repeated index (common endpoint) for arcs $a_{1}^{(i)}$ and $a_{2}^{(i)}$. Note that $i^{\ast}_2$ is also the common endpoint of $a_{1}^{(j)}$ and $a_{2}^{(j)}$.
Since $a_{1}^{(i)}$ is assigned to $\sigma_g$ and $a_{2}^{(i)}$ is assigned to $\sigma_h$, the product of the $\omega$-variables in \eqref{eq:Y-prod} contains $\omega_{g, i_2^{\ast}}$ and $\omega_{h, i_2^{\ast}}$, which are different random variables since $g \neq h$. 
To have nonzero $\E[\Y]$, there must be a second copy of both $\omega_{g, i_2^{\ast}}$ and $\omega_{h, i_2^{\ast}}$, and they must come from the segments joining the common endpoint of $a_{1}^{(j)}$ and $a_{2}^{(j)}$ in $j$-circle since the repeated index $i_2^{\ast}$ does not appear in the endpoint of any other segment in two circles. This means that one of the arc $a_{1}^{(j)}$ and $a_{2}^{(j)}$ has to be assigned to $\sigma_g$ and the other has to be assigned to $\sigma_h$, which proves the fact that there are only two possible singular values assignment to $a_{1}^{(j)}$ and $a_{2}^{(j)}$ in $j$-circle.

Now, consider case \ref{item:case1} and case \ref{item:case2} separately. Denote $i_3^{\ast}$ as the repeated index commonly appear at the endpoints of $a_{2}^{(i)}$, $a_2^{(j)}$, $a_{3}^{(i)}$, and $a_3^{(j)}$.
\begin{enumerate}[(1)]
\item In this case, both $a_{2}^{(i)}$ and $a_2^{(j)}$ are assigned to $\sigma_h$. Such assignment contributes $\omega_{h,i_3^{\ast}}^2$ in the product of \eqref{eq:Y-prod}. To guarantee that the other two $\omega_{\bullet, i_3^{\ast}}$ from endpoint $i_3^{\ast}$ of $a_{3}^{(i)}$ and $a_3^{(j)}$ coincide (so that $\E[\Y]$ is nonzero), the corresponding pair of arcs $a_{3}^{(i)}$ and $a_3^{(j)}$ must be assigned to the same singular value. Inductively, one can prove that, for every $\ell = 1, \ldots, r$, the corresponding arcs $a_{\ell}^{(i)}$ and $a_{\ell}^{(j)}$ must be assigned to the same singular value.

\item In this case, $a_{2}^{(i)}$ is assigned to $\sigma_h$ and $a_2^{(j)}$ is assigned to $\sigma_g$. This assignment contributes $\omega_{h,i_3^{\ast}} \omega_{g,i_3^{\ast}}$ in the product of \eqref{eq:Y-prod}. To guarantee both $\omega_{h,i_3^{\ast}}$ and $\omega_{h,i_3^{\ast}}$ have even exponent in the product of \eqref{eq:Y-prod}, one of the arc $a_{3}^{(i)}$ and $a_{3}^{(j)}$ has to be assigned to $\sigma_g$ and the other has to be assigned to $\sigma_h$. The proof is complete by induction. \qedhere
\end{enumerate}
\end{proof}

Let us introduce one more notation which will be used later in our proofs. We assume each segment in the circles of \Cref{fig:circles1} has unit length.
Denote $d_{\ell}^{(i)}$ the length of arc $a_{\ell}^{(i)}$ and $d_{\ell}^{(j)}$ the length of arc $a_{\ell}^{(j)}$, for $\ell=1,\ldots,r$. All lengths are at least $1$ and we have
\begin{equation*}
\sum_{\ell=1}^r d_{\ell}^{(i)} = \sum_{\ell=1}^r d_{\ell}^{(j)} = p.
\end{equation*}
Our analysis will also use the following inequality between products of Schatten norms.

\begin{lemma}\label{lemma:inequalities}
Let $2 \le c \le d$ be two integers. Then $\|A\|_c^c \|A\|_d^d \le \|A\|_{c-1}^{c-1} \|A\|_{d+1}^{d+1}$.
\end{lemma}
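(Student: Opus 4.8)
The claim $\|A\|_c^c \|A\|_d^d \le \|A\|_{c-1}^{c-1} \|A\|_{d+1}^{d+1}$ for integers $2 \le c \le d$ is, after writing everything in terms of the singular values $\sigma_1 \ge \sigma_2 \ge \cdots \ge 0$ of $A$, an inequality between the sums $\left(\sum_i \sigma_i^c\right)\left(\sum_j \sigma_j^d\right)$ and $\left(\sum_i \sigma_i^{c-1}\right)\left(\sum_j \sigma_j^{d+1}\right)$. The plan is to expand both products as double sums over pairs $(i,j)$ and compare them term by term after symmetrizing in $i \leftrightarrow j$. So first I would write
\[
\|A\|_{c-1}^{c-1}\|A\|_{d+1}^{d+1} - \|A\|_c^c\|A\|_d^d = \sum_{i,j} \left( \sigma_i^{c-1}\sigma_j^{d+1} - \sigma_i^c \sigma_j^d \right) = \sum_{i,j} \sigma_i^{c-1}\sigma_j^{d}\left( \sigma_j - \sigma_i \right).
\]
Then I would pair up the $(i,j)$ term with the $(j,i)$ term: their combined contribution is $\sigma_i^{c-1}\sigma_j^{c-1}(\sigma_i\sigma_j)^{?}\cdots$ — more precisely, grouping $(i,j)$ with $(j,i)$ gives
\[
\sigma_i^{c-1}\sigma_j^{d}(\sigma_j-\sigma_i) + \sigma_j^{c-1}\sigma_i^{d}(\sigma_i-\sigma_j) = (\sigma_j - \sigma_i)\left( \sigma_i^{c-1}\sigma_j^{d} - \sigma_j^{c-1}\sigma_i^{d} \right) = (\sigma_j-\sigma_i)\,\sigma_i^{c-1}\sigma_j^{c-1}\left( \sigma_j^{d-c+1} - \sigma_i^{d-c+1} \right).
\]

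The key observation is then that $d - c + 1 \ge 1$ (since $c \le d$), so the function $x \mapsto x^{d-c+1}$ is nondecreasing on $[0,\infty)$; hence $(\sigma_j - \sigma_i)$ and $(\sigma_j^{d-c+1} - \sigma_i^{d-c+1})$ always have the same sign, making each paired term nonnegative. The diagonal terms $i = j$ vanish identically. Summing over all unordered pairs shows the whole difference is $\ge 0$, which is exactly the desired inequality. I should double-check that I am using the hypothesis correctly: $c \ge 2$ guarantees $c - 1 \ge 1$ so that $\sigma_i^{c-1}$ is a genuine (nonnegative) power and no issues arise at $\sigma_i = 0$, while $c \le d$ is what makes the exponent $d-c+1$ nonnegative; both hypotheses are used.

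There is no real obstacle here — the only thing to be careful about is the bookkeeping in the pairing argument (treating $i=j$ separately and not double-counting), and the elementary monotonicity fact that $x \mapsto x^{t}$ is nondecreasing on $[0,\infty)$ for $t \ge 0$, which handles the borderline case $c = d$ (exponent $t=1$) as well as $c < d$. An alternative, essentially equivalent route would be to invoke the Chebyshev sum inequality / rearrangement, noting that $(\sigma_i^{c-1})_i$ and $(\sigma_i^{d-c+1})_i$ are similarly ordered (both nonincreasing in $i$), so $\sum_i \sigma_i^{c-1}\sigma_i^{d-c+1} \cdot n \le \cdots$; but the direct term-by-term pairing above is cleaner and self-contained, so that is the version I would write up.
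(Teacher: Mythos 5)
Your proof is correct and follows essentially the same route as the paper's: both reduce the claim to the symmetrized two-term inequality $\sigma_i^{c-1}\sigma_j^{d+1}+\sigma_i^{d+1}\sigma_j^{c-1}-\sigma_i^{c}\sigma_j^{d}-\sigma_i^{d}\sigma_j^{c}\ge 0$ via the factorization $\sigma_i^{c-1}\sigma_j^{c-1}(\sigma_j-\sigma_i)\bigl(\sigma_j^{d-c+1}-\sigma_i^{d-c+1}\bigr)$ and the observation that the two factors involving $\sigma_j-\sigma_i$ share the same sign since $d-c+1\ge 0$. Nothing further is needed.
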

\begin{proof}
    We need to show that, for any sequence of nonnegative integers $\sigma_1, \ldots, \sigma_n$, 
    \begin{equation*}
        \sum \sigma_i^c \sum \sigma_j^d \le \sum \sigma_i^{c-1} \sum \sigma_j^{d+1}.
    \end{equation*}
    Note that there is a bijection between the terms $\sigma_i^c\sigma_j^d$ on the left-hand-side and the terms $\sigma_i^{c-1} \sigma_j^{d+1}$ on the right-hand-side. Therefore, it is sufficient to show that, for all $i, j \in \{1, \ldots, n\}$, we have
    \begin{equation}\label{eq:wts1}
        \sigma_i^c\sigma_j^d + \sigma_i^d \sigma_j^c \le \sigma_i^{c-1}\sigma_j^{d+1} + \sigma_i^{d+1}\sigma_j^{c-1}.
    \end{equation}
    We have that
    \begin{equation}\label{eq:factorization}
        \sigma_i^{c-1}\sigma_j^{d+1} - \sigma_i^c\sigma_j^d - \sigma_i^d \sigma_j^c + \sigma_i^{d+1}\sigma_j^{c-1} = \sigma_i^{c-1}\sigma_j^{c-1} (\sigma_j - \sigma_i)\left ( \sigma_j^{d-c+1} - \sigma_i^{d-c+1}\right ).
    \end{equation}
    Since $d\ge c$, we also have that $d-c+1 \ge 0$, therefore $\sigma_j^{d-c+1} - \sigma_i^{d-c+1}$ has the same sign as $\sigma_j - \sigma_i$. Hence,~\eqref{eq:factorization} is always nonnegative, which implies~\eqref{eq:wts1} by rearranging the terms, and therefore implies the result of the lemma.
\end{proof}

Now, we are ready to bound $\E[f_{i_1, \ldots, j_p}]$ when \rev{the $2p$-tuple $(i_1,\ldots,j_p) \in \mathcal I_r$ with $r \ge 2$.} %there are two or more repeated indices among $i_1 < \cdots < i_p$ and $j_1 < \cdots < j_p$. 

\begin{lemma}\label{lemma:r2}
\rev{If $(i_1,\ldots,j_p) \in \mathcal I_2$}, then $$\mathbb{E}[f_{i_1,\ldots,j_p}] \le 6\|A\|_{4p}^{4p} + 3\|A\|_4^4\|A\|_{4p-4}^{4p-4}.$$
\end{lemma}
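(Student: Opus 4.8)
The plan is to follow the circle picture from Figure~\ref{fig:circles1} with $r=2$: each of the two circles is split into exactly two arcs, $a_1^{(i)}, a_2^{(i)}$ on the $i$-circle and $a_1^{(j)}, a_2^{(j)}$ on the $j$-circle, with corresponding pairs $(a_1^{(i)},a_1^{(j)})$ and $(a_2^{(i)},a_2^{(j)})$. A choice of $(t_1,\dots,s_p)\in[n]^{2p}$ with $\E[\Y]\neq0$ amounts to assigning a singular value to each arc (Lemma~\ref{lem:nonrepeated-ih} forces all segments within an arc to share the same index). I would first split into two cases according to whether the two $i$-arcs get the same or different singular values.

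First, the case where $a_1^{(i)}$ and $a_2^{(i)}$ are assigned the same singular value $\sigma_g$. Then the whole $i$-circle carries the single index $g$, contributing $\sigma_g^{2p}$; I would then argue (again via Lemma~\ref{lem:nonrepeated-ih} applied at the two repeated indices, which are now the only vertices where the pattern could change) that the $j$-circle must likewise be monochromatic, say with index $\sigma_h$, contributing $\sigma_h^{2p}$. If $g\neq h$ all $\omega$-variables are squares, giving $\sum_{g\neq h}\sigma_g^{2p}\sigma_h^{2p}\le \|A\|_{2p}^{4p}$; if $g=h$ every $\omega_{g,\bullet}$ at a repeated index appears to the fourth power while the rest are squares, yielding $\sum_g 3^2\sigma_g^{4p}=9\|A\|_{4p}^{4p}$ (two repeated indices, hence two factors of $3$). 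So this case contributes at most $\|A\|_{2p}^{4p}-\|A\|_{4p}^{4p}+9\|A\|_{4p}^{4p}$, but I'd keep it in the sharper combined form below.

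Next, the case $g\neq h$ for the two $i$-arcs. Here Lemma~\ref{lem:obs} is the workhorse: the corresponding $j$-arcs must be assigned $\{\sigma_g,\sigma_h\}$ in one of the two orders. In the "matching" order ($a_\ell^{(j)}$ gets the same index as $a_\ell^{(i)}$ for both $\ell$), the product has, at each of the two repeated indices, the variable $\omega_{g,\cdot}^2$ resp.\ $\omega_{h,\cdot}^2$ and two fourth powers overall, but since $g\neq h$ these are distinct variables and the expectation is $3\cdot3\,\sigma_g^{2d_1}\sigma_h^{2d_2}$ where $d_1+d_2=p$ are the arc lengths; summing over $g\neq h$ and over the arc-length split gives a bound involving $\|A\|_{2d_1}^{2d_1}\|A\|_{2d_2}^{2d_2}$, which by Lemma~\ref{lemma:inequalities} is maximized at the extreme split $d_1=1$, $d_2=p-1$ (or symmetrically), producing the term $\|A\|_4^4\|A\|_{4p-4}^{4p-4}$ with a constant I'd track carefully. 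In the "swapped" order, Lemma~\ref{lem:obs}\ref{item:case2} forces the whole configuration to use only $\sigma_g$ and $\sigma_h$, and one computes the resulting moment (all variables squared since the colors alternate at every repeated index), contributing $\sum_{g\neq h}\sigma_g^{2p}\sigma_h^{2p}$-type terms again.

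Finally I would collect the three contributions, use Lemma~\ref{lemma:inequalities} repeatedly to replace every mixed product $\|A\|_{2a}^{2a}\|A\|_{2b}^{2b}$ ($a+b=2p$, $a\ge2$) by either $\|A\|_{4p}^{4p}$ (when one factor degenerates, $a=0$ or $b=0$, not applicable here) or by the extremal $\|A\|_4^4\|A\|_{4p-4}^{4p-4}$, and similarly bound $\|A\|_{2p}^{4p}\le\|A\|_{4p}^{4p}\cdot(\text{something})$ — actually here I'd instead note $\sum_{g\neq h}\sigma_g^{2p}\sigma_h^{2p}\le \|A\|_{2p}^{4p}$ and keep $\|A\|_{2p}^{4p}$, but since the statement only has $\|A\|_{4p}^{4p}$ and $\|A\|_4^4\|A\|_{4p-4}^{4p-4}$, I'd use the power-mean inequality $\|A\|_{2p}^{4p}\le\|A\|_{4p}^{4p}$ (for $p\ge1$, since $\|A\|_{2p}\le\|A\|_{4p}\cdot$... wait, that's backwards) — the hard part. \textbf{The main obstacle} is precisely the bookkeeping of constants and making sure every term of the final count is dominated by one of the two allowed Schatten-norm products with total constant $6$ and $3$ respectively: one must carefully enumerate how many $(t,s)$-assignments fall in each pattern, track the factors of $3$ from fourth moments, and invoke Lemma~\ref{lemma:inequalities} in the right direction to consolidate $\|A\|_{2d_1}^{2d_1}\|A\|_{2d_2}^{2d_2}$ into $\|A\|_4^4\|A\|_{4p-4}^{4p-4}$ and to absorb the $\sum_{g\ne h}\sigma_g^{2p}\sigma_h^{2p}$ terms into $\|A\|_{4p}^{4p}$ via $\sum_{g\ne h}\sigma_g^{2p}\sigma_h^{2p}\le\big(\sum_g\sigma_g^{2p}\big)^2$ and the crude relation between $\|A\|_{2p}$ and $\|A\|_{4p}$ that holds once the $r\ge2$ combinatorial prefactor is taken into account — so the delicate point is that the bound $6\|A\|_{4p}^{4p}+3\|A\|_4^4\|A\|_{4p-4}^{4p-4}$ is only claimed for the \emph{per-tuple} expectation $\E[f_{i_1,\dots,j_p}]$, and one has to check the constant $6$ really does cover all the monochromatic and swapped contributions simultaneously.
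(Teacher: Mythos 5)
Your case decomposition is the same as the paper's: split according to whether the two $i$-arcs share a color, note that a monochromatic $i$-circle forces a monochromatic $j$-circle, and use \Cref{lem:obs} to reduce the bichromatic case to the ``matching'' and ``swapped'' assignments. However, there are two genuine problems. First, the factor $3\cdot 3$ you attach to the matching-order case with $g\neq h$ is wrong: at a repeated vertex $i^{\ast}$ joining arcs of colors $g\neq h$, the four incident $\omega$-variables are $\omega_{g,i^{\ast}}^2\omega_{h,i^{\ast}}^2$, i.e.\ two distinct variables each squared, so the expectation carries coefficient $1$. Fourth moments (and hence factors of $3$) arise only in the fully monochromatic configuration, which gives $9\|A\|_{4p}^{4p}$. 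Carrying a factor of $9$ on the mixed term would make the target coefficient $3$ unreachable. (Also, the exponents in that case are $2(d_1^{(i)}+d_1^{(j)})$ and $2(d_2^{(i)}+d_2^{(j)})$, summing to $4p$, not $2p$ as you wrote.)

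Second, the step you flag as ``the main obstacle'' is left unresolved, and your proposed fixes do not work: $\|A\|_{2p}^{4p}\le\|A\|_{4p}^{4p}$ is false (Schatten norms decrease in the index, so the inequality goes the other way), and a per-tuple bound cannot borrow anything from the combinatorial prefactor $\binom{k}{2p-2}\binom{2p-2}{p}\binom{p}{2}$. The actual resolution is that all three bichromatic contributions are off-diagonal sums $\sum_{t\neq s}\sigma_t^{2a}\sigma_s^{2b}=\|A\|_{2a}^{2a}\|A\|_{2b}^{2b}-\|A\|_{4p}^{4p}$ with $a+b=2p$ and $a,b\ge 2$ (the monochromatic-on-each-circle case being $a=b=p$). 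The three subtracted diagonal terms reduce $9\|A\|_{4p}^{4p}$ to $6\|A\|_{4p}^{4p}$, and each product $\|A\|_{2a}^{2a}\|A\|_{2b}^{2b}$ --- including $\|A\|_{2p}^{2p}\|A\|_{2p}^{2p}=\|A\|_{2p}^{4p}$ --- is bounded by $\|A\|_4^4\|A\|_{4p-4}^{4p-4}$ by iterating \Cref{lemma:inequalities}, which spreads the exponents apart and terminates at $4$ precisely because $a,b\ge 2$. That yields $6\|A\|_{4p}^{4p}+3\|A\|_4^4\|A\|_{4p-4}^{4p-4}$.
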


\begin{proof}
    Since there are two repeated indices, there are two arcs for each circle. Using our previous notation, they are $a_1^{(i)}$, $a_2^{(i)}$, $a_1^{(j)}$, and $a_2^{(j)}$. 
    By \Cref{lem:obs}, the nonzero expectation $\E[\Y]$ on the right-hand-side of \eqref{eq:f-to-sum-Y} corresponds to one of the following situations. 
    \begin{itemize}
        \item All arcs have the same singular value:
        \begin{equation*}
            \mathbb{E}[\Y] = \mathbb{E} \left [ \sigma_{t_1}^{2p} \sigma_{t_1}^{2p} \prod_{h=1}^p \omega_{t_1,i_h}^2 \omega_{t_1,j_h}^2 \right ] = 9 \sigma_{t_1}^{4p},
        \end{equation*}
        where the factor $9$ comes from the fact that two indices are repeated and therefore there are two fourth powers of i.i.d. $N(0,1)$ random variables.

        \item The arcs in the $i$-circle have one singular value $\sigma_{t_1}$ and the arcs in the $j$-circle have another singular value $\sigma_{s_1}$:
        \begin{equation*}
            \mathbb{E}[\Y] = \mathbb{E} \left [ \sigma_{t_1}^{2p} \sigma_{s_1}^{2p} \prod_{h=1}^p \omega_{t_1,i_h}^2 \omega_{s_1,j_h}^2 \right ] = \sigma_{t_1}^{2p} \sigma_{s_1}^{2p}.
        \end{equation*}

        \item Arcs $a_1^{(i)}$ and $a_1^{(j)}$ have the same singular value $\sigma_{t_1}$, and arcs $a_2^{(i)}$ and $a_2^{(j)}$ have the same singular value $\sigma_{t_2}$, with $t_1 \neq t_2$:
        \begin{equation*}
            \mathbb{E}[\Y] = \mathbb{E} \left [ \sigma_{t_1}^{2(d_1^{(i)} + d_1^{(j)})} \sigma_{t_2}^{2(d_2^{(i)} + d_2^{(j)})} \prod_{h=1}^p \omega_{t_1,i_h}^2 \omega_{s_1,j_h}^2 \right ] = \sigma_{t_1}^{2(d_1^{(i)} + d_1^{(j)})} \sigma_{t_2}^{2(d_2^{(i)} + d_2^{(j)})}.
        \end{equation*}

        \item Arcs $a_1^{(i)}$ and $a_2^{(j)}$ have the same singular value $\sigma_{t_1}$, and arcs $a_2^{(i)}$ and $a_1^{(j)}$ have the same singular value $\sigma_{t_2}$, with $t_1 \neq t_2$:
        \begin{equation*}
            \mathbb{E}[\Y] = \mathbb{E} \left [ \sigma_{t_1}^{2(d_1^{(i)} + d_2^{(j)})} \sigma_{t_2}^{2(d_2^{(i)} + d_1^{(j)})} \prod_{h=1}^p \omega_{t_1,i_h}^2 \omega_{s_1,j_h}^2 \right ] = \sigma_{t_1}^{2(d_1^{(i)} + d_2^{(j)})} \sigma_{t_2}^{2(d_2^{(i)} + d_1^{(j)})}.
        \end{equation*}
    \end{itemize}
    Now we need to sum up all these contributions:
    \begin{align*}
        \mathbb{E}[f_{i_1,\ldots,j_p}] & = 9\sum_{t=1}^n \sigma_t^{4p} + \sum_{t \neq s} \left ( \sigma_t^{2p} \sigma_s^{2p} + \sigma_{t}^{2(d_1^{(i)} + d_1^{(j)})} \sigma_{s}^{2(d_2^{(i)} + d_2^{(j)})} + \sigma_{t}^{2(d_1^{(i)} + d_2^{(j)})} \sigma_{s}^{2(d_2^{(i)} + d_1^{(j)})}\right )\\
        & = 9 \|A\|_{4p}^{4p} + \left ( \|A\|_{2p}^{4p} + \|A\|_{2(d_1^{(i)} + d_1^{(j)})}^{2(d_1^{(i)} + d_1^{(j)})} + \|A\|_{2(d_1^{(i)} + d_2^{(j)})}^{2(d_1^{(i)} + d_2^{(j)})} - 3\|A\|_{4p}^{4p}\right )\\
        & \le 6\|A\|_{4p}^{4p} + 3 \|A\|_4^4 \|A\|_{4p-4}^{4p-4}  ,
    \end{align*}
    where we used Lemma~\ref{lemma:inequalities} (repeatedly) for the inequality at the end, since $2(d_1^{(i)} + d_1^{(j)}) \ge 4$ and $2(d_1^{(i)} + d_2^{(j)}) \ge 4$.
\end{proof}

\begin{lemma}\label{lemma:rlarger}
\rev{If $(i_1, \ldots, j_p) \in \mathcal I_r$ for some $r > 2$, }
%there are $r > 2$ repeated indices among $i_1 < \cdots < i_p$ and $j_1 < \cdots < j_p$,
then $$\mathbb{E}[f_{i_1,\ldots,j_p}] \le \sum_{\ell=1}^r c(r, \ell) \|A\|_{4}^{4(\ell-1)} \|A\|_{4p-4(\ell-1)}^{4p-4(\ell-1)}.$$
where
\begin{equation}\label{eq:defc}
        c(r,\ell) = \begin{cases}
            3^r & \text{if } \ell=1;\\
            3^{r-2}(2^{r+1}-1) & \text{if } \ell = 2;\\
            3^{r-\ell} \ell^r &\text{otherwise.}
        \end{cases}
    \end{equation}
\end{lemma}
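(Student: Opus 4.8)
The plan is to enumerate, for a fixed $2p$-tuple $(i_1,\dots,j_p)\in\mathcal I_r$, all the singular-value assignments $(t_1,\dots,s_p)\in[n]^{2p}$ for which $\E[\Y]\neq 0$, grouped by the number $\ell$ of distinct singular values that actually occur, and then add up the resulting contributions to $\E[f_{i_1,\dots,j_p}]$. By \Cref{lem:nonrepeated-ih}, such an assignment is the same as a coloring of the $2r$ arcs $a_1^{(i)},\dots,a_r^{(i)},a_1^{(j)},\dots,a_r^{(j)}$ by singular values. I would first argue, using \Cref{lem:obs}, that every admissible coloring is of exactly one of three kinds: (a) \emph{degenerate}, all $2r$ arcs carrying one value ($\ell=1$); (b) \emph{matching}, where $a_m^{(i)}$ and $a_m^{(j)}$ carry the same value for every $m$ and at least two values occur ($2\le\ell\le r$); or (c) \emph{complementary}, where exactly two values $\sigma_g\neq\sigma_h$ occur and, at every $m$, the pair $a_m^{(i)},a_m^{(j)}$ receives the two different values ($\ell=2$). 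Indeed, if two cyclically adjacent arcs of the $i$-circle carry different values, \Cref{lem:obs} yields precisely the dichotomy (b)/(c); and if no such pair exists, then all $i$-arcs carry one value, which by the parity of the $\omega$-moments at the repeated indices forces $\ell\le 2$, landing in (a) or (c). In particular, any admissible coloring with $\ell\ge 3$ is of type (b).

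Next I would evaluate $\E[\Y]$ within each kind. In case (a) the four $\omega$-variables meeting at each of the $r$ repeated indices combine into a fourth power of a single Gaussian, so $\E[\Y]=3^{r}\sigma_t^{4p}$ and the total degenerate contribution is $3^{r}\sum_t\sigma_t^{4p}=c(r,1)\|A\|_{4p}^{4p}$. A matching coloring is described by a set partition $\pi$ of the arc-pairs $\{1,\dots,r\}$ into $\ell$ blocks (arc-pairs in the same block get the same value) together with an injective assignment of distinct values to blocks; a short local computation at the repeated indices gives $\E[\Y]=3^{z(\pi)}\prod_b\sigma_{v_b}^{2e_b}$, where $z(\pi)$ is the number of repeated indices whose two incident arc-pairs lie in the same block of $\pi$, $v_b$ is the value of block $b$, and $e_b=\sum_{m\in b}(d_m^{(i)}+d_m^{(j)})$, so that $e_b\ge 2$ and $\sum_b e_b=2p$. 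Summing over the choices of the $v_b$ and using $\sum_t\sigma_t^{2e_b}=\|A\|_{2e_b}^{2e_b}$, then repeatedly applying \Cref{lemma:inequalities} to ``spread'' the $\ell$ exponents (all $\ge 4$, summing to $4p$) down to $(4,\dots,4,4p-4(\ell-1))$ — which only increases the product — bounds the contribution of $\pi$ by $3^{z(\pi)}\|A\|_4^{4(\ell-1)}\|A\|_{4p-4(\ell-1)}^{4p-4(\ell-1)}$. For $\ell\ge 2$ any $\ell$-coloring of a cycle of length $r$ has at least $\ell$ color changes, so $z(\pi)\le r-\ell$; since there are $S(r,\ell)\le\ell^{r}$ partitions with $\ell$ blocks, the matching contribution with exactly $\ell$ values is at most $3^{r-\ell}\ell^{r}\|A\|_4^{4(\ell-1)}\|A\|_{4p-4(\ell-1)}^{4p-4(\ell-1)}=c(r,\ell)\|A\|_4^{4(\ell-1)}\|A\|_{4p-4(\ell-1)}^{4p-4(\ell-1)}$, which is exactly the $\ell\ge 3$ part of the stated bound.

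It then remains to handle the two contributions with $\ell=2$ — matching colorings with two blocks and complementary colorings — and to show their sum is at most $c(r,2)\|A\|_4^4\|A\|_{4p-4}^{4p-4}$. The first is at most $S(r,2)\,3^{r-2}\|A\|_4^4\|A\|_{4p-4}^{4p-4}=(2^{r-1}-1)3^{r-2}\|A\|_4^4\|A\|_{4p-4}^{4p-4}$ by the previous paragraph. For the complementary type one checks that every one of the $2^{r}$ ways to decide which of $\sigma_g,\sigma_h$ each $i$-circle arc gets yields $\E[\Y]=\sigma_g^{2e_g}\sigma_h^{2e_h}$ with no extra factor of $3$ (the $\omega$'s at each repeated index then form $\omega_{g,\cdot}^2\omega_{h,\cdot}^2$), and that $e_g,e_h\ge 2$ with $e_g+e_h=2p$, so $\|A\|_{2e_g}^{2e_g}\|A\|_{2e_h}^{2e_h}\le\|A\|_4^4\|A\|_{4p-4}^{4p-4}$ again by \Cref{lemma:inequalities}; accounting for the two-fold overcounting under $\sigma_g\leftrightarrow\sigma_h$ this contributes at most $2^{r-1}\|A\|_4^4\|A\|_{4p-4}^{4p-4}$. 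Finally $(2^{r-1}-1)3^{r-2}+2^{r-1}\le 3^{r-2}(2^{r+1}-1)=c(r,2)$, since after dividing by $3^{r-2}$ this amounts to $3^{-1}\le 3$. Collecting the three buckets gives $\E[f_{i_1,\dots,j_p}]\le\sum_{\ell=1}^{r}c(r,\ell)\|A\|_4^{4(\ell-1)}\|A\|_{4p-4(\ell-1)}^{4p-4(\ell-1)}$, as claimed.

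The difficulty here is combinatorial bookkeeping rather than any single sharp inequality. The delicate points are: making the trichotomy (a)/(b)/(c) genuinely exhaustive and disjoint (this is where \Cref{lem:obs} does the work, together with the parity argument for the all-$i$-arcs-equal configuration); reading off the exponent $3^{z(\pi)}$ correctly from the Gaussian fourth moments at the repeated indices; and — the most fiddly step — the $\ell=2$ bucket, where the matching and complementary parts must be combined and the explicit constant $c(r,2)=3^{r-2}(2^{r+1}-1)$ pinned down using the counts $S(r,2)=2^{r-1}-1$ and $2^{r}$. One must also apply \Cref{lemma:inequalities} in the direction ``spreading exponents apart increases the product'' and check that the leftover Schatten exponent $4p-4(\ell-1)$ stays $\ge 4$, which holds because $\ell\le r\le p$.
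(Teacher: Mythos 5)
Your proof is correct and follows essentially the same route as the paper's: the arc/coloring picture, a case split on the number $\ell$ of distinct singular values justified by \Cref{lem:obs}, the bound $3^{r-\ell}$ on the Gaussian fourth-moment factor via counting adjacent same-colored arcs, the count $\le \ell^r$ of color patterns, and repeated use of \Cref{lemma:inequalities} to reach $\|A\|_4^{4(\ell-1)}\|A\|_{4p-4(\ell-1)}^{4p-4(\ell-1)}$. The only (harmless) difference is in the $\ell=2$ bucket, where your finer accounting (Stirling count $2^{r-1}-1$ for the matching type and no factor of $3$ for the complementary type) yields something slightly sharper than, but still dominated by, the paper's constant $c(r,2)=3^{r-2}(2^{r+1}-1)$.
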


\begin{proof}
It suffices to consider the terms $\Y$ with nonzero expectation.
By \Cref{lem:obs}, the maximum number of different colors in the two circles of \Cref{fig:circles1} (\ie the maximum number of different singular values in the product \eqref{eq:Y-prod}) is $r$.
We separately consider the terms $\Y$ in which there are $\ell$ distinct colors (singular values), for $1 \leq \ell \leq r$.
\begin{itemize}
    \item {\bf Case $\ell = 1$.} All the arcs in the two circles have the same color $t$. Then,
    \begin{equation*}
        \mathbb{E}[\Y] = \mathbb{E}[Y_{i_1,\ldots,j_p}^{t,\ldots,t}]= 3^r \sigma_t^{4p},
    \end{equation*}
    where the coefficient $3^r$ comes from the fact that there are $r$ repeated indices and therefore $r$ i.i.d. $N(0,1)$ random variables that are to the fourth power in $Y_{i_1,\ldots,j_p}^{t,\ldots,t}$.

    \item {\bf Case $\ell = 2$.} Without loss of generality, assume the two colors are $t_1$ and $t_2$. First of all, the maximum number of fourth powers of random variables in the product \eqref{eq:Y-prod} is $r-2$, which gives a coefficient of at most $3^{r-2}$ in front of the singular values. Due to \Cref{lem:obs}, there are only two possible cases \ref{item:case1} and \ref{item:case2}:
    \begin{enumerate}[(1)]
        \item For each $\ell = 1,\ldots,r$, the two corresponding arcs $a_{\ell}^{(i)}$ and $a_{\ell}^{(j)}$ have the same color. Moreover, not all $r$ pairs have the same color. There are $2^r-1$ color assignments of colors $t_1$ and $t_2$ that satisfy this constraint; each of them has a certain color \emph{pattern}, denoted by ``patt'', which uniquely determines the total length of arcs colored by $t_1$:
        \begin{equation} \label{eq:d-patt}
            d(\text{patt}) = \sum_{\ell \text{ such that } \atop a_{\ell}^{(i)} \text{has color }t_1} d_{\ell}^{(i)} + \sum_{\ell \text{ such that } \atop a_{\ell}^{(j)} \text{has color }t_1} d_{\ell}^{(j)}.
        \end{equation}
        Note that $2 \le d(\text{patt}) \le 2p-2$ since not all pairs of arcs have same color.
        Then, the product of \emph{singular values} in \eqref{eq:Y-prod} is
        \begin{equation} \label{eq:pattern}
            \sigma_{t_1}^{2d(\text{patt})} \sigma_{t_2}^{4p-2d(\text{patt})}.
        \end{equation}
    % Then, we will need to sum over all possible choices of $t_1$ and $t_2$.
        \item For each $\ell = 1,\ldots,r$, the two corresponding arcs $a_{\ell}^{(i)}$ and $a_{\ell}^{(j)}$ have different colors. There are $2^r$ color assignments of colors $t_1$ and $t_2$ that satisfy this constraint. 
        Each color pattern again uniquely determines a number $d(\text{patt})$ in \eqref{eq:d-patt}, and hence the the product of \emph{singular values} in \eqref{eq:Y-prod} is, again, \eqref{eq:pattern}.
    \end{enumerate}
    Then, we need to sum over all possible choices of $t_1$ and $t_2$, each of which can be anything in $\{1,\ldots, n\}$, so that
    \begin{multline*}
    \sum_{t_1,\ldots,s_p \text{ such that there are} \atop 2 \text{ distinct singular values in product \eqref{eq:Y-prod}}} \mathbb{E}[\Y]  \le 3^{r-2} \sum_{\text{valid color}\atop\text{pattern patt}}\sum_{t_1 \neq t_2} \sigma_{t_1}^{2d(\text{patt})} \sigma_{t_2}^{4p-2d(\text{patt})} \\
    \le 3^{r-2} \sum_{\text{valid color}\atop\text{pattern patt}} \|A\|_{2d(\text{patt})}^{2d(\text{patt})} \|A\|_{4p-2d(\text{patt})}^{4p-2d(\text{patt})} \le 3^{r-2} (2^{r+1}-1) \|A\|_4^4 \|A\|_{4p-4}^{4p-4}.  
    \end{multline*}
    
    \item {\bf Case $3 \leq \ell \leq r$.} 
    Denote the distinct colors by $t_1, \ldots, t_\ell$. 
    First of all, by \Cref{lem:obs}, only case \ref{item:case1} can happen: this means that, for each $\ell = 1,\ldots,r$, the two corresponding arcs $a_{\ell}^{(i)}$ and $a_{\ell}^{(j)}$ have the same color.

    We claim that the maximum number of fourth power of standard Gaussian random variables in product \eqref{eq:Y-prod} is $r - \ell$. To see this, notice that we get a fourth power of the random variable $\omega_{\bullet, i^{\ast}}$ where $i^{\ast}$ is a repeated index (circled point) if and only if the two contiguous arcs joined by $i^{\ast}$ have the same color in both circles.
    For $r$ arcs in the circle with $\ell$ distinct colors, there are at most $r-\ell+1$ adjacent arcs that can have the same color. Hence, there are at most $r - \ell$ fourth power of standard Gaussian. 
    This contributes to a coefficient at most $3^{r-\ell}$ in front of singular values in $\E[\Y]$.
    
    Since we are under \Cref{lem:obs}, case \ref{item:case1}, the color of $a_{\ell}^{(i)}$ uniquely determines the color of $a_{\ell}^{(j)}$. It therefore suffices to determine the color pattern for $i$-circle. 
    A valid color pattern needs to have all $\ell$ different colors in $i$-cycle. The total number of valid color patterns is bounded by $\ell^r$ (which is the number of ways to assign $\ell$ colors to $r$ arcs in $i$-circle).
    
    Now, given a valid color pattern ``patt'' of colors $t_1, \ldots, t_\ell$ on $2r$ arcs in the two circles, it uniquely determines the total length of arcs colored by each $t_q$, $1 \leq q \leq \ell$,
    \begin{equation*}
        d_q(\text{patt}) = \sum_{\ell \text{ such that } \atop a_{\ell}^{(i)} \text{has color }t_q} d_{\ell}^{(i)} + \sum_{\ell \text{ such that } \atop a_{\ell}^{(j)} \text{has color }t_q} d_{\ell}^{(j)}.
    \end{equation*}
    They satisfy the relations $d_q(\text{patt}) \geq 2$ and $d_1(\text{patt}) + \cdots +  d_\ell(\text{patt}) = 2p$.
    With this color pattern, the expectation of product \eqref{eq:Y-prod} is bounded above by $3^{r-\ell} \sigma_{t_1}^{2d_1(\text{patt})} \sigma_{t_2}^{2d_2(\text{patt})}  \cdots \sigma_{t_{\ell}}^{2d_\ell(\text{patt})}$.

    Finally, summing over all possible color patterns and all possible distinct colors, we have
    \begin{align*}
        &\sum_{t_1,\ldots,s_p \text{ such that there are} \atop \ell \text{ distinct singular values in product \eqref{eq:Y-prod}}} \mathbb{E}[\Y] \\
        & \le 3^{r-\ell} \sum_{\text{valid color}\atop\text{pattern patt}} \sum_{\text{ distinct} \atop t_1, \ldots, t_{\ell}} \sigma_{t_1}^{2d_1(\text{patt})} \sigma_{t_2}^{2d_2(\text{patt})}  \cdots \sigma_{t_{\ell}}^{2d_\ell(\text{patt})} \\
        & \le 3^{r-\ell} \sum_{\text{valid color}\atop\text{pattern patt}} \prod_{q=1}^{\ell} \left( \sum_t \sigma_t^{2d_q(\text{patt})} \right) 
        = 3^{r-\ell} \sum_{\text{valid color}\atop\text{pattern patt}} \prod_{q=1}^{\ell} \|A\|_{2d_q(\text{patt})}^{2d_q(\text{patt})} \\
        & \le 3^{r-\ell} \sum_{\text{valid color}\atop\text{pattern patt}} \|A\|_4^{4(\ell-1)} \|A\|_{4p-4(\ell-1)}^{4p-4(\ell-1)} \le 3^{r-\ell} \ell^r \|A\|_{4}^{4(\ell-1)} \|A\|_{4p-4(\ell-1)}^{4p-4(\ell-1)}.
    \end{align*}
    where the second to the last inequality is obtained by repeatedly applying \Cref{lemma:inequalities}.
\end{itemize}    
Summing all the contributions of $\E[\Y]$ from the above cases, we have
\begin{equation*}
    \mathbb{E}[f_{i_1,\ldots,j_p}] \le \sum_{\ell=1}^r c(r, \ell)%number of $2p$-tuples $i_1, \ldots, i_p, j_1, \ldots, j_p$ such that $i_1 < \cdots < i_p$, $j_1 < \cdots < j_p$, and among $i_1, \ldots, j_p$ there are $r$ repeated indices  \|A\|_{4}^{4(\ell-1)} \|A\|_{4p-4(\ell-1)}^{4p-4(\ell-1)}
\end{equation*}
for the function $c(r, \ell)$ defined in the statement of the lemma.
\end{proof}

\subsubsection{Second-order analysis}\label{sec:secondorder}
Now, we are ready to prove the bounds in \Cref{thm:secondorderbound} and \Cref{cor:secondorder}.

\begin{proof}[Proof of \Cref{thm:secondorderbound}]
Recalling~\eqref{eq:splitXsquare} and the fact that $\hat \Theta_{2p}$ is an unbiased estimator for $\binom{k}{p} \| A \|_{2p}^{2p}$ and \eqref{eq:splitXsquare}, we have
\begin{equation*}
\var(\hat \Theta_{2p}) = \E[\hat \Theta_{2p}^2] - \left(\E[\hat \Theta_{2p}] \right)^2 = F_0 + F_1 + F_2 + \sum_{r=3}^p F_r - \binom{k}{p}^2 \| A \|_{2p}^{4p}.
\end{equation*}
The desired bound in the statement of theorem immediately follows by applying \Cref{lemma:rrepeated,lemma:r0,lemma:r1,lemma:r2,lemma:rlarger} to obtain
\begin{align*}
\E[\hat \Theta_{2p}^2] &= F_0 + F_1 + F_2 + \sum_{r=3}^p F_r \\
& \le \binom{k}{2p} \binom{2p}{p} \|A\|_{2p}^{4p}
+ \binom{k}{2p-1}\binom{2p-1}{p} p \left ( \|A\|_{2p}^{4p} + 2 \|A\|_{4p}^{4p} \right ) \\
&\quad + \binom{k}{2p-2}\binom{2p-2}{p}\binom{p}{2} \left ( 6 \|A\|_{4p}^{4p} + 3 \|A\|_4^4\|A\|_{4p-4}^{4p-4}\right ) \\
&\quad + \sum_{r=3}^p \binom{k}{2p-r}\binom{2p-r}{p}\binom{p}{r} \sum_{\ell = 1}^r c(r,\ell) \|A\|_4^{4(\ell-1)} \|A\|_{4p-4(\ell-1)}^{4p-4(\ell-1)}. \qedhere
\end{align*}
\end{proof}

\begin{proof}[Proof of \Cref{cor:secondorder}]
Note that $\var(\KV) = \binom{k}{p}^{-2} \var(\hat \Theta_{2p})$.
By \Cref{thm:secondorderbound} and the fact $\binom{k}{2p-r}\binom{2p-r}{p}\binom{p}{r} = \calO(k^{2p-r})$, we have
{\small
\begin{align}
\var(\KV) & \le - \|A\|_{2p}^{4p} + \binom{k}{p}^{-2} \binom{k}{2p} \binom{2p}{p} \|A\|_{2p}^{4p} \nonumber \\
&\quad + \binom{k}{p}^{-2} \binom{k}{2p-1}\binom{2p-1}{p} p \left ( \|A\|_{2p}^{4p} + 2 \|A\|_{4p}^{4p} \right ) \nonumber \\
&\quad + \binom{k}{p}^{-2} \binom{k}{2p-2}\binom{2p-2}{p}\binom{p}{2} \left ( 6 \|A\|_{4p}^{4p} + 3 \|A\|_4^4\|A\|_{4p-4}^{4p-4}\right ) + \calO \left( \frac{1}{k^3} \right). \label{eq:pf-KV-bound}
\end{align} 
}
For the second term in \eqref{eq:pf-KV-bound}, performing a second-order expansion in $1/k$, we have
\begin{align}
\binom{k}{p}^{-2} \binom{k}{2p} \binom{2p}{p} \|A\|_{2p}^{4p}
& = \frac{(k-p)(k-p-1)\cdots(k-2p+1)}{k(k-1)\cdots (k-p+1)} \|A\|_{2p}^{4p} \nonumber \\
& = \|A\|_{2p}^{4p}\prod_{i=0}^{p-1} \left ( 1 - \frac{p}{k-i} \right )  \nonumber \\
& = \left ( 1 - \frac{p^2}{k} + \frac{p^2(p-1)(p-2)}{2k^2} \right ) \|A\|_{2p}^{4p} + \mathcal O\left ( \frac{1}{k^3} \right ). \label{eq:F0-bound}
\end{align}

For the third term in \eqref{eq:pf-KV-bound}, using a first-order expansion of $\prod_{i=1}^{p-1} \left ( 1 - \frac{p-1}{k-i} \right )$, we have
\begin{align}
&\quad \binom{k}{p}^{-2} \binom{k}{2p-1}\binom{2p-1}{p} p \left ( \|A\|_{2p}^{4p} + 2 \|A\|_{4p}^{4p} \right ) \nonumber \\
&= \frac{p^2(k-p)(k-p-1) \cdots (k-2p+2)}{k(k-1)\cdots (k-p+1)} \left ( \|A\|_{2p}^{4p} + 2 \|A\|_{4p}^{4p} \right ) \nonumber \\
&= \frac{p^2}{k}    \left ( \|A\|_{2p}^{4p} + 2 \|A\|_{4p}^{4p} \right ) \prod_{i=1}^{p-1} \left ( 1 - \frac{p-1}{k-i} \right ) \nonumber \\
& =  \left (\frac{p^2}{k} - \frac{p^2(p-1)^2}{k^2} \right )   \left ( \|A\|_{2p}^{4p} + 2 \|A\|_{4p}^{4p} \right )   + \mathcal O \left ( \frac{1}{k^3} \right ).\label{eq:F1-bound}
\end{align}

For the fourth term in \eqref{eq:pf-KV-bound}, we have
\begin{align}
&\quad \binom{k}{p}^{-2} \binom{k}{2p-2}\binom{2p-2}{p}\binom{p}{2} \left ( 6 \|A\|_{4p}^{4p} + 3 \|A\|_4^4\|A\|_{4p-4}^{4p-4}\right ) \nonumber \\
&= \frac{p^2 (p-1)^2}{2k(k-1)} \frac{(k-p)(k-p-1) \cdots (k-2p+3)}{(k-2)(k-3) \cdots (k-p+1)} \left ( 6 \|A\|_{4p}^{4p} + 3 \|A\|_4^4\|A\|_{4p-4}^{4p-4}\right ) \nonumber \\
&= \frac{p^2 (p-1)^2}{2k^2} \left ( 6 \|A\|_{4p}^{4p} + 3 \|A\|_4^4\|A\|_{4p-4}^{4p-4}\right ) + \calO\left(\frac{1}{k^3}\right). \label{eq:F2-bound}
\end{align}

Finally, putting together \eqref{eq:pf-KV-bound}--\eqref{eq:F2-bound}, we have
\begin{align*}
\mathrm{Var}(\KV)
&\le - \|A\|_{2p}^{4p} + \left ( 1 - \frac{p^2}{k} + \frac{p^2(p-1)(p-2)}{2k^2}\right ) \|A\|_{2p}^{4p} \\
&\quad +  \left ( \frac{p^2}{k} - \frac{p^2(p-1)^2}{k^2} \right ) \left ( \|A\|_{2p}^{4p} + 2 \|A\|_{4p}^{4p} \right ) \\
&\quad + \frac{p^2 (p-1)^2}{2k^2} \left ( 6 \|A\|_{4p}^{4p} + 3 \|A\|_4^4\|A\|_{4p-4}^{4p-4}\right ) + \calO\left(\frac{1}{k^3}\right) \\
&\le  \frac{2p^2}{k} \|A\|_{4p}^{4p} 
+ \frac{p^2 (p-1)^2}{k^2} \left (\|A\|_{4p}^{4p} + \frac{3}{2} \|A\|_4^4\|A\|_{4p-4}^{4p-4} - \frac{1}{2}\|A\|_{2p}^{4p} \right ). \qedhere
\end{align*}
\end{proof}

\section{Numerical examples}\label{sec:examples}

We numerically illustrate the performance of our bounds and estimates on the variance of $\KV$. The numerical experiments have been performed in \rev{Python}, and all the figures in this document can be reproduced using the code available at \url{https://github.com/Alice94/schatten-norm}. Recall that, without loss of generality, we can work with diagonal matrices. In the plots below, we estimate the exact variance of $\KV$ for a fixed matrix and a fixed value of $k$ by running $\KV$ for \rev{$100000$ times if $k < 100$ and for $3000$ times if $k \ge 100$, }and computing the sample variance of the results. \rev{The sample variance} corresponds to the blue line in the figures in this section. The \rev{orange} lines (first-order estimate) correspond to Theorem~\ref{thm:firstorder}, \rev{ that is, we approximate the variance as $2p^2 \|A\|_{4p}^{4p}/k$; the green lines (second-order estimate) correspond to Theorem~\ref{cor:secondorder}, that is, we approximate the variance as
\begin{equation*}
    \frac{2p^2\|A\|_{4p}^{4p}}{k} + \frac{p^2(p-1)^2}{k^2}\left ( \|A\|_{4p}^{4p} + \frac{3}{2} \|A\|_4^4 \|A\|_{4p-4}^{4p-4} - \frac{1}{2}\|A\|_{2p}^{4p} \right );
\end{equation*}
the red dotted lines correspond to the bound in Theorem~\ref{thm:secondorderbound}, and the purple dotted lines correspond to the bound~\eqref{eq:varKV-original}.} In all our examples, we keep the matrix size quite small ($100 \times 100$) so that we are able to run the estimator many times to compute its empirical variance as precisely as possible.

\begin{example}\label{ex:0.8}
    \rev{As a first example, we consider a matrix with singular values that exhibit exponential decay. We take $A$ to be the diagonal matrix $A \in \mathbb{R}^{100 \times 100}$} with diagonal entries $0.8, 0.8^2, 0.8^3, \ldots, 0.8^{100}$. \Cref{fig:0.8} illustrates the variance of $\KV$ for $p = 4, 6, 8$ with values of $k$ ranging from \rev{$10$ to $1280$}. Note that the first-order estimate and the second-order estimates tend to underestimate the variance and get better for larger values of $k$, as expected.    
\begin{figure}
\centering
\includegraphics[scale=.2]{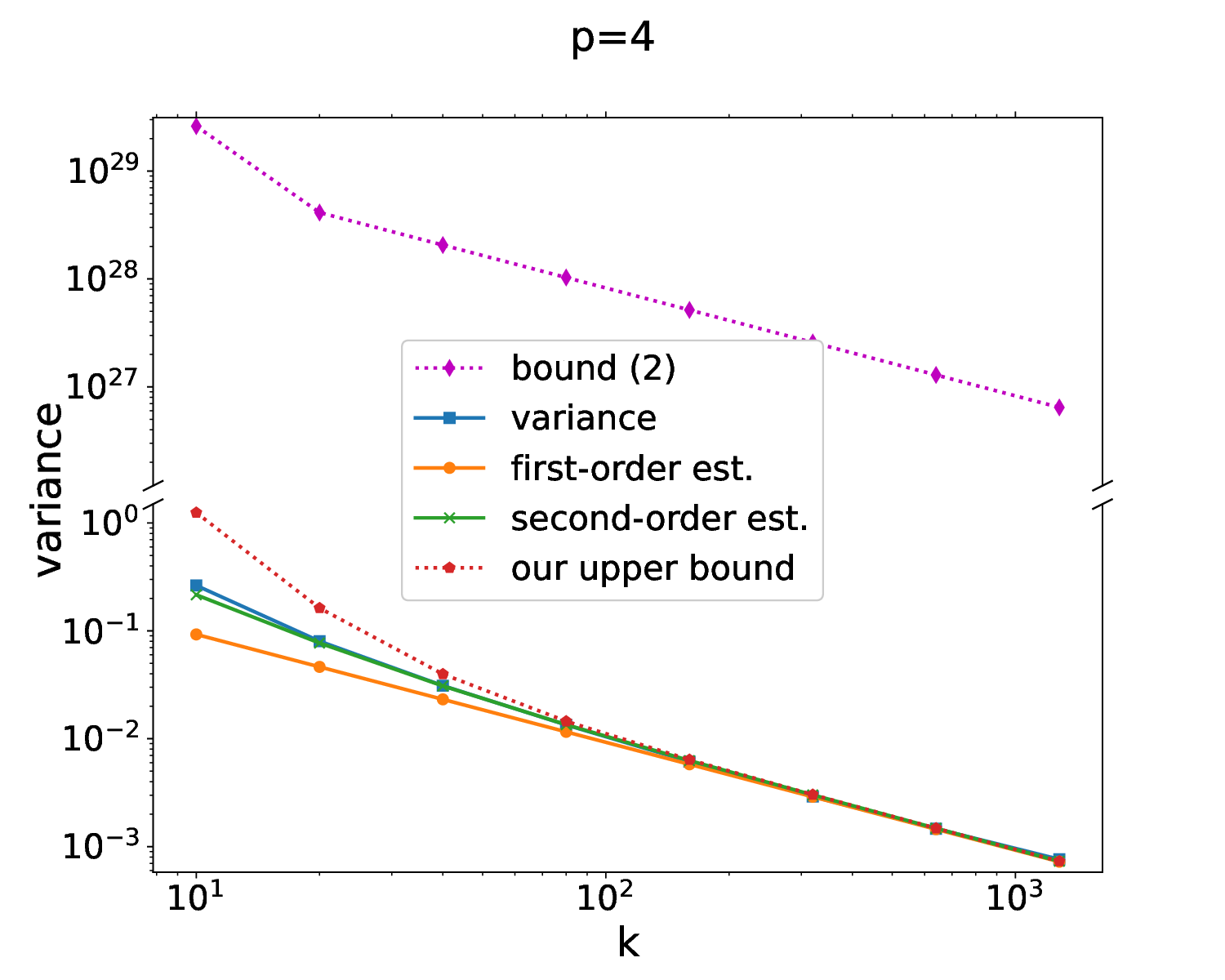}\includegraphics[scale=.2]{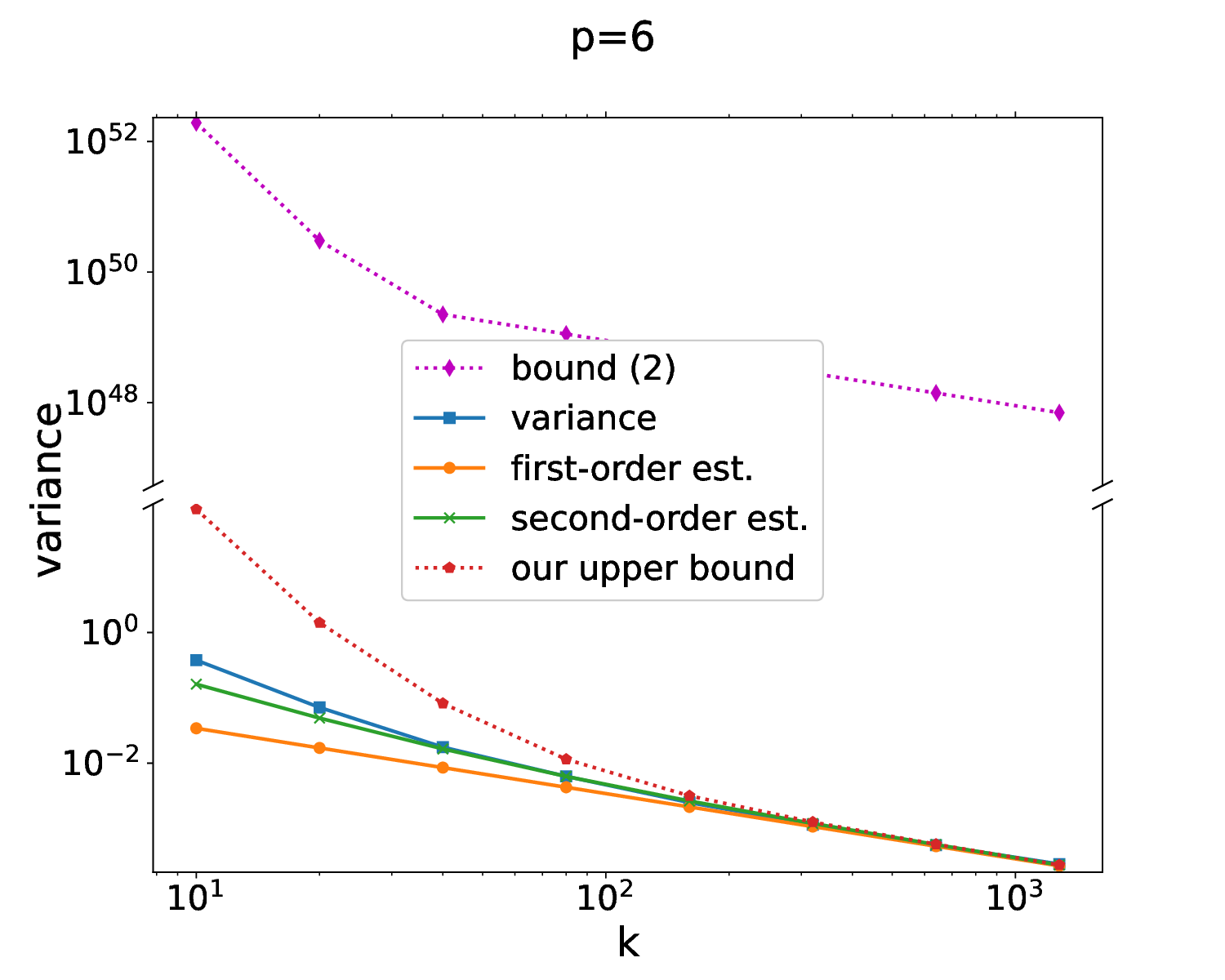}\includegraphics[scale=.2]{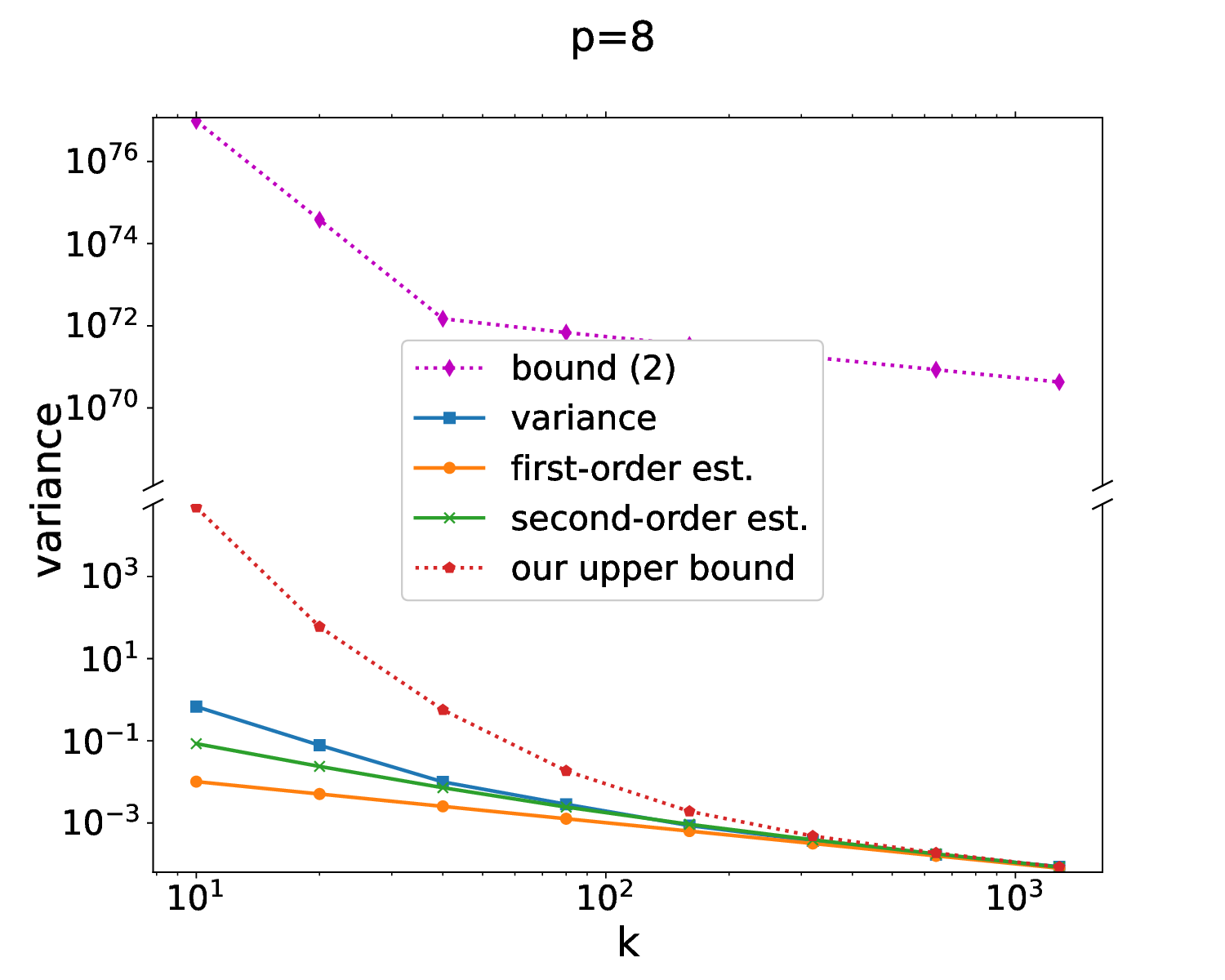}
\caption{Comparison of the variance of $\KV$, our bounds, our estimates, \rev{and~\eqref{eq:varKV-original}} for the matrix from Example~\ref{ex:0.8}.}
\label{fig:0.8}
\end{figure}
\end{example}

\begin{example}\label{ex:1overi}
\rev{We now illustrate the behavior of our bounds for matrices whose singular values exhibit algebraic decay. In Figure~\ref{fig:1overi} we consider the $100 \times 100$ diagonal matrix with 
diagonal entries $1, 1/4, 1/9, 1/16, \ldots, 1/10000$, for $p = 3, 5, 7$, and in Figure~\ref{fig:alg4} we consider the $100 \times 100$ diagonal matrix with diagonal entries $1, 1/2^4, 1/3^4, \ldots, 1/100^4$ for $p=2,6,10$.} 

\begin{figure}[htb]
\includegraphics[scale=.2]{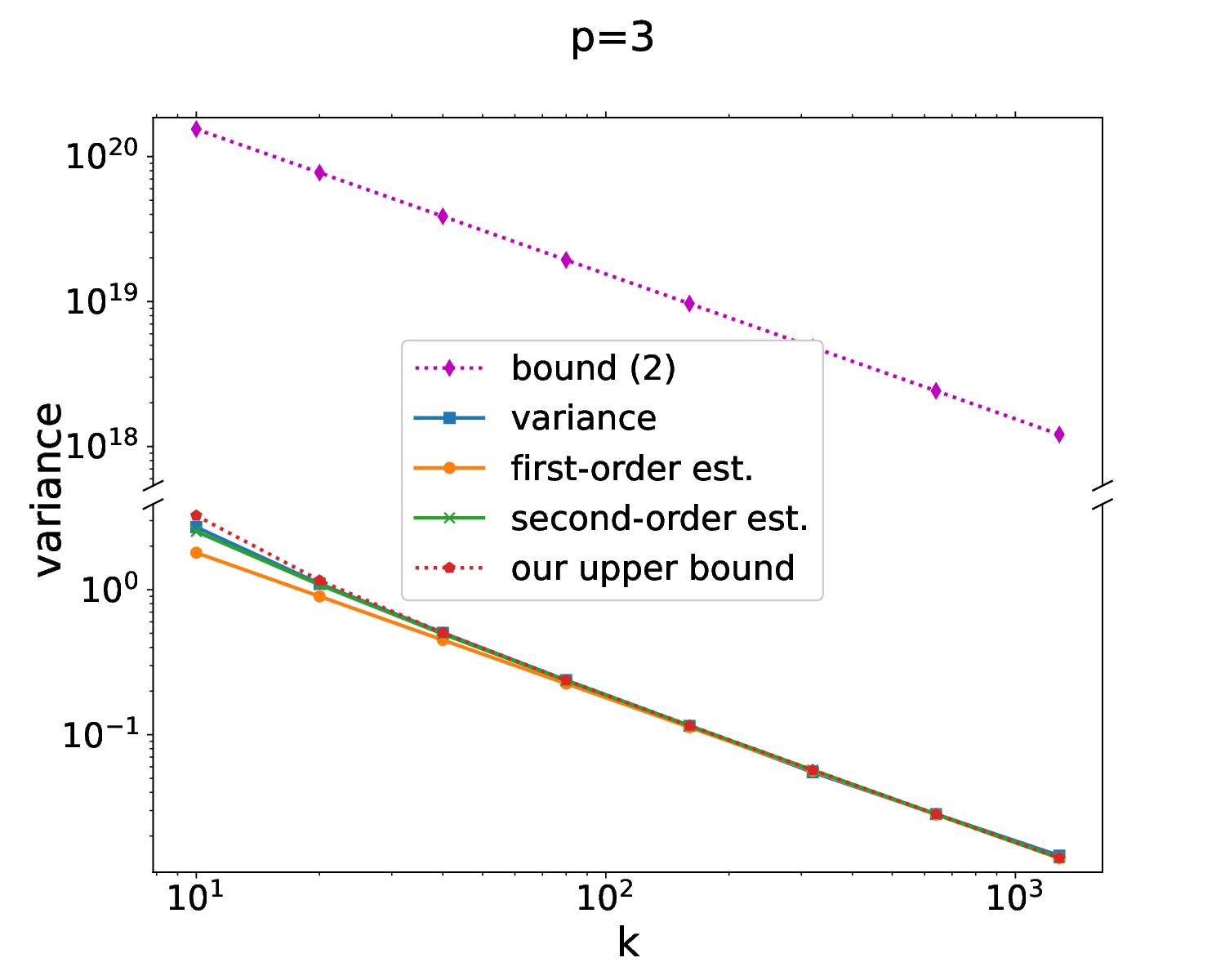}\includegraphics[scale=.2]{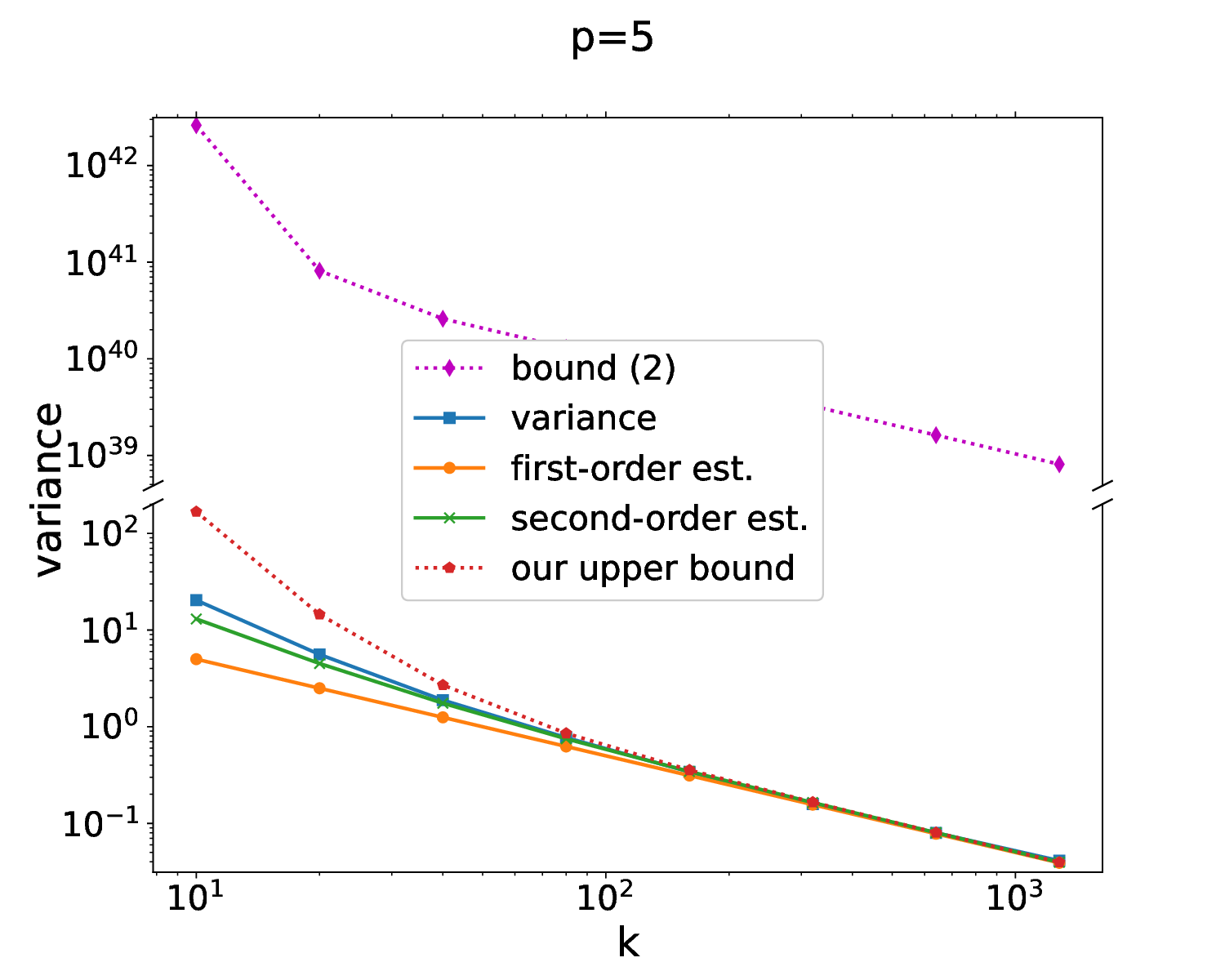}\includegraphics[scale=.2]{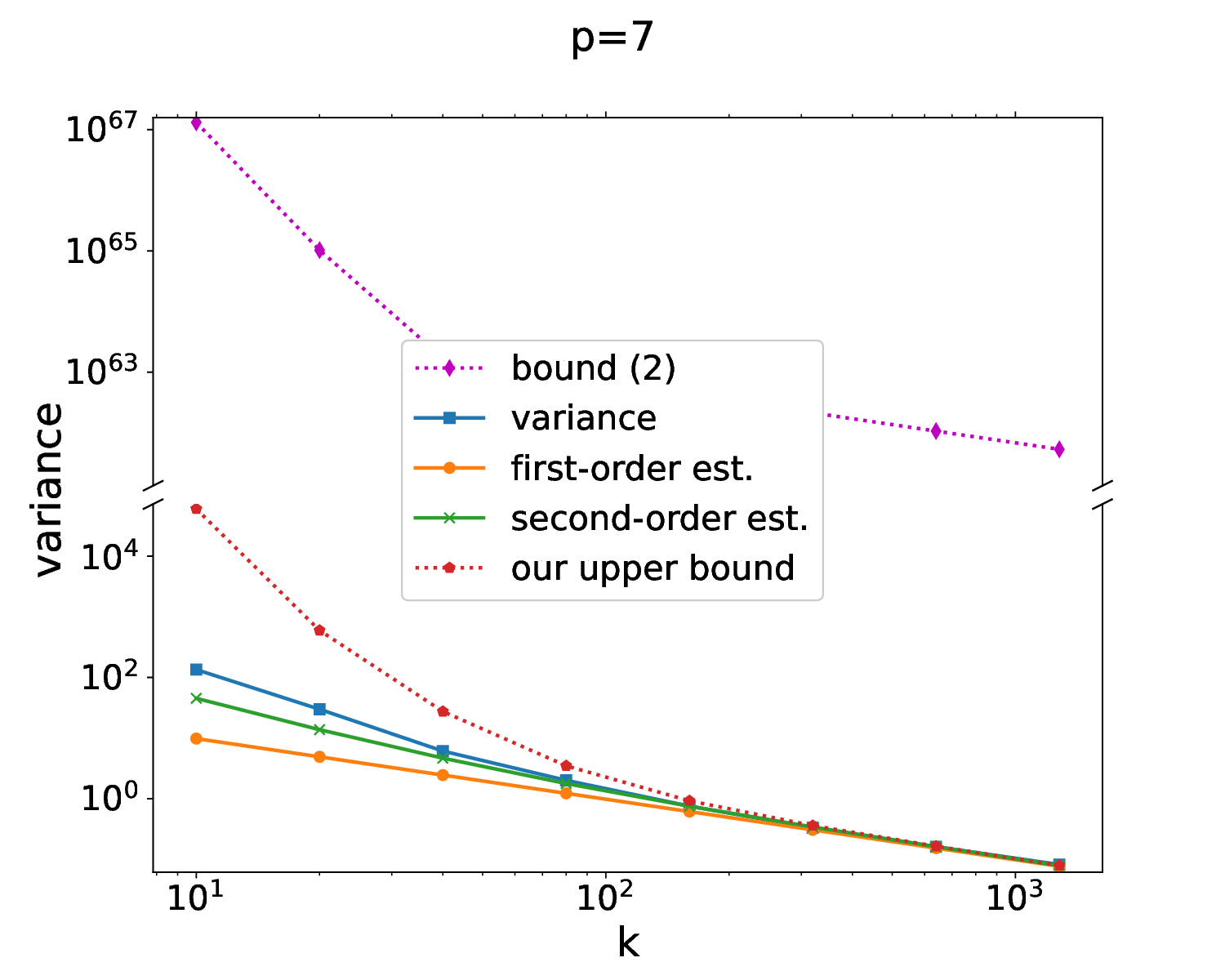}
\caption{Comparison of the normalized variance of $\KV$, our bounds, our estimates, \rev{and~\eqref{eq:varKV-original}} for the matrix from Example~\ref{ex:1overi} with diagonal entries $1, 1/4, 1/9, \ldots, 1/10000$.}
\label{fig:1overi}
\end{figure}

\begin{figure}[htb]
\includegraphics[scale=.2]{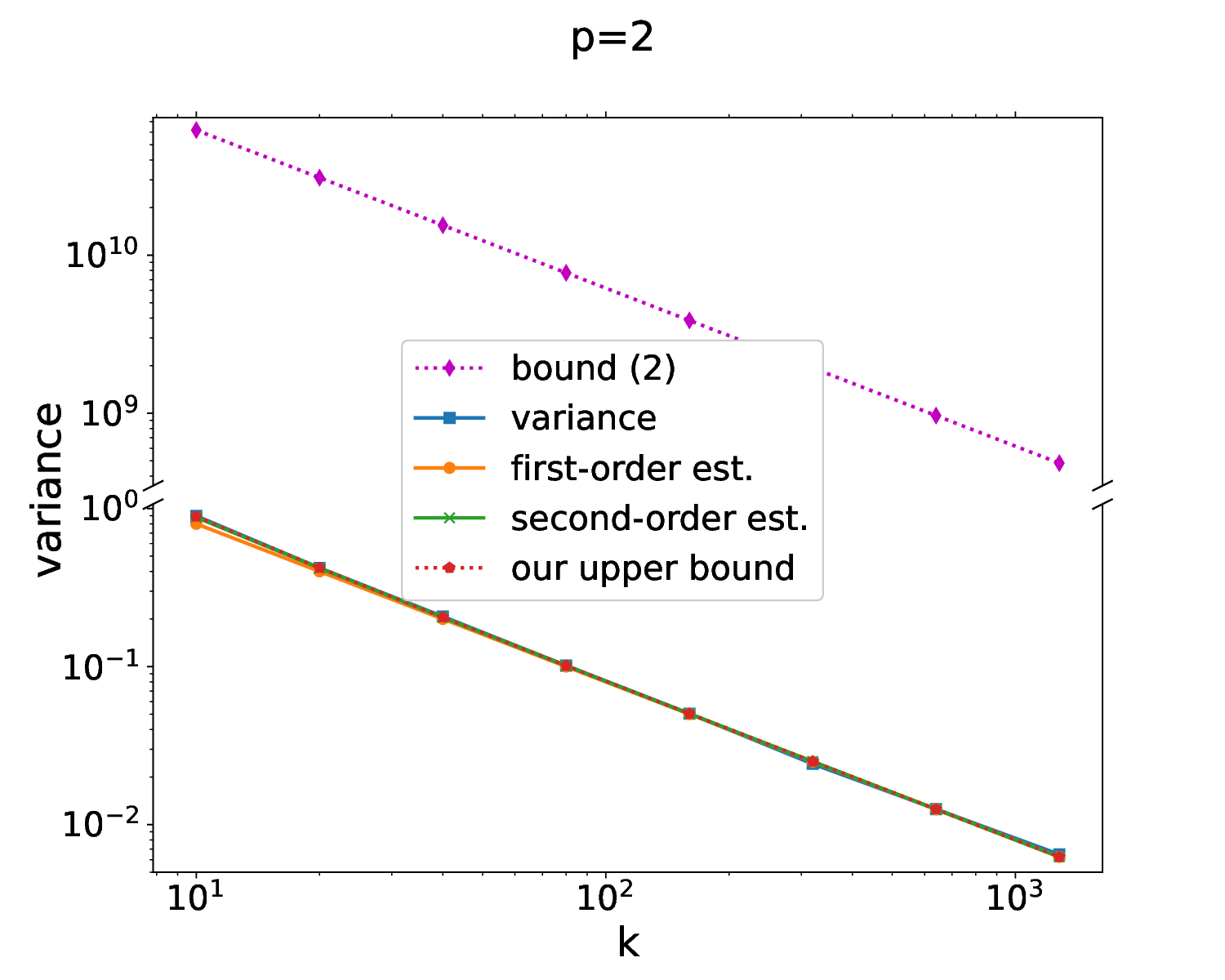}\includegraphics[scale=.2]{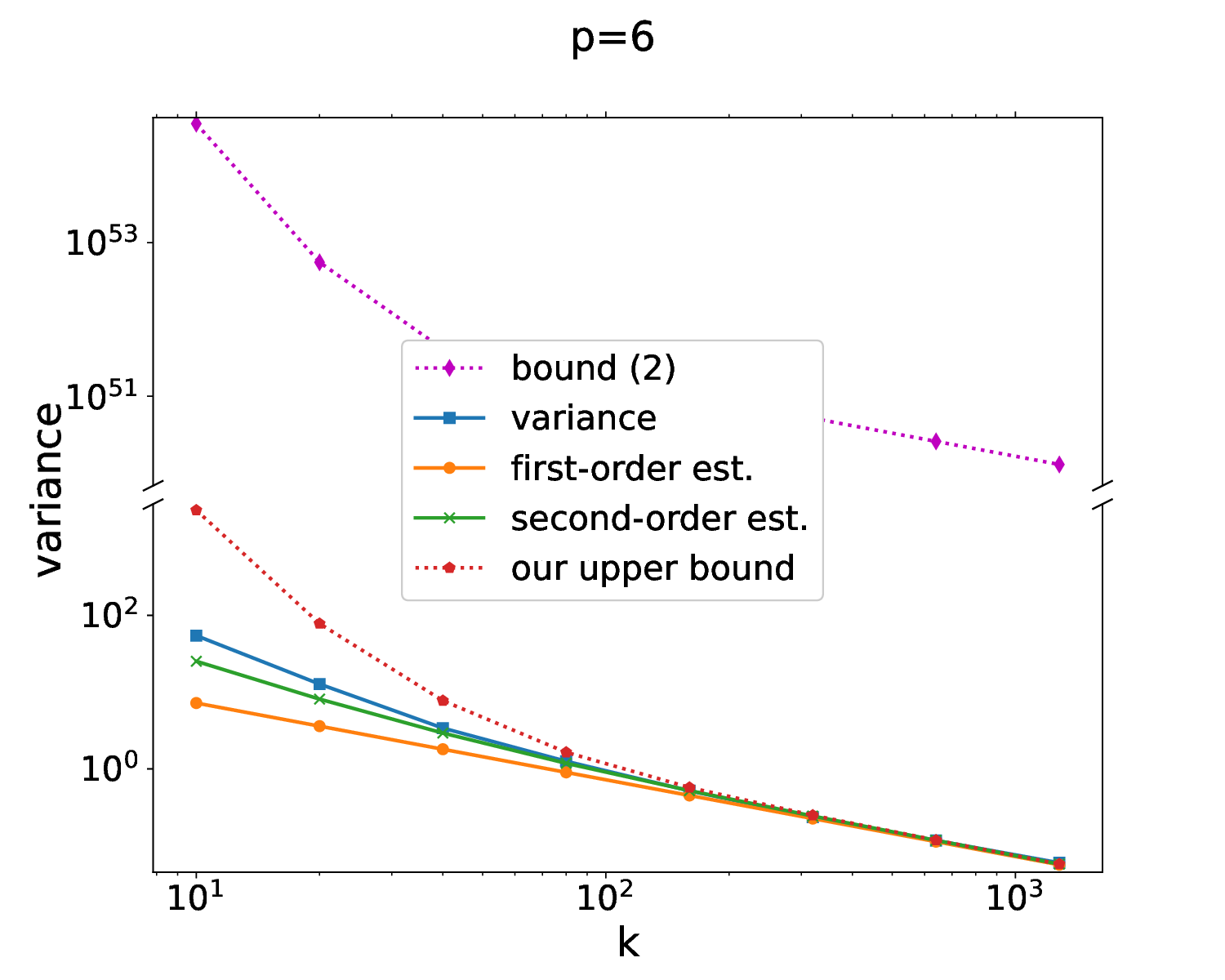}\includegraphics[scale=.2]{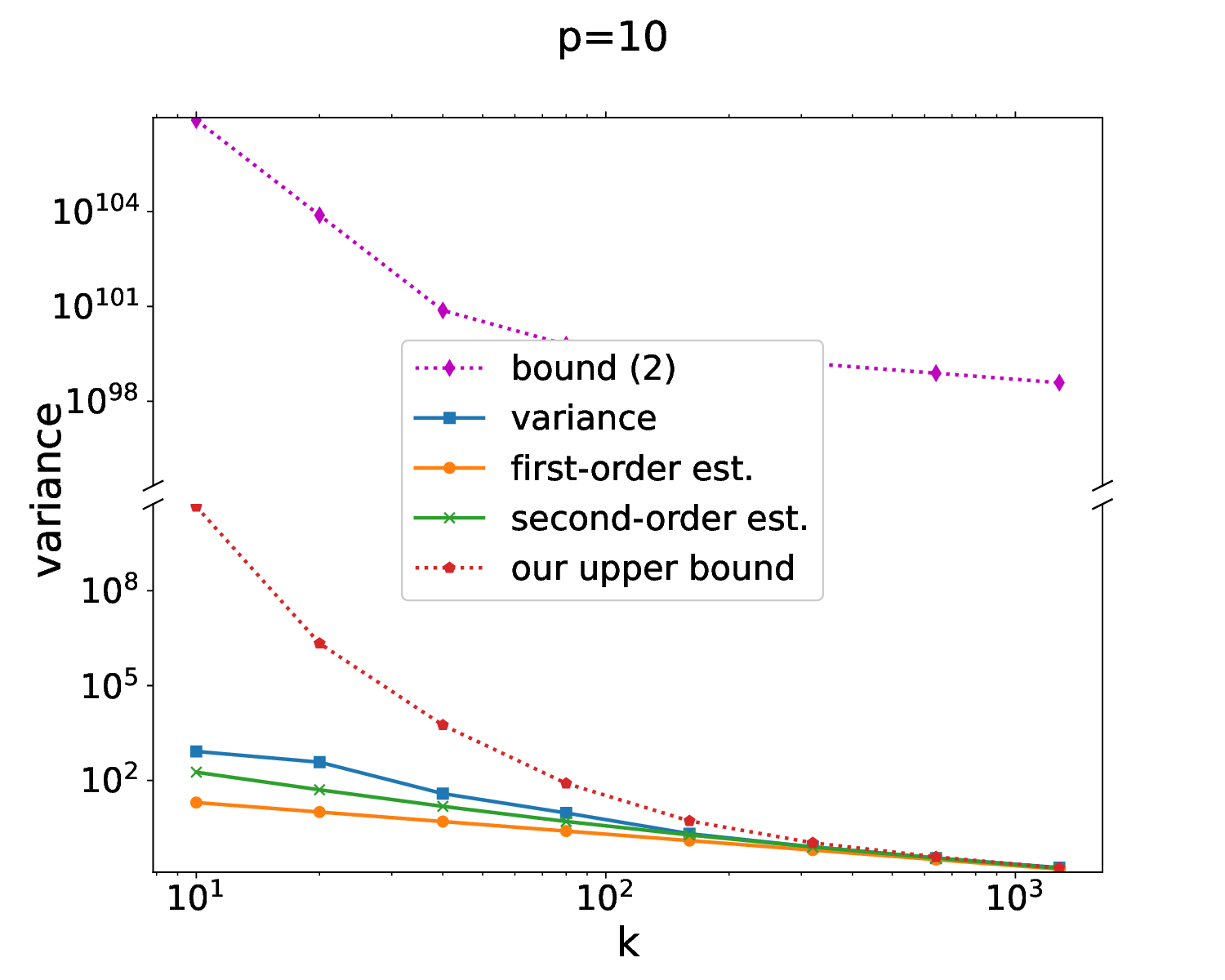}
\caption{Comparison of the variance of $\KV$, our bounds, our estimates, \rev{and~\eqref{eq:varKV-original}} for the matrix from Example~\ref{ex:1overi} with diagonal entries $1, 1/2^4, 1/3^4, \ldots, 1/100^4$.}
\label{fig:alg4}
\end{figure}
\end{example}

\begin{example}\label{ex:identity}
    The motivation for this work is to explore the behavior of $\KV$ for numerically \rev{low-}rank matrices. However, even for the matrices with no eigenvalue decay, our bound from \Cref{thm:secondorderbound} improves the existing bound \eqref{eq:varKV-original} from \cite{kong2017spectrum}. In \Cref{fig:identity}, we consider $A = I_{100}$, the identity matrix, and compare the empirical variance of $\KV$ with our estimates, our bounds, and the bound \eqref{eq:varKV-original}. Our bounds are not tight at all, especially for larger values of $p$, but are several orders of magnitude tighter than the bound \eqref{eq:varKV-original}.

\begin{figure}[htb]
\centering
\includegraphics[scale=.2]{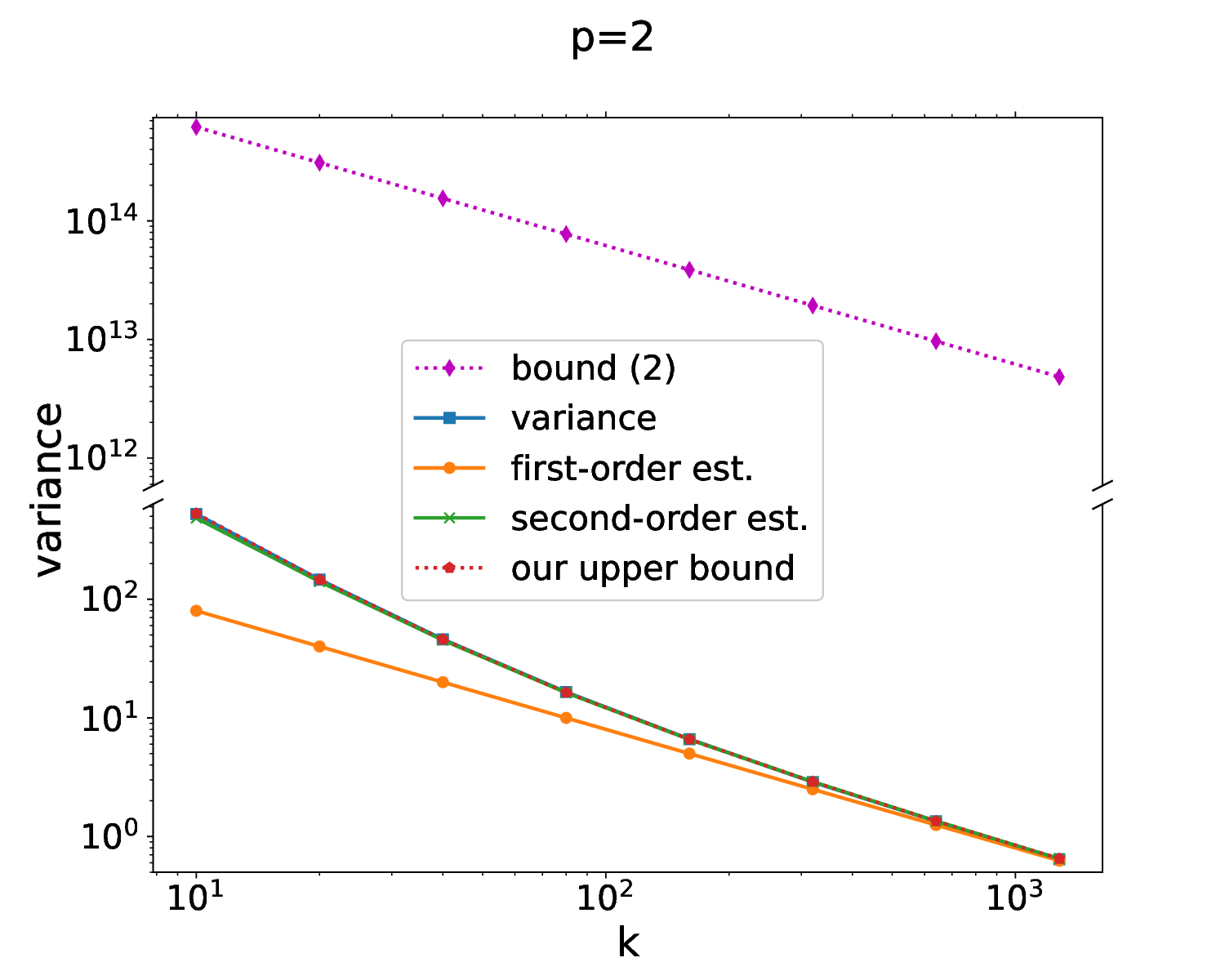}\includegraphics[scale=.2]{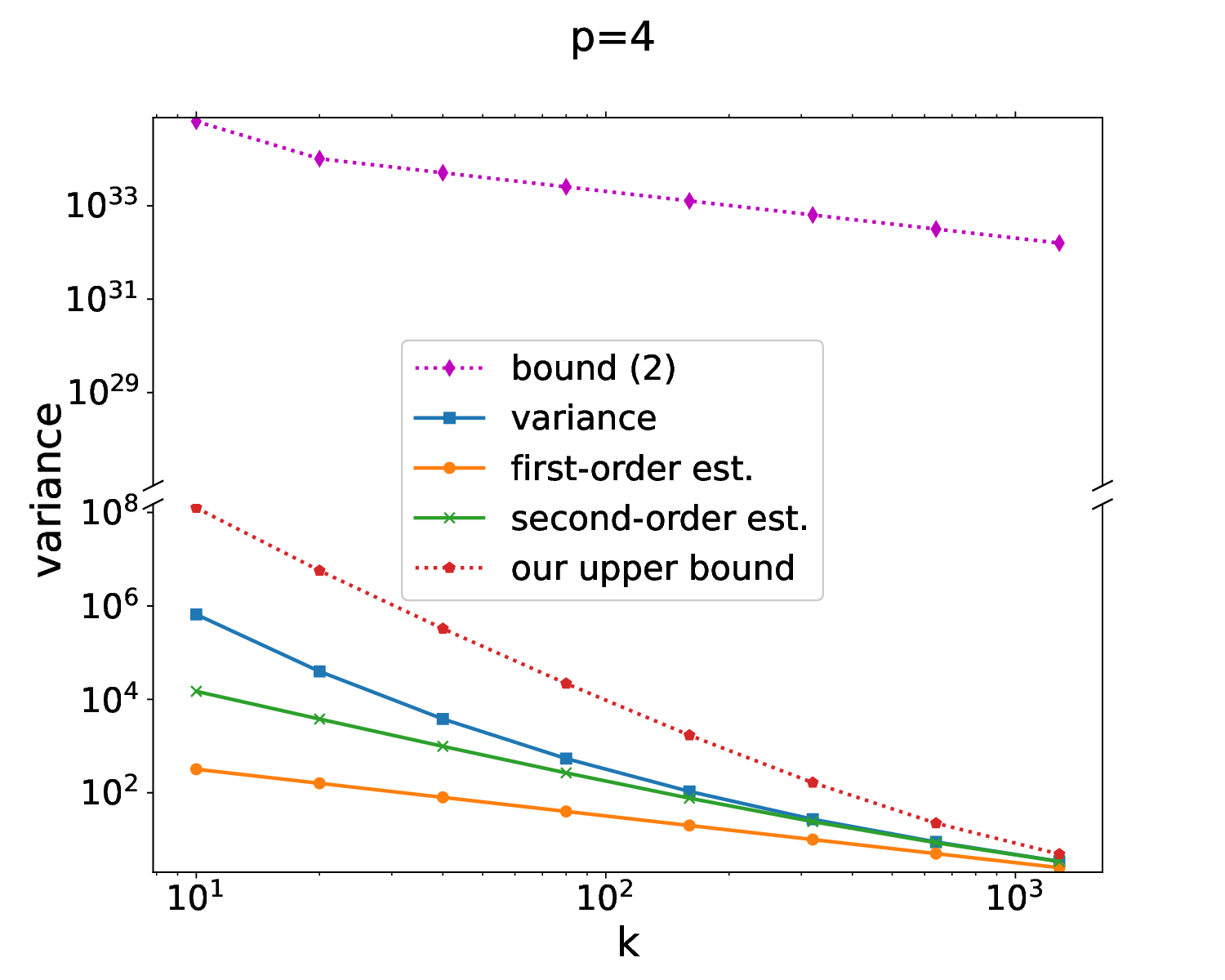}\includegraphics[scale=.2]{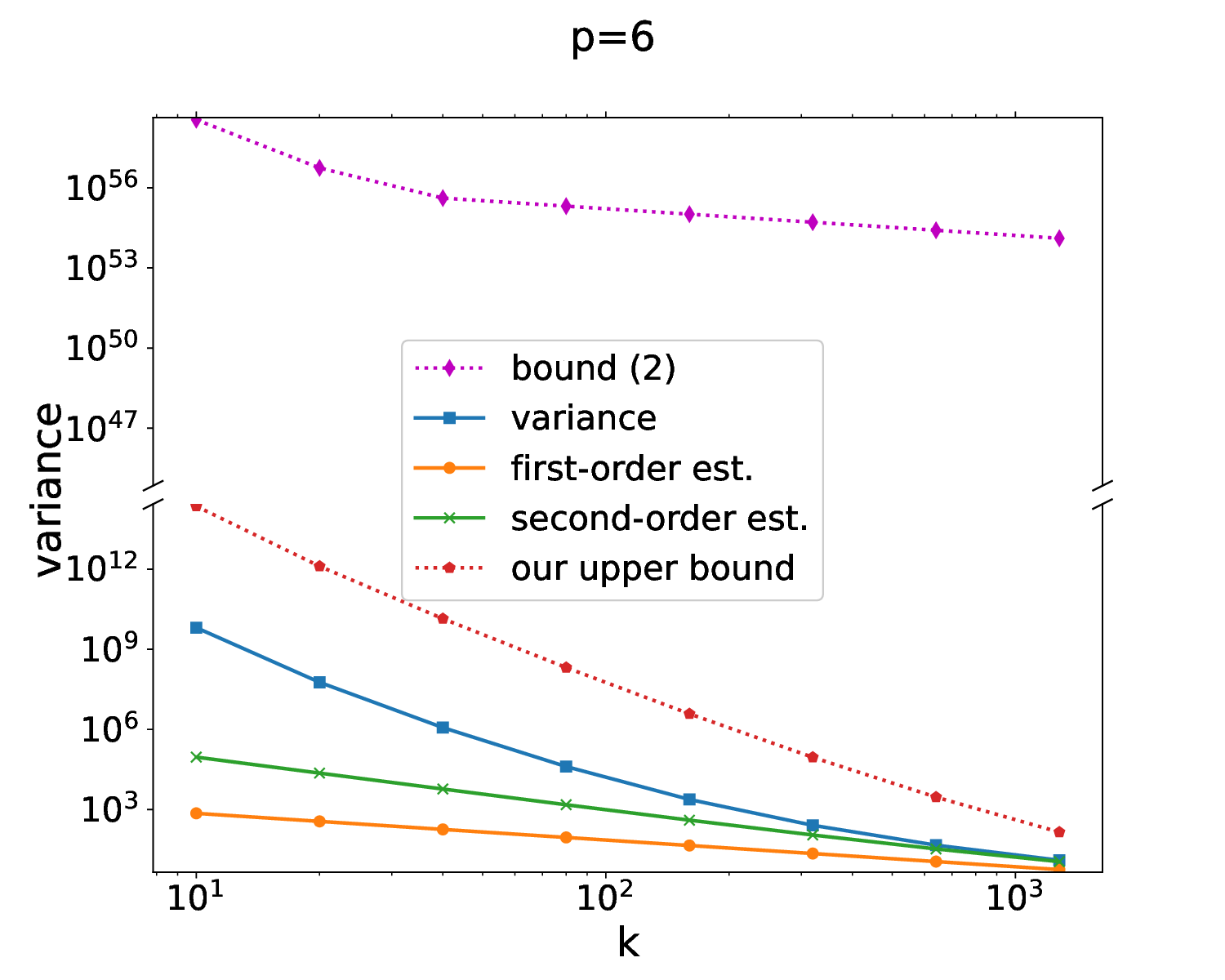}
% \hspace{-17mm}\includegraphics[scale=.45]{identity.eps}
\caption{Comparison of the variance of $\KV$, our bounds, our estimates, and \eqref{eq:varKV-original} for the identity matrix.}
\label{fig:identity}
\end{figure}
\end{example}

\begin{example}\label{ex:varyingn}
    \rev{We now consider a sequence of matrices of increasing size and same spectral density, and investigate the behavior of our bound and estimates. More precisely, we consider the diagonal matrix $A_1 \in \mathbb{R}^{10 \times 10}$ with diagonal elements $1, 1/2^4, \ldots, 1/10^4$, and the matrices $A_t = \begin{bmatrix} A_{t-1} & 0 \\ 0 & A_{t-1}\end{bmatrix}$ of increasing dimension, for $t=2,\ldots,10$, so that $A_t$ has size $10\cdot 2^{t-1} \times 10\cdot 2^{t-1}$. We fix $p = 5$ and, for each matrix, we plot in Figure~\ref{fig:varyingn} the variance, the first-order and second-order estimates, our upper bound, and the bound~\eqref{eq:varKV-original}, all of them divided by $\|A\|_{10}^{20}$. The number of samples is $k=100$. Note that the numerical rank increases linearly with the size of the matrix; this explains why our bounds and estimates are better for the matrices of smaller size.}

\begin{figure}[htb]
\centering
\includegraphics[scale=.3]{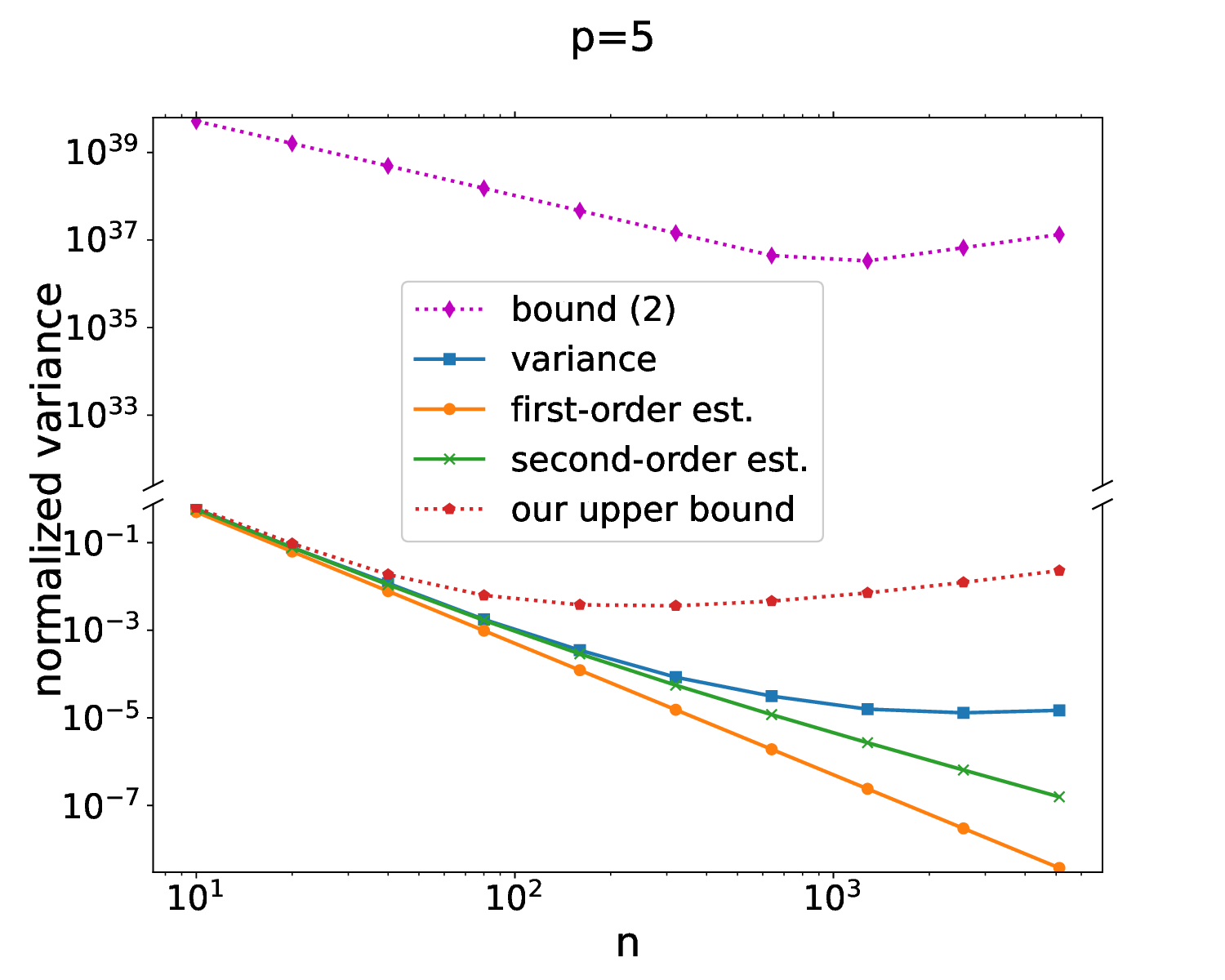}
\caption{Comparison of the variance of $\KV$, our bounds, our estimates, and \eqref{eq:varKV-original} for the matrices from Example \ref{ex:varyingn}.}
\label{fig:varyingn}
\end{figure}
\end{example}

\section{Discussion}\label{sec:conclusion}

In this work, we analyzed the variance of $\KV$ given by \Cref{alg:KV}, which is an unbiased estimator for the $2p$-th power of the Schatten-$2p$ norm of a matrix $A$, assuming only a sketch $Y = A \Omega$ with a Gaussian matrix $\Omega$ is available. We provided a new bound -- \Cref{thm:secondorderbound} -- that improves the bound \eqref{eq:varKV-original} from \cite{kong2017spectrum} for any matrix. Moreover, we focused on the matrices that have strong singular value decay, and proposed a first-order and second-order estimate on the variance. Numerically, we observe that these estimates are rather precise for moderate values of $p$ and $k$. For matrices with no singular value decay, the bound holds but the estimates do not really describe the behavior of $\KV$ since they are too optimistic. \rev{In all cases, our upper bound on the variance was far tighter than the bound~\eqref{eq:varKV-original}. After our paper was submitted, a preprint addressing the same problem of computing or approximating the variance of $\KV$ was posted to arXiv~\cite{Horesh2024}; the upper bound in~\cite{Horesh2024}, however, explicitly depends on the size of the matrix, and is not tailored to the case of numerically low-rank matrices.}

The \rev{sketching model} we considered is applicable in scenarios where we need to extract information on the moments of a covariance matrix of a multivariate Gaussian distribution \rev{for which only some samples drawn from the distribution are available.} This is the original setting considered in \cite{kong2017spectrum}. \Cref{alg:KV} was also suggested as a cheap Schatten-$2p$ norm estimator in the randomized numerical linear algebra review papers \cite{Martinsson2020,Murray2023}. We remark that, if the matrix $A$ can be accessed more than once via matrix-vector product, one should use other techniques to estimate Schatten norms; for instance, the ones based on Hutchinson trace estimator since they have smaller variance.

\paragraph{Acknowledgments} \rev{The authors thank the anonymous referees for providing helpful comments that improved the quality of this work. }

\bibliographystyle{abbrv}
\bibliography{bib}

\end{document}